\documentclass[11pt]{amsart}

\usepackage[margin=0.9in]{geometry}
\usepackage[utf8]{inputenc}
\usepackage[T1]{fontenc}
\linespread{1}

\usepackage{subfiles}
\usepackage{comment}
\usepackage{float}
\usepackage{marginnote}
\usepackage{euscript}
\usepackage[dvipsnames]{xcolor}
\usepackage{graphicx}
\usepackage{amssymb}
\usepackage{mathrsfs}
\usepackage{amsthm}
\usepackage{amsmath}
\usepackage{mathtools}
\usepackage[cal=cm]{mathalfa}
\usepackage{stmaryrd}
\usepackage{upgreek}
\usepackage{bbm}
\usepackage[breaklinks, hypertexnames=false, hyperfootnotes=false, colorlinks=true,linktocpage=true, allcolors=highlight]{hyperref}
\usepackage{cleveref}
\usepackage{url}
\usepackage{float}
\usepackage{todonotes} 
\usepackage[full]{textcomp}

\usepackage{tikz,tikz-cd,tikz-3dplot}
\usepackage{pgfplots}
\usetikzlibrary{calc}
\usetikzlibrary{fadings}
\usetikzlibrary{decorations.pathmorphing}
\usetikzlibrary{decorations.pathreplacing}
\usetikzlibrary{patterns}
\usetikzlibrary{arrows,shadows,positioning, calc, decorations.markings,
	hobby,quotes,angles,decorations.pathreplacing,intersections,shapes}
\usepgflibrary{shapes.geometric}
\usetikzlibrary{fillbetween,backgrounds}

\usepackage{xcolor}

\definecolor{highlight}{HTML}{0b5394}
\definecolor{faint}{HTML}{666666}

\usepackage{anyfontsize}
\usepackage[lining]{libertine}%
\usepackage{courier}
\usepackage[T1]{fontenc}
\usepackage[utf8]{inputenc}
\usepackage{csquotes}
\makeatletter
\renewcommand*\libertine@figurestyle{LF}
\makeatother
\usepackage[libertine,libaltvw,liby]{newtxmath}
\makeatletter
\makeatother
\usepackage{microtype}

\usepackage{array}
\newcolumntype{H}{>{\setbox0=\hbox\bgroup}c<{\egroup}@{}}
\linespread{1}


\setlength{\parskip}{4pt}
\setlength{\parindent}{0pt}

\setcounter{tocdepth}{1}
\setcounter{secnumdepth}{5}

\makeatletter
\def\@tocline#1#2#3#4#5#6#7{\relax
	\ifnum #1>\c@tocdepth 
	\else
	\par \addpenalty\@secpenalty\addvspace{#2}%
	\begingroup \hyphenpenalty\@M
	\@ifempty{#4}{%
		\@tempdima\csname r@tocindent\number#1\endcsname\relax
	}{%
		\@tempdima#4\relax
	}%
	\parindent\z@ \leftskip#3\relax \advance\leftskip\@tempdima\relax
	\rightskip\@pnumwidth plus4em \parfillskip-\@pnumwidth
	#5\leavevmode\hskip-\@tempdima
	\ifcase #1
	\or\or \hskip 1em \or \hskip 2em \else \hskip 3em \fi%
	#6\nobreak\relax
	\dotfill\hbox to\@pnumwidth{\@tocpagenum{#7}}\par
	\nobreak
	\endgroup
	\fi}
\makeatother

\tikzset{
	commutative diagrams/.cd,
	arrow style=tikz,
	diagrams={>=stealth}
}
\tikzset{
	arrow/.pic={\path[tips,every arrow/.try,->,>=#1] (0,0) -- +(0,4pt);},
	pics/arrow/.default={triangle 90}
}
\tikzset{->-/.style={decoration={
			markings,
			mark=at position .6 with {\arrow{latex}}},postaction={decorate}}
}
\tikzset{
	c/.style={every coordinate/.try}
}
\usepackage{enumerate}
\usepackage{enumitem}
\setlist[enumerate,1]{%
	label=(\roman*)	,itemsep=0.3em
	}
\setlist[itemize]{itemsep=0.3em}

\newcommand{\define}[1]{\textbf{#1}}

\newcommand{\mc}{\mathcal}
\newcommand{\mr}{\mathrm}
\newcommand{\mf}{\mathfrak}
\newcommand{\msf}{\mathsf}
\newcommand{\mbb}{\mathbb}

\DeclareMathOperator{\Aut}{Aut}

\DeclareMathOperator*{\Tot}{Tot}

\newcommand{\vir}{\mathrm{virt}}
\newcommand{\simple}{\mathrm{sim}}
\newcommand{\rubber}{\sim}

\newcommand{\Gm}{\mathbb{G}_{\mathsf{m}}}

\newcommand{\gpfont}[1]{#1}

\newcommand{\Aaff}[1]{\mathbb{A}^{\! #1}}
\newcommand{\hhh}{\mathsf{H}}
\newcommand{\cstar}{*}
\newcommand{\kfield}{\mathbbm{k}}
\newcommand{\Speck}{\operatorname{Spec}\kfield}

\newcommand{\modulifont}[1]{#1}
\newcommand{\Mbar}{\overline{\modulifont{M}}}

\newcommand{\Nbar}{\overline{\modulifont{N}}}
\newcommand{\Fbar}{\overline{\modulifont{F}}}

\newcommand{\invariantfont}[1]{\mathsf{#1}}
\newcommand{\LGW}[3]{\invariantfont{LGW}_{#1}(#2)}
\newcommand{\OGW}[2]{\invariantfont{OGW}_{#1}(#2)}

\newcommand{\ri}{\mathrm{i}}
\newcommand{\re}{\mathrm{e}}
\newcommand{\rt}{\mathrm{t}}

\newcommand{\vertspace}{\hspace{0.2ex}|\hspace{0.2ex}}
\newcommand{\torX}{X}
\newcommand{\defS}{S^{\prime}}
\newcommand{\defDone}{D_1^{\prime}}

\newcommand{\qbinom}{\genfrac{[}{]}{0pt}{}}
\makeatletter 
\def\makeCal#1{%
	\expandafter\newcommand\csname c#1\endcsname{\mathcal{#1}}}

\def\makeBB#1{%
	\expandafter\newcommand\csname b#1\endcsname{\mathbb{#1}}}

\def\makeFrak#1{%
	\expandafter\newcommand\csname f#1\endcsname{\mathfrak{#1}}}

\def\makeScr#1{%
	\expandafter\newcommand\csname s#1\endcsname{\mathscr{#1}}}

\def\makeSF#1{%
	\expandafter\newcommand\csname sf#1\endcsname{\mathsf{#1}}}

\count@=0
\loop
\advance\count@ 1
\edef\y{\@Alph\count@} 
\expandafter\makeCal\y
\expandafter\makeBB\y
\expandafter\makeFrak\y
\expandafter\makeScr\y
\expandafter\makeSF\y
\ifnum\count@<26
\repeat


\theoremstyle{plain}
\newtheorem{thm}{Theorem}[section]
\newtheorem{lem}[thm]{Lemma}

\newtheorem{prop}[thm]{Proposition}
\newtheorem{conj}[thm]{Conjecture}

\newtheorem*{conj*}{Conjecture}

\newtheorem*{cor*}{Corollary}

\theoremstyle{definition}
\newtheorem{defn}[thm]{Definition}
\newtheorem{rmk}[thm]{Remark}
\newtheorem{notn}[thm]{Notation}

\newtheorem{construction}[thm]{Construction}

\theoremstyle{plain}

\newenvironment{customthm}[1]
{\innercustomthm}
{\endinnercustomthm}

\theoremstyle{plain}

\newenvironment{customcor}[1]
{\innercustomcor}
{\endinnercustomcor}

\theoremstyle{plain}

\newenvironment{customconj}[1]
{\innercustomconj}
{\endinnercustomconj}

\theoremstyle{definition}

\newenvironment{customassumption}[1]
{\innercustomassumption}
{\endinnercustomassumption}

\theoremstyle{plain}

\newenvironment{practicalresult}[1]
{\innerpracticalresult}
{\endinnerpracticalresult}


\crefname{equation}{Eq.}{Eqs.}
\crefname{eqnarray}{Eq.}{Eqs.}
\crefname{algo}{algorithm}{algorithms}
\crefname{conj}{conjecture}{conjectures}
\crefname{lem}{lemma}{lemmas}
\crefname{thm}{theorem}{theorems}
\crefname{claim}{claim}{claims}
\crefname{rmk}{remark}{remarks}
\crefname{prop}{proposition}{propositions}
\crefname{section}{section}{sections}
\crefname{appendix}{appendix}{appendices}
\crefname{cor}{corollary}{corollaries}
\crefname{figure}{figure}{figures}
\crefname{table}{table}{tables}
\crefname{example}{example}{examples}
\crefname{prob}{problem}{problems}
\crefname{assm}{assumption}{assumptions}
\crefname{defn}{definition}{definitions}
\crefname{speculation}{speculation}{speculations}
\crefname{construction}{construction}{constructions}
\crefname{innercustomthm}{theorem}{theorems}
\crefname{innercustomconj}{conjecture}{conjectures}
\crefname{innercustomassumption}{assumption}{Assumption}
\crefname{innerpracticalresult}{practical result}{practical results}

\usepackage[
		backend=biber,
		style=alphabetic,
		natbib=true,
		url=false,
		doi=false,
		eprint=true,
		isbn=false,
		maxcitenames=5,
		maxbibnames=4,
		maxalphanames=4,
		useprefix=true
	]{biblatex}

\addbibresource{refs.bib}

\renewbibmacro{in:}{%
	\ifboolexpr{%
		test {\ifentrytype{article}}%
		or
		test {\ifentrytype{inproceedings}}%
	}{}{\printtext{\bibstring{in}\intitlepunct}}%
}

\DeclareFieldFormat[article,inbook,incollection,inproceedings,patent,thesis,unpublished,techreport,misc,book]{title}{\mkbibquote{#1}}


\title{The log-open correspondence for two-component Looijenga pairs}
\author{Yannik Schuler}

\address{University of Sheffield, School of Mathematics and Statistics, Hounsfield Road,  Sheffield S3 7RH, United Kingdom.}
\address{University of Cambridge, Department of Pure Mathematics and Mathematical Statistics, Wilberforce Road, Cambridge CB3 0WB, United Kingdom}
\email{ys667@cam.ac.uk}

\begin{document}

\begin{abstract}
	A two-component Looijenga pair is a rational smooth projective surface with an anticanonical divisor consisting of two transversally intersecting curves. We establish an all-genus correspondence between the logarithmic Gromov--Witten theory of a two-component Looijenga pair and open Gromov--Witten theory of a toric Calabi--Yau threefold geometrically engineered from the surface geometry. This settles a conjecture of Bousseau, Brini and van Garrel in the case of two boundary components. We also explain how the correspondence implies BPS integrality for the logarithmic invariants and provides a new means for computing them via the topological vertex method.
\end{abstract}

\maketitle
\vspace*{-2em}
\setcounter{tocdepth}{1}
\hypersetup{bookmarksdepth = 3}
\tableofcontents

\vspace*{-2em}
\section*{Introduction}
In \cite{BBvG2} Bousseau, Brini and van Garrel discovered a surprising relationship between two at first sight quite different curve counting theories:
\begin{itemize}
	\item \hyperref[sec: log GW setup]{\textcolor{black}{\textbf{Logarithmic Gromov--Witten theory}}} of a Looijenga pair
	\begin{equation*}
		(S  \vertspace  D_1+\ldots +D_l)
	\end{equation*}
	which is a rational smooth projective surfaces $S$ together with an anticanonical singular nodal curve $D_1+\ldots + D_l$. To this datum one can associate a Gromov--Witten invariant
	\begin{equation*}
		\LGW{g,\mathbf{\hat{c}},\upbeta}{S  \vertspace  D_1+\ldots +D_l}{(-1)^g \uplambda_g \Pi_{i=1}^{l-1} \mr{ev}_i^{\cstar} (\mr{pt})} \in \bQ
	\end{equation*}
	enumerating genus $g$, class $\upbeta$ stable logarithmic maps to $(S  \vertspace  D_1+\ldots +D_l)$ with maximum tangency along each irreducible component $D_i$. Additionally, there are $l-1$ interior markings with a point condition and an insertion of the top Chern class of the Hodge bundle. The tangency order of the markings along $D_1,\ldots,D_l$ is recorded in a matrix $\mathbf{\hat{c}}$.

	\vspace{0.3em}

	\item \hyperref[sec: open GW setup]{\textcolor{black}{\textbf{Open Gromov--Witten theory}}} of a toric triple
	\begin{equation*}
		\big(X,(L_i,\msf{f}_i)_{i=1}^k\big)
	\end{equation*}
	consisting of the toric Calabi--Yau threefold $X$ and a collection of framed Aganagic--Vafa Lagrangian submanifolds $(L_i,\msf{f}_i)$. To this data one can associate open Gromov--Witten invariants
	\begin{equation*}
		\OGW{g,((w_1), \ldots, (w_k)),\upbeta'}{X,(L_i,\msf{f}_i)_{i=1}^k} \in \bQ
	\end{equation*}
	enumerating genus $g$, class $\upbeta'$ stable maps to $X$ from Riemann surfaces with $k$ boundaries, each wrapping one of the Lagrangian submanifolds $L_i$ exactly $w_i$ times.
\end{itemize}

It was conjectured in \cite{BBvG2} that starting from a Looijenga pair one can geometrically engineer a toric triple so that the above two curve counts turn out to be essentially equal.

\begin{customconj}{A}
	\label{conj: BBvG log-open}
	\cite[Conjecture~1.3]{BBvG2} Let $(S  \vertspace  D_1+\ldots +D_l)$ be a Looijenga pair and $\upbeta$ an effective curve class in $S$ satisfying some technical conditions. Then there exists a toric triple $(X,(L_i,\mathsf{f}_i)_{i=1}^{l-1})$ and an effective curve class $\beta'$ in $X$ such that
	\begin{equation}
		\label{eq: BBvG log-open}
			\sum_{g\geq 0} \hbar^{2g-2} ~ \OGW{g,\mathbf{c},\upbeta'}{X, (L_i,\msf{f}_{i})_{i=1}^{l-1}} = \bigg(\prod_{i=1}^{l-1} \frac{(-1)^{D_i \cdot \upbeta}}{D_i \cdot \upbeta} \bigg) \, \frac{(-1)^{D_l \cdot \upbeta +1}}{2 \sin \tfrac{(D_l \cdot \upbeta)\hbar}{2}} ~ \sum_{g\geq 0} \hbar^{2g-1} ~ \LGW{g,\mathbf{\hat{c}},\upbeta}{S  \vertspace  D_1+\ldots+D_l}{(-1)^g \uplambda_g \, \Pi_{i=1}^{l-1} \mr{ev}_i(\mr{pt}) }
	\end{equation}
	where $\mathbf{c}$ is obtained from the contact datum $\mathbf{\hat{c}}$ by deleting the $l-1$ interior markings and the marking with tangency along $D_l$.
\end{customconj}

For $l=1$ the right-hand side of \eqref{eq: BBvG log-open} reduces to the generating series of ordinary Gromov–Witten invariants of $X$ in which case the conjecture has already been proven in \cite[Theorem D]{GNS23:bicyclic}. In this paper we investigate the case of two-component Looijenga pairs $(S  \vertspace  D_1+D_2)$ and as another application of the main result in \cite{GNS23:bicyclic} we will establish \Cref{conj: BBvG log-open} for these geometries.


\subsection{Logarithmic-Open Correspondence}
\label{sec: intro log-open}
Consider a two-component Looijenga pair $(S \vertspace D_1+D_2)$ together with an effective curve class $\upbeta$ in $S$. We will impose
\begin{customassumption}{$\star$}
	\label{assumption: star}
	$(S \vertspace D_1+D_2)$ and $\upbeta$ satisfy
	\begin{itemize}
		\item $D_i \cdot \upbeta >0$, $i\in\{1,2\}$,
		\item $D_2 \cdot D_2\geq 0$,
		\item $(S\vertspace D_1)$ deforms into a pair $(\defS \vertspace \defDone)$ with $\defS$ a smooth projective toric surface and $\defDone$ a toric hypersurface.\footnote{We expand on the last condition in \Cref{rmk: explanation deformation property}.}
	\end{itemize}
\end{customassumption}
Moreover, we denote by $\mathbf{\hat{c}}$ the following contact datum along $D_1+D_2$
\begin{equation*}
	\mathbf{\hat{c}} = \bigg(\raisebox{-0.7ex}{\begin{minipage}{13.2em}$\hspace{0.2em}\begin{matrix}
				0 & \cdots & 0 & c_1 & \mathclap{\overbrace{\makebox[4.5em]{$\cdots$}}^{= \, \mathbf{c}}} & c_n & 0\\
				0 & \mathclap{\underbrace{\makebox[4.5em]{$\cdots$}}_{\text{$m$ times}}} & 0 & 0 & \cdots & 0 & D_2 \cdot \upbeta
			\end{matrix}$\end{minipage}}\bigg)
\end{equation*}
where we assume $n\geq m$ and $c_i>0$ for all $i\in \{1,\ldots,n\}$. This means there are $m$ interior markings, $n$ markings tangent to $D_1$ and an additional marking with maximum tangency along $D_2$. We set
\begin{equation}
	\label{eq: intro log GW invariant}
	\LGW{g,\mathbf{\hat{c}},\upbeta}{S \vertspace D_1+D_2}{ (-1)^g \uplambda_g \, \Pi_i \mr{ev}_{i=1}^n(\mr{pt})} \coloneqq \int_{[\Mbar_{g,\mathbf{\hat{c}},\upbeta}(S \vertspace D_1 + D_2)]^{\vir}} (-1)^g \uplambda_g \, \prod_{i=1}^{m} \mr{ev}_i^{\cstar}\big[\mr{pt}_S\big] ~ \prod_{j=m+1}^{n} \mr{ev}_j^{\cstar}\big[\mr{pt}_{D_1}\big]\,.
\end{equation}
Especially, note that we impose a point condition at all $m$ interior markings. Our main result is the following logarithmic to open correspondence.

\begin{customthm}{B} \label{thm: intro log-open} (\Cref{thm: log open})
	Assuming \labelcref{assumption: star} there is a toric triple $(X,L,\mathsf{f})$ (see \Cref{constr: toric triple}) and a curve class $\upbeta^{\prime}$ in $X$ satisfying
	\begin{equation}
		\label{eq:log-open intro}\sum_{g\geq 0} \hbar^{2g-2} ~ \OGW{g,\mathbf{c},\upbeta^{\prime}}{X,L,\msf{f}} = \frac{(-1)^{D_1 \cdot \upbeta}}{m! \, \textstyle{\prod_{i=0}^{m-1}} c_{n-i}} \, \frac{(-1)^{D_2 \cdot \upbeta +1}}{2 \sin \tfrac{(D_2 \cdot \upbeta)\hbar}{2}} ~ \sum_{g\geq 0} \hbar^{2g-1} ~ \LGW{g,\mathbf{\hat{c}},\upbeta}{S \vertspace D_1+D_2}{(-1)^g \uplambda_g \,\Pi_{i=1}^{m} \mr{ev}_i^{\cstar}\big[\mr{pt}_S\big] ~ \Pi_{j=m+1}^{n} \mr{ev}_j^{\cstar}\big[\mr{pt}_{D_1}\big]} \,.
	\end{equation}
\end{customthm}

The specialisation of the above correspondence to the case $m=n=1$ immediately gives

\begin{customcor}{C}
	\label{cor: C}
	\Cref{conj: BBvG log-open} holds for all two-component Looijenga pairs ($l=2$) satisfying \Cref{assumption: star}.
\end{customcor}

\subsection{Topological vertex}
Computing Gromov--Witten invariants of logarithmic Calabi--Yau surfaces is usually a rather tedious endeavour. In higher genus with the presence of a $\uplambda_g$ insertion the only general tools available are scattering diagrams and tropical correspondence theorems \cite{Bou19:TropRefCurveCounting,Bou20:QuTropVert,Bou21:RefinedFloorDiags,KSK23:TropRefCurveCountDesc}.

As an application of \Cref{thm: intro log-open} we obtain a new --- highly efficient --- means for computing logarithmic Gromov Witten invariants of Looijenga pairs. Since open Gromov--Witten invariants can be computed using the topological vertex \cite{AKMV05:TopVert,LLLZ09:MathTopVert}, the same method can be applied to determine the opposite side of our correspondence theorem.

\begin{practicalresult}{D} (\Cref{sec: Top Vert}) \label{result: Top Vert}
	The logarithmic Gromov--Witten invariants \eqref{eq: intro log GW invariant} of a two-component Looijenga pair satisfying \Cref{assumption: star} are computed by the topological vertex.
\end{practicalresult}

\subsection{BPS integrality}
In general, Gromov--Witten invariants are rational numbers. However, in many cases they are expected to exhibit underlying integral BPS type invariants. For open Gromov--Witten invariants this behaviour was first observed by Labastida--Mari\~{n}o--Ooguri--Vafa \cite{OV00:KnotInvTopStr,LM01:PolyInvTorKnotTopStr,LMV00:KnotsLinksBranes,MV02:FramedKnotsLargeN}, was studied extensively in explicit examples by Luo--Zhu \cite{LZ19:LMOVunknot,Zhu19:TopStrQUivVarRRid,Zhu22:IntTopStrQuTwoFct} and got recently proven by Yu \cite{Yu23:BPS} for general toric targets. Hence, as a corollary of our main result \mbox{\Cref{thm: intro log-open}} we find that logarithmic invariants feature the same property.

\begin{customthm}{E} (\Cref{thm: BPS integrality}) \label{thm: intro BPS integrality}
	The logarithmic Gromov--Witten invariants \eqref{eq: intro log GW invariant} of a two-component Looijenga pair satisfying \Cref{assumption: star} exhibit BPS integrality.
\end{customthm}

In \Cref{sec: BPS integrality} we spell this statement out in more detail. We conjecture that the above observation holds without imposing \Cref{assumption: star} as well (\Cref{conj: BPS integrality}).

\subsection{Strategy}
The proof of \Cref{thm: intro log-open} splits into several steps which partly have already been carried out elsewhere:
\begin{equation*}
	\begin{tikzcd}
		\fbox{$\big(S \vertspace  D_1+D_2\big)$} \ar[rr,leftrightarrow,"\text{\cite{GNS23:bicyclic}}"] & & \fbox{$\big(\cO_S(-D_2) \vertspace  D_1\big)$} \ar[out=240,in=300,loop,looseness=4,swap,"\text{\Cref{sec: reduction}}"] \ar[rr,leftrightarrow, "\text{\Cref{sec: log 3fold to open proof}}"] & & \fbox{$\cO_S(-D_2)\lvert_{S \setminus D_1}\phantom{\big|\hspace{-0.8ex}}$} \ar[rr,leftrightarrow, "\text{\cite{FL13:OpenGWtorCY3}}"] & & \fbox{$\big(X,L,\mathsf{f}\big)$} \\
	\end{tikzcd}
\end{equation*}
In \cite{GNS23:bicyclic} van Garrel, Nabijou and the author prove a comparison statement between the Gromov--Witten theory of $(S \vertspace D_1+D_2)$ and $(\cO_S(-D_2)  \vertspace  D_1)$. Together with a result of Fang and Liu \cite{FL13:OpenGWtorCY3} expressing the open Gromov--Witten invariants of a toric triple $(X,L,\mathsf{f})$ as descendant invariants of $X$ it is therefore sufficient to prove a formula for relative Gromov--Witten invariants of $(\cO_S(-D_2)  \vertspace  D_1)$ in terms of descendant invariants of $X=\Tot\cO_S(-D_2)|_{S \setminus D_1}$ in order to establish \Cref{thm: intro log-open}. We will prove such a relation in \Cref{sec: log 3fold to open proof} in case there are no interior markings ($m=0$). Later in \Cref{sec: reduction} we will reduce the general case ($m\geq 0$) to the one proven earlier.

\subsection{Context \& Prospects}

\subsubsection*{Stable maps to Looijenga pairs}
\Cref{conj: BBvG log-open} was formulated by Bousseau, Brini and van Garrel in \cite{BBvG2} motivated by a direct calculation and comparison of both sides of the correspondence. More precisely, in loc.~cit.~\Cref{conj: BBvG log-open} was established for all so called tame Looijenga pairs (up to a possible relabelling of boundary components) and later in \cite{Kra21:qBinomProof,BS23:Quasitame} the list of examples was extended to encompass all quasi-tame pairs as well. When $l=2$ a Looijenga pair $(S\vertspace D_1 + D_2)$ is called tame if $D_i\cdot D_i>0$, $i\in\{1,2\}$, and quasi-tame if $\Tot \cO_{S}(-D_1) \oplus \cO_{S}(-D_2)$ deforms to the total space of line bundles associated to a tame pair. It turns out there is only a finite number of such quasi-tame Looijenga pairs \cite[Section~2.4]{BBvG2} and for $l=2$ one can check that apart from $\bF_0(2,2)$ all of them satisfy \Cref{assumption: star}.\footnote{Indeed, examining the list of all two-component quasi-tame Looijenga pairs \cite[Table 1]{BBvG2} one observes that all of them satisfy the second property of \Cref{assumption: star}. The third property is more subtle but we see that apart from $\bF_0(2,2)$, $\mr{dP}_3(1,1)$ and $\mr{dP}_3(0,2)$ one can indeed equip the surface with a $\Gm^2$-action preserving $D_1$. In \Cref{sec:dP3 0 2} we explain that $\mr{dP}_3(0,2)$ also satisfies the third property of \Cref{assumption: star} by deforming the centre of the blowup. The pair $\mr{dP}_3(1,1)$ can be treated similarly. However, regarding the remaining case $\bF_0(2,2)$ the author does not know if and how the geometry can be treated using the methods developed in this paper.} Thus, regarding two-component Looijenga pairs \Cref{cor: C} covers all cases previously known in the literature with the exception of $\bF_0(2,2)$. Among these is the quasi-tame pair $\mr{dP}_3(0,2)$. For this geometry an explicit formula for its all-genus generating series of logarithmic Gromov--Witten invariants was found in \cite{BS23:Quasitame} by proving an intricate identity of $q$-hypergeometric functions. As an application of our main result we are going to reprove this formula in \Cref{sec:dP3 0 2} by using the topological vertex.



There is certainly the question whether the techniques that went into the proof of \Cref{thm: intro log-open} can be generalised to cover Looijenga pairs with $l>2$ boundary components as well. The most difficult part in this endeavour will most likely be to find an appropriate generalisation of \cite[Theorem A]{GNS23:bicyclic}. Moreover, in the case $l\geq 3$ the construction of the toric triple proposed in \cite[Example 6.5]{BBvG2} involves a flop which numerically does the right job but needs to be understood geometrically better first in order to generalise the approach of this paper.

\subsubsection*{Open/Closed duality of Liu--Yu}
Liu and Yu \cite{LY22:OpenClosed,LY22:OpenClosedOrbi} prove a correspondence between genus zero open Gromov--Witten invariants of a toric threefold and local Gromov--Witten invariants of some Calabi--Yau fourfold constructed from the threefold. If we combine \Cref{thm: intro log-open} with \cite[Theorem B]{GNS23:bicyclic} we obtain a similar relation between open/local Gromov--Witten invariants of
\begin{equation}
	\label{eq: open cloed comparison}
	\begin{tikzcd}
		\fbox{$(X,L,\msf{f})$} \ar[rr,leftrightarrow] & & \fbox{$\cO_S(-D_1) \oplus \cO_S(-D_2) $}\,.
	\end{tikzcd}
\end{equation}
It is a combinatorial exercise to check that starting from the toric triple $(X,L,\msf{f})$ as we define in \Cref{constr: toric triple} the fourfold constructed in \cite[Section~2.4]{LY22:OpenClosedOrbi} is indeed deformation equivalent (in the sense of \Cref{rmk: explanation deformation property}) to the right-hand side of \eqref{eq: open cloed comparison}.\footnote{The key observation is that the toric divisor $D$ we are deleting in \Cref{constr: toric triple} is reinserted in Liu--Yu's construction \cite[Section~2.4]{LY22:OpenClosedOrbi} of the fourfold as the divisor associated to the ray $\widetilde{b}_{R+1} \bR_{\geq 0}$.} It should, however, be stressed that the correspondence of Liu and Yu holds in a more general context than just local surfaces.

\subsubsection*{BPS invariants}
Motivated by \Cref{thm: intro BPS integrality} we conjecture BPS integrality for Gromov--Witten invariants of type \eqref{eq: intro log GW invariant} for two-component logarithmic Calabi--Yau surfaces not necessarily satisfying \Cref{assumption: star} (\Cref{conj: BPS integrality}). This prediction overlaps with a conjecture of Bousseau \cite[Conjecture 8.3]{Bou20:QuTropVert} in special cases. So one may wonder about a simultaneous generalisation of both conjectures. Especially, Bousseau's conjecture also covers logarithmic Calabi--Yau surfaces with more than two components. A geometric proof of \Cref{conj: BBvG log-open} in the case of $l>2$ components in combination with LMOV integrality \cite{Yu23:BPS} might allow progress in this direction.

\subsection*{Acknowledgements}
First and foremost I would like to thank my supervisor Andrea Brini for suggesting this project, innumerous discussions and his continuous support. Special thanks are also due to Cristina Manolache for clarifying discussions concerning key steps in the proof of our main result. I also benefited from discussions with Alberto Cobos R\'{a}bano, Michel van Garrel, Samuel Johnston, Navid Nabijou, Dhruv Ranganathan, Ajith Urundolil Kumaran and Song Yu which I am highly grateful for.

Parts of this work have been carried out during research visits at the University of Cambridge. I thank the university and St John's College for hospitality and financial support.

\section{Statement of the main result}

\subsection{Logarithmic Gromov--Witten theory}
\label{sec: log GW setup}
\begin{defn}
	\label{defn: l comp Looijenga pair}
	An \define{$l$-component Looijenga pair} $(S \vertspace D_1 + \ldots +D_l)$ is a rational smooth projective surface $S$ together with an anticanonical singular nodal curve $D_1 + \ldots + D_l$ with $l$ irreducible components $D_1,\ldots,D_l$.
\end{defn}
Given a two-component Looijenga pair $(S \vertspace D_1 + D_2)$, we denote by
\begin{equation*}
	\Mbar_{g,\mathbf{\hat{c}},\upbeta}(S \vertspace D_1 + D_2)
\end{equation*}
the moduli stack of genus $g$, class $\upbeta$ stable logarithmic maps to $(S \vertspace D_1 + D_2)$ with markings whose tangency along $D_1 + D_2$ is encoded in $\mathbf{\hat{c}}$ \cite{Chen14:StabLogMapsDFpairs,AC14:StabLogMapsDFpairsII,GS13:LogGWInv}. We fix this tangency data to be
\begin{equation*}
	\mathbf{\hat{c}} = \bigg(\raisebox{-0.9em}{\begin{minipage}{13.2em}$\hspace{0.2em}\begin{matrix}
				0 & \cdots & 0 & c_1 & \cdots & c_n & 0\\
				0 & \mathclap{\underbrace{\makebox[4.5em]{$\cdots$}}_{\text{$m$ times}}} & 0 & 0 & \cdots & 0 & D_2 \cdot \upbeta
			\end{matrix}$\end{minipage}}\bigg)
\end{equation*}
for some $c_1,\ldots,c_n> 0$ with $\sum_i c_i = D_1 \cdot \upbeta$. This means there are $m$ interior markings (no tangency), $n$ markings with tangency along $D_1$ and a marking having maximum tangency with $D_2$. The virtual dimension of the moduli stack is $g+m+n$ and so we define
\begin{equation*}
	\LGW{g,\mathbf{\hat{c}},\upbeta}{S \vertspace D_1+D_2}{(-1)^g \uplambda_g \, \upgamma} \coloneqq \int_{[\Mbar_{g,\mathbf{\hat{c}},\upbeta}(S \vertspace D_1 + D_2)]^{\vir}} (-1)^g \uplambda_g \, \prod_{i=1}^{m} \mr{ev}_i^{\cstar}\big[\mr{pt}_S\big] ~ \prod_{j=m+1}^{n} \mr{ev}_j^{\cstar}\big[\mr{pt}_{D_1}\big]
\end{equation*}
where $\uplambda_g$ is the top Chern class of the Hodge bundle.

The ultimate goal of this section is to relate the above invariants to the ones of an open subset of (a deformation of) the threefold $Y \coloneqq \Tot \cO_{S}(-D_2)$. Let us recall the main result of \cite{GNS23:bicyclic} which serves as a first step towards this direction. Writing $\uppi$ for the projection $Y\rightarrow S$, we set $D \coloneqq \uppi^{-1}( D_1)$ and define
\begin{equation}
	\label{eq: def log GW Y D}
	\LGW{g,\mathbf{\tilde{c}},\upbeta}{Y \vertspace D}{\upgamma} \coloneqq \int_{[\Mbar_{g,\mathbf{\tilde{c}},\upbeta}(Y \vertspace D)]^{\vir}} \prod_{i=1}^{m} \mr{ev}_i^{\cstar}\big(\uppi^{\cstar}\big[\mr{pt}_S\big]\big) ~ \prod_{j=m+1}^{n} \mr{ev}_j^{\cstar}\big(\uppi^{\cstar}\big[\mr{pt}_{D_1}\big]\big)
\end{equation}
where $\mathbf{\tilde{c}}$ is obtained from $\mathbf{\hat{c}}$ by deleting the marking with tangency along $D_2$.

\begin{thm}
	\label{thm: GNS main theorem}
	\cite[Theorem A]{GNS23:bicyclic} Suppose $D_2 \cdot D_2 \geq 0$ and $\upbeta$ is so that $D_1\cdot \upbeta \geq 0$ and  $D_2 \cdot \upbeta > 0$. Then
	\begin{equation}
		\label{eq: GNS main theorem}
		\sum_{g\geq 0} \hbar^{2g-2} ~\LGW{g,\mathbf{\tilde{c}},\upbeta}{Y \vertspace D}{\upgamma} = \frac{(-1)^{D_2 \cdot \upbeta+1}}{2 \sin \tfrac{(D_2 \cdot \upbeta)\hbar}{2}} \sum_{g\geq 0} \hbar^{2g-1} ~ \LGW{g,\mathbf{\hat{c}},\upbeta}{S \vertspace D_1+D_2}{(-1)^g \uplambda_g \, \upgamma}\,.
	\end{equation}
\end{thm}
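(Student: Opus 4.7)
My plan is to prove \Cref{thm: GNS main theorem} by a virtual $\cstar$-localisation argument for the fibrewise scaling action on $Y=\Tot\cO_S(-D_2)$, coupled with a rubber computation that extracts the trigonometric factor.

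First, let $\cstar$ act on $Y$ by scaling the fibres of $\uppi:Y\to S$. Since the log divisor $D=\uppi^{-1}(D_1)$ is pulled back from $S$ it is $\cstar$-invariant, so the action lifts to $\Mbar_{g,\mathbf{\tilde{c}},\upbeta}(Y\vertspace D)$ and log-virtual localisation applies. The fixed locus parametrises stable log maps factoring through the zero section $S\subset Y$. Although $D_2$ is not a logarithmic divisor of the ambient target $(Y\vertspace D)$, every such map automatically meets $D_2\subset S$ with total multiplicity $D_2\cdot\upbeta$, and the $\cstar$-weights on the virtual normal bundle force a refinement of the fixed locus by the intersection profile $\mu\vdash D_2\cdot\upbeta$ over $D_2$. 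Each resulting stratum enhances a log map to $(S\vertspace D_1+D_2)$ with contact partition $\mu$ along $D_2$ by a rubber piece in the fibre direction of $\cO_S(-D_2)\lvert_{D_2}$.

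Second, I would compute the moving part of the virtual normal bundle. It is the equivariant pushforward complex $R^\bullet\pi_* f^\cstar\cO_S(-D_2)$ twisted by the scaling weight $t$. Expanding its equivariant Euler class and applying Mumford's GRR relation for the Hodge bundle produces, in the non-equivariant limit, a factor of $(-1)^g\uplambda_g$ against evaluation classes at the new marking over $D_2$. This is precisely the mechanism behind the van Garrel--Graber--Ruddat log-local theorem, adapted here in the presence of the auxiliary log divisor $D_1$. The remaining rubber contribution over the $\cstar$-fixed point at infinity of $\cO_S(-D_2)\lvert_{D_2}$ evaluates, via a Mari\~no--Vafa type computation on $\bP^1$-rubber with two relative points, to the multiple-cover series
\begin{equation*}
	\frac{(-1)^{D_2\cdot\upbeta+1}}{2\sin((D_2\cdot\upbeta)\hbar/2)},
\end{equation*}
and it forces the surviving profile to be the single-part partition $\mu=(D_2\cdot\upbeta)$. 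This single marking is exactly the extra entry present in $\mathbf{\hat{c}}$ but absent from $\mathbf{\tilde{c}}$, so the underlying log moduli space is recognised as $\Mbar_{g,\mathbf{\hat{c}},\upbeta}(S\vertspace D_1+D_2)$.

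The main obstacle is the vanishing of the profiles $\mu\neq(D_2\cdot\upbeta)$ and the clean identification of the rubber integral: this is a $\uplambda_g$-vanishing statement in the logarithmic setting, requiring a careful analysis of the tropical moduli that arise from a degeneration to the normal cone of $D_2\subset S$, and a check that the rubber torus action on the bubble components kills higher-ramification contributions. The hypothesis $D_2\cdot D_2\geq 0$ enters here, ensuring that the relevant log rubber moduli are tame enough for the Abramovich--Chen--Gross--Siebert tropical formalism to control these contributions, while $D_2\cdot\upbeta>0$ guarantees that a genuine rubber stratum is present so that localisation produces the trigonometric denominator rather than a trivial equivariant factor. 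Once this vanishing is in place, assembling Steps 1--2 with the rubber evaluation yields the stated identity \eqref{eq: GNS main theorem}.
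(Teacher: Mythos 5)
This statement is not proved in the paper at all: it is imported verbatim as \cite[Theorem A]{GNS23:bicyclic}, so there is no internal argument to match your sketch against. Judged on its own terms, your proposal collects the right circle of ideas (a scaling-type localisation producing a twisted theory over maps to $S$, a degeneration to the normal cone of $D_2$, a vanishing statement killing non-maximal contact profiles, a rubber/multiple-cover evaluation producing the sine factor, and $\uplambda_g$ emerging from the obstruction bundle $R^\bullet\uppi_\cstar f^\cstar\cO_S(-D_2)$), but the way the steps are assembled contains genuine errors. The central one is your description of the $\Gm$-fixed locus of the fibre-scaling action on $\Mbar_{g,\mathbf{\tilde{c}},\upbeta}(Y \vertspace D)$: since $Y=\Tot\cO_S(-D_2)$ is the open total space, the fixed maps are exactly those factoring through the zero section, i.e.\ the fixed locus is a moduli of log maps to $(S\vertspace D_1)$ with no contact condition along $D_2$. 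It does \emph{not} decompose into components indexed by tangency profiles $\mu\vdash D_2\cdot\upbeta$ (that locus is only locally closed), and there is no ``rubber piece over the fixed point at infinity of $\cO_S(-D_2)\lvert_{D_2}$'' because $Y$ has no divisor at infinity in the fibre direction. The contact stratification along $D_2$, the rubber geometry $\bP(\cO_{D_2}\oplus N)$, and the multiple-cover factor can only appear after the separate degeneration (to the normal cone of $D_2$, or an equivalent device) applied to the twisted theory --- a step you relegate to a side remark about ``the main obstacle'' rather than building into the argument, and which is precisely where the hard work (the vanishing for $\mu\neq(D_2\cdot\upbeta)$ and for non-trivial bubble configurations) lives.

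A second concrete issue is the role you assign to the hypotheses. $D_2\cdot D_2\geq 0$ and $D_2\cdot\upbeta>0$ are not about ``tameness of log rubber moduli'' or ``existence of a rubber stratum''; as the paper itself notes, they guarantee that $\Mbar_{g,\mathbf{\tilde{c}},\upbeta}(Y\vertspace D)$ is proper (negativity of $f^\cstar\cO_S(-D_2)$ forcing stable maps into the zero section), without which the left-hand side of \eqref{eq: GNS main theorem} is not even defined and localisation cannot be applied; your sketch never addresses properness. Finally, the claim that ``Mumford's GRR relation'' turns the equivariant Euler class of the twisted pushforward into $(-1)^g\uplambda_g$ ``against evaluation classes at the new marking over $D_2$'' is only the right mechanism on the main component after the degeneration, where $f^\cstar\cO_S(-D_2)\cong\cO_C(-(D_2\cdot\upbeta)x)$; as stated, with no marking along $D_2$ present in the localisation on $Y$, it does not parse. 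So while the strategy is in the spirit of the proof given in \cite{GNS23:bicyclic} (which likewise runs a scaling localisation with a Graber--Vakil style simple/composite analysis and a vanishing of composite contributions), your write-up conflates the localisation and degeneration steps and misstates where the tangency data, the rubber, and the hypotheses enter; these gaps would have to be repaired before this could count as a proof.
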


\subsection{Open Gromov--Witten theory}
\label{sec: open GW setup}
Open Gromov--Witten invariants are a delicate topic. We will work in the framework of Fang and Liu \cite{FL13:OpenGWtorCY3} which builds on ideas of \cite{KL02:StabMapsBoundary,LS02:OpStrInstRelStabM}. We keep details to a minimum since at no point we will be working with open Gromov--Witten invariants directly. We refer the interested reader to \cite{FL13:OpenGWtorCY3,KL02:StabMapsBoundary} for more details.

\subsubsection{Preliminaries on toric Calabi--Yau threefolds}
\label{sec: open geom setup}
Let $\torX$ be a smooth toric Calabi--Yau threefold. We will write $\hat{\gpfont{T}}\cong \Gm^3$ for its dense torus and $\gpfont{T}\cong \Gm^2$ for its \define{Calabi--Yau torus}. The latter is defined as the kernel $\gpfont{T}=\ker \upchi$ of the character $\upchi$ associated to the induced $\hat{\gpfont{T}}$-action on $K_{\torX}\cong \cO_{\torX}$.

If $\torX$ can be realised as a symplectic quotient, which is the case when $\torX$ is semiprojective \cite{HS02:TorHKV}, Aganagic and Vafa \cite{AV00:MSDbranesHolDiscs} construct a certain class of Lagrangian submanifolds $L$ in $\torX$. Relevant for us here is the fact that these Lagrangian submanifolds are invariant under the action of the maximal compact subgroup $\gpfont{T}_{\mbb{R}} \hookrightarrow T$ and are homeomorphic to $\mbb{R}^{\!2} \times S^1$. Moreover, these Lagrangian submanifolds intersect a unique one dimensional torus orbit closure of $\torX$ in a circle $S^1$. For short we will call a Lagrangian submanifold $L$ of the type considered by Aganagic and Vafa an \define{outer brane} if it intersects a non-compact one dimensional torus orbit closure.

Let us write $\upmu$ for the moment map
\begin{equation*}
	\upmu : \torX \longrightarrow \mr{Lie}(\gpfont{T}_{\mbb{R}})^{\cstar}
\end{equation*}
and fix an isomorphism $\mr{Lie}(\gpfont{T}_{\mbb{R}})^{\cstar} \cong \mbb{R}^2$. We call the image of the union of all one dimensional torus orbit closures under $\upmu$ the \define{toric diagram} of $\torX$. It is an embedded trivalent graph since there are always three $\hat{\gpfont{T}}$-preserved one dimensional strata meeting in a torus fixed point (see \Cref{fig: example outer brane}). Now let $L$ be an outer brane in $\torX$. By our earlier discussion, its image $\upmu(L)$ under the moment must be a point on a non-compact edge $\ell$. Let us write $q$ for the unique torus fixed point contained in $\ell$ and denote by
\begin{equation*}
	\msf{u},\msf{v},\msf{w} \in \hhh^2_{\gpfont{T}}(\mr{pt},\mbb{Z}) \subset \mr{Lie}(\gpfont{T}_{\mbb{R}})^{\cstar}
\end{equation*}%
\begin{figure}[t]
	\centering
	\begin{tikzpicture}[smooth, baseline={([yshift=-.5ex]current bounding box.center)},scale=0.8]%
		\draw[thick] (0,0) to (1,1);
		\draw[thick] (0,0) to (-1.5,0);
		\draw[thick,dashed] (-1.5,0) to (-2,0);
		\draw[thick] (0,0) to (0,-1);
		\draw[thick,dashed] (0,-1) to (0,-1.5);
		\draw[thick] (1,1) to (2,1);
		\draw[thick,dashed] (2,1) to (2.5,1);
		\draw[thick] (1,1) to (1,2);
		\draw[thick,dashed] (1,2) to (1,2.5);
		\draw[->,very thick,purple] (0,0) to (0.7,0.7);
		\draw[->,very thick,purple] (0,0) to (-0.7,0);
		\draw[->,very thick,purple] (0,0) to (0,-0.7);
		\node at (0,0) {$\bullet$};
		\node at (1,1) {$\bullet$};
		\draw[->,very thick,purple] (-1.3,0) to (-1.3,-0.7);
		\node[color=highlight] at (-1.3,0) {$\bullet$};
		\node[color=highlight,above] at (-1.3,0) {$\upmu(L)$};
		\node[left,purple] at (-1.3,-0.35) {$\mathsf{f}$};
		\node[below,purple] at (-0.45,-0.05) {$\mathsf{u}$};
		\node[right,purple] at (0,-0.4) {$\mathsf{v}$};
		\node[right,purple] at (0.3,0.2) {$\mathsf{w}$};
		\node[left] at (-2,0) {$\ell$};
		\node[above] at (0,0) {$q$};
	\end{tikzpicture}%
	\caption{The image of the toric skeleton (black) of $\Tot \cO_{\bP^1}(-1) \oplus \cO_{\bP^1}(-1)$ with an outer brane $L$ (blue) in framing $f=0$ (red) under the moment map.}
	\label{fig: example outer brane}
\end{figure}
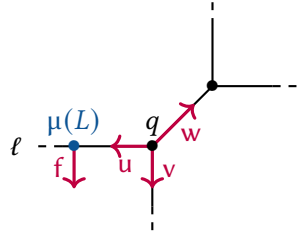%
the weights of the induced $\gpfont{T}$ action on the tangent spaces of three torus orbit closures at this point. Here, $\msf{u} = c^{\gpfont{T}}_1(T_q \ell)$ and $\msf{v}$ is chosen so that $\msf{u} \wedge \msf{v}\geq 0$ as indicated in \Cref{fig: example outer brane}. A \define{framing} of $L$ is the choice of an element $\msf{f}\in \hhh^2_{\gpfont{T}}(\mr{pt},\mbb{Z})$ satisfying\footnote{This definition agrees with the one in \cite[Section~2.5]{FL13:OpenGWtorCY3} and differs from \cite[Definition~6.1]{BBvG2} by a minus sign.}
\begin{equation*}
	\msf{u} \wedge \msf{f} = \msf{u} \wedge \msf{v}\,.
\end{equation*}
Equivalently, we have
\begin{equation}
	\label{eq: defn framing factor}
	\msf{f} = \msf{v} - f \msf{u}
\end{equation}
for some $f\in \mbb{Z}$. Moreover, as an element in $\hhh^2_{\gpfont{T}}(\mr{pt},\mbb{Z})$ one may view $\msf{f}$ as a character $\gpfont{T} \rightarrow \Gm$. This defines a one dimensional subtorus $\gpfont{T}_{\msf{f}}\coloneqq \ker \msf{f}\hookrightarrow \gpfont{T}$ which we will refer to as the \define{framing subtorus}. Finally, let us denote by $\lvert_{\msf{f}=0}$ the restriction map
\begin{equation*}
	\hhh_*^{\gpfont{T}}(\mr{pt},\mbb{Z}) \longrightarrow \hhh_*^{\gpfont{T}_{\msf{f}}}(\mr{pt},\mbb{Z})
\end{equation*}
as this morphism essentially sets the weight $\msf{f}$ equal to zero.

\begin{defn}
	\label{defn: toric triple}
	A \define{toric triple} $(\torX,L,\msf{f})$ consists of
	\begin{itemize}
		\item a smooth semiprojective toric Calabi--Yau threefold $\torX$,
		\item an outer brane $L$,
		\item a framing $\msf{f}$.
	\end{itemize}
\end{defn}
\begin{rmk}
	\label{rmk: not semiproj triple}
	To cover certain interesting cases it will actually be necessary to relax the above definition slightly. Concretely, we would like to include the case where $\torX$ is not semiprojective in our discussion as well. In this case we replace the second point in the above definition with the choice of some non-compact one dimensional torus orbit closure $\ell$.
\end{rmk}

\subsubsection{Open Gromov--Witten invariants} Given a toric triple $(\torX,L,\msf{f})$ and a collection $\mathbf{c}=(c_1,\ldots,c_n)$ of positive integers we denote by
\begin{equation*}
	\Mbar_{g,\mathbf{c},\upbeta^{\prime}}(\torX,L)
\end{equation*}
the moduli space parametrising genus $g$ stable maps $f:(C,\partial C) \rightarrow (\torX,L)$ whose domain $C$ is a Riemann surface with $n$ labelled boundary components $\partial C = \partial C_1 \sqcup \ldots \sqcup \partial C_n$ such that for all $i\in \{1,\ldots,n\}$
\begin{equation*}
	f_\cstar [\partial C_i] = c_i [S^1]\in \hhh_1(L,\mbb{Z}) \quad\text{ and }\quad f_\cstar [C] = \upbeta^{\prime} + \textstyle{\sum_i} c_i [B] \in \hhh_2(\torX,L,\mbb{Z})\,.
\end{equation*}
Here, $\upbeta^{\prime}$ a curve class in $X$ and $B$ is the unique $\gpfont{T}_{\mbb{R}}$-preserved disk stretching out from $q$ to the circle $S^1$ in which $L$ and $\ell$ intersect. The virtual dimension of this moduli problem is zero. We refer to \cite{KL02:StabMapsBoundary} for details.

Moduli spaces parametrising stable maps with boundaries are (if they actually exist) usually notoriously hard to work with. In our case there is however a way out. Since the Calabi--Yau torus $\gpfont{T}_{\mbb{R}}$ leaves $L$ invariant the $\gpfont{T}_{\mbb{R}}$-action on $\torX$ lifts to an action on $\Mbar_{g,\mathbf{c},\upbeta^{\prime}}(\torX,L)$. Then as opposed to its ambient space the (anticipated) fixed locus
\begin{equation*}
	\Mbar_{g,\mathbf{c},\upbeta^{\prime}}(\torX,L)^{\gpfont{T}_{\mbb{R}}}
\end{equation*}
is a compact complex orbifold. So as in \cite{FL13:OpenGWtorCY3} we define open Gromov--Witten invariants via virtual localisation:
\begin{equation}
	\label{eq: defn open GW}
	\OGW{g,\mathbf{c},\upbeta^{\prime}}{\torX,L,\msf{f}}\coloneqq \int_{[\Mbar_{g,\mathbf{c},\upbeta^{\prime}}(\torX,L)^{\gpfont{T}_{\mbb{R}}}]^{\vir}_{{\gpfont{T}_{\mbb{R}}}}} \frac{1}{e^{\gpfont{T}_{\mbb{R}}} (N^{\vir})} ~ \bigg\lvert_{\msf{f}=0} \in \bQ\,.
\end{equation}
The above integral produces a rational function in the two equivariant parameters of homogenous degree zero. Only the restriction to the framing subtorus yields a rational number. This is how definition \eqref{eq: defn open GW} depends on the choice of framing $\msf{f}$.

We recall a result of Fang and Liu which allows us to express open Gromov--Witten invariants of $(\torX,L,\msf{f})$ in terms of descendant invariants of $\torX$.

\begin{thm}
	\label{prop: open GW to psi insertion}
	\cite[Proposition 3.4]{FL13:OpenGWtorCY3} If $(\torX,L,\msf{f})$ is a toric triple then
	\begin{equation}
	\label{eq: open GW to psi insertion}
			\OGW{g,\mathbf{c},\upbeta^{\prime}}{\torX,L,\msf{f}} =
			\prod_{i=1}^n (-1)^{f c_i} \frac{\textstyle{\prod_{k=1}^{c_i-1}(f c_i +k)}}{\msf{u} \cdot c_i!}
			\int_{[\Mbar_{g,n,\upbeta^{\prime}}(\torX)^{\gpfont{T}}]^{\vir}_{\gpfont{T}}} \frac{1}{e^{\gpfont{T}}(N^{\vir})} \, \prod_{i=1}^n \frac{\mr{ev}_i^{\cstar} \upphi}{ \big(\tfrac{\msf{u}}{c_i}-\uppsi_i\big) }  ~\bigg\lvert_{\msf{f}=0}
	\end{equation}
	where $\upphi$ is the $\gpfont{T}$-equivariant Poincar\'{e} dual of the torus fixed point $q$ contained in the one dimensional torus orbit closure $\ell$ intersected by $L$.
\end{thm}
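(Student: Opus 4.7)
The plan is to prove the identity via virtual localisation applied independently to both sides and a careful identification of the resulting fixed loci. On the left-hand side, the moduli space $\Mbar_{g,\mathbf{c},\upbeta^{\prime}}(\torX,L)$ carries a $\gpfont{T}_{\mbb{R}}$-action whose fixed substack admits an explicit description in the spirit of Katz--Liu: a $\gpfont{T}_{\mbb{R}}$-fixed stable map decomposes into a \emph{closed part} $f_0\colon C_0 \to \torX$, which is $\gpfont{T}$-fixed with closed nodal domain, together with $n$ \emph{disc parts} $f_i\colon D_i \to \torX$, where each $D_i$ is a disc with boundary on $L$ whose image is a $c_i$-fold multicover of the unique $\gpfont{T}_{\mbb{R}}$-preserved basic disc $B \subset \ell$. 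Each $D_i$ is glued to $C_0$ at a node lying above the torus fixed point $q$, which corresponds to an interior marked point on $C_0$ mapping to $q$ on the closed side.

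Next, I would compute the contribution of each disc $D_i$ to the integrand $1/e^{\gpfont{T}_{\mbb{R}}}(N^{\vir})$ directly from the tangent--obstruction complex for maps to $(\torX,L)$ with Lagrangian boundary. A $c_i$-fold cover of $B$ contributes an automorphism factor $1/c_i$, a denominator proportional to $\msf{u}/c_i$ from $H^0$ of the pullback of $T_{\torX}|_B$, and a hypergeometric product of weights on $H^1$ of the pulled back normal bundle. After passing to the framing subtorus via $\lvert_{\msf{f}=0}$, which effectively sets the non-Calabi--Yau direction to zero using $\msf{f}=\msf{v}-f\msf{u}$, this bundle of data collapses to
\[
\frac{(-1)^{f c_i}\,\prod_{k=1}^{c_i-1}(f c_i +k)}{\msf{u}\cdot c_i!}\,,
\]
with the sign and the rising factorial arising from the framing weights on the obstruction bundle.

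The final ingredient is the gluing of each disc to the closed curve at its attachment node. Standard node smoothing yields a factor $1/(\msf{u}/c_i - \uppsi_i)$, where $\msf{u}/c_i$ is the tangent weight at the disc side of the node (the $c_i$-th root of the tangent direction to $B$) and $\uppsi_i$ is the cotangent line class on $\Mbar_{g,n,\upbeta^{\prime}}(\torX)$. The condition that the closed-part marking lands at $q$ is imposed by the equivariant Poincar\'e dual insertion $\mr{ev}_i^{\cstar}\upphi$. Summing over all ways to attach the $n$ discs, the closed-part factor reassembles into the equivariant descendant integral
\[
	\int_{[\Mbar_{g,n,\upbeta^{\prime}}(\torX)^{\gpfont{T}}]^{\vir}_{\gpfont{T}}} \frac{1}{e^{\gpfont{T}}(N^{\vir})} \prod_{i=1}^n \frac{\mr{ev}_i^{\cstar}\upphi}{\msf{u}/c_i - \uppsi_i}\,,
\]
and factoring out the disc prefactors produces the claimed formula.

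The main obstacle is the rigorous construction of the $\gpfont{T}_{\mbb{R}}$-equivariant virtual fundamental class on $\Mbar_{g,\mathbf{c},\upbeta^{\prime}}(\torX,L)$ and the verification that gluing of virtual classes at the disc--curve nodes proceeds as in the closed setting. This requires working with the deformation complex of maps with Lagrangian boundary and checking that the restriction along $\msf{f}=0$ indeed produces a well-defined rational number. Once the Katz--Liu disc contribution has been normalised and the virtual gluing formula has been established, the remainder of the argument amounts to bookkeeping the multicover weights and matching them to the right-hand side.
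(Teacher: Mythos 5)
This statement is quoted, not proved, in the paper --- it is \cite[Proposition 3.4]{FL13:OpenGWtorCY3} --- and your sketch follows essentially the same Katz--Liu/Fang--Liu localisation argument as that reference: decompose the $\gpfont{T}_{\mbb{R}}$-fixed locus into a closed part glued at $q$ to degree-$c_i$ multicovers of the basic disc, compute the disc and node-smoothing factors, and reassemble the closed part into the descendant integral. Note only that the ``main obstacle'' you flag largely dissolves in the framework used here, since the open invariants are \emph{defined} by the localisation expression \eqref{eq: defn open GW} over the compact fixed locus, so no virtual class on the full open moduli space is required.
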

\begin{rmk}
	\label{rmk: open equals formal toric}
	In \cite{LLLZ09:MathTopVert} Li, Liu, Liu and Zhou generalise the definition of open Gromov--Witten invariants to the case when $\torX$ is not semiprojective. (This assumption was necessary for the construction of Aganagic--Vafa Lagrangian submanifolds.) In this case the datum of an outer brane gets replaced by the choice of a non-compact one dimensional torus orbit closure $\ell$ and open Gromov--Witten invariants are defined via formal relative invariants. It is shown in \cite[Corollary 3.3]{FL13:OpenGWtorCY3} that these invariants still satisfy \Cref{prop: open GW to psi insertion}.
\end{rmk}

\subsection{Statement of the main correspondence}
\label{sec: log 3fold to open}
Let $(S \vertspace D_1+D_2)$ be a two-component Looijenga pair and $\upbeta$ an effective curve class in $S$. We demand that this data satisfies \Cref{assumption: star} which we recall means that
\begin{itemize}
	\item $D_i \cdot \upbeta >0$, $i\in\{1,2\}$,
	\item $D_2 \cdot D_2\geq 0$,
	\item $(S\vertspace D_1)$ deforms into a pair $(\defS \vertspace \defDone)$ with $\defS$ a smooth projective toric surface and $\defDone$ a toric hypersurface.
\end{itemize}
\begin{rmk}
	\label{rmk: explanation deformation property}
	By the last condition we mean that there is a logarithmically smooth morphism of fine saturated logarithmic schemes $\mc{S} \rightarrow T$ with $T$ irreducible and integral together with a line bundle $\mc{L}$ on $\mc{S}$. Moreover, there are regular points $t,t^{\prime}: \Speck \rightarrow T$ with fibres
	\begin{equation*}
		\mc{S}_{t} = (S\vertspace D_1)\, , \qquad \mc{S}_{t^{\prime}} = (\defS\vertspace \defDone)
	\end{equation*}
	on which $\mc{L}$ restricts to $\mc{L}_t = \cO_{S}(-D_2)$ and $\mc{L}_{t'} = \upomega_{\defS}(\defDone)$ where $\upomega_{\defS}$ is the canonical bundle on $\defS$.
\end{rmk}

Given $(\defS \vertspace \defDone)$ as above we write $Y\coloneqq \Tot \upomega_{\defS}(\defDone)$ and write $D$ for the preimage of $\defDone$ under the projection $\uppi: Y \rightarrow \defS$. We remark that all compact one dimensional torus orbit closures in $Y$ come from the toric boundary of $\defS$ embedded via the zero section $\defS \hookrightarrow Y$. This way we may especially view $\defDone$ as a toric curve in $Y$.

\begin{construction}
	\label{constr: toric triple}
	To $(S \vertspace D_1+D_2)$ as above we associate the toric triple $(\torX,L,\msf{f})$ where
	\begin{equation*}
		\torX\coloneqq  Y \setminus D
	\end{equation*}
	and $L\subset \torX \subset Y$ is an outer brane intersecting a compact one dimensional torus orbit closure $\ell \subset Y$ adjacent to $\defDone$. The action of the Calabi--Yau torus $\gpfont{T}$ of $X$ naturally extends to $Y$. Writing $F$ for the fibre $\uppi^{-1}(p)$ over the point $p = \ell \cap \defDone$ we set $\msf{f} \coloneqq c_1^{\gpfont{T}} (F)$ to be the weight of the $\gpfont{T}$-action on $F$.
\end{construction}

We are now able to state our main result. For this we fix $c_1,\ldots,c_n > 0$ with $\sum_i c_i = D_1 \cdot \upbeta$ and denote by $\mathbf{\hat{c}}$ the contact datum
\begin{equation}
	\label{eq: defn c hat}
	\mathbf{\hat{c}} = \bigg(\raisebox{-0.9em}{\begin{minipage}{13.2em}$\hspace{0.2em}\begin{matrix}
				0 & \cdots & 0 & c_1 & \cdots & c_n & 0\\
				0 & \mathclap{\underbrace{\makebox[4.5em]{$\cdots$}}_{\text{$m$ times}}} & 0 & 0 & \cdots & 0 & D_2 \cdot \upbeta
			\end{matrix}$\end{minipage}}\bigg)
\end{equation}
along $D_1+D_2$. We assume $m\leq n$.

\begin{thm} \label{thm: log open} (\Cref{thm: intro log-open})
	Under \Cref{assumption: star} we have
	\begin{equation}
		\label{eq:log-open}
		\sum_{g\geq 0} \hbar^{2g-2} ~ \OGW{g,\mathbf{c},\upbeta^{\prime}}{\torX,L,\msf{f}}  = \frac{(-1)^{D_1 \cdot \upbeta}}{m! \, \textstyle{\prod_{i=0}^{m-1}} c_{n-i}} ~ \frac{(-1)^{D_2 \cdot \upbeta +1}}{2 \sin \tfrac{(D_2 \cdot \upbeta)\hbar}{2}} ~ \sum_{g\geq 0} \hbar^{2g-1} ~ \LGW{g,\mathbf{\hat{c}},\upbeta}{S \vertspace D_1+D_2}{(-1)^g \uplambda_g \,\Pi_{i=1}^n \mr{ev}_i(\mr{pt}) }
	\end{equation}
	where $\upbeta^{\prime} = \upiota^{\cstar} \big(\upbeta - (D_1 \cdot \upbeta) [\ell]\big)$ and $\upiota$ is the open inclusion $\torX \hookrightarrow Y$.
\end{thm}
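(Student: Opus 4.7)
The plan is to execute the chain of reductions
\[
	\big(S\vertspace D_1+D_2\big) \;\longleftrightarrow\; \big(Y\vertspace D\big) \;\longleftrightarrow\; (\torX,L,\msf{f})
\]
outlined in the introduction, with $Y\coloneqq\Tot\cO_S(-D_2)$ and $D\coloneqq\uppi^{-1}(D_1)$. The first arrow is supplied by \Cref{thm: GNS main theorem}: substituting \eqref{eq: GNS main theorem} with $\upgamma=\prod_{i=1}^n\mr{ev}_i^{\cstar}(\uppi^{\cstar}\mr{pt})$ into the right-hand side of \eqref{eq:log-open} immediately absorbs the factor $(-1)^{D_2\cdot\upbeta+1}/2\sin\tfrac{(D_2\cdot\upbeta)\hbar}{2}$ and performs the genus shift. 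The theorem thus reduces to the purely three-dimensional comparison
\[
	\sum_{g\geq 0}\hbar^{2g-2}\OGW{g,\mathbf{c},\upbeta^{\prime}}{\torX,L,\msf{f}} \;=\; \frac{(-1)^{D_1\cdot\upbeta}}{m!\prod_{i=0}^{m-1}c_{n-i}}\sum_{g\geq 0}\hbar^{2g-2}\LGW{g,\mathbf{\tilde{c}},\upbeta}{Y\vertspace D}{\textstyle\prod_{i=1}^n\mr{ev}_i^{\cstar}(\uppi^{\cstar}\mr{pt})}.
\]

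I would first handle the case $m=0$, in which the combinatorial prefactor is trivial. Using the third bullet of \labelcref{assumption: star} I deform $(S\vertspace D_1)$ together with the line bundle $\cO_S(-D_2)$ into the toric pair $(\defS\vertspace\defDone)$ with $\upomega_{\defS}(\defDone)$; this induces a log-smooth deformation $(Y\vertspace D)\rightsquigarrow(Y^{\prime}\vertspace D^{\prime})$ of the target whose total space $Y^{\prime}$ is toric and whose open complement $Y^{\prime}\setminus D^{\prime}$ is precisely the Calabi--Yau threefold $\torX$ of \Cref{constr: toric triple}. Deformation invariance of logarithmic Gromov--Witten invariants leaves the right-hand side unchanged, and I can then apply virtual equivariant localisation with respect to the Calabi--Yau torus $\gpfont{T}$ of $Y^{\prime}$. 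The $\gpfont{T}$-fixed loci of $\Mbar_{g,\mathbf{c},\upbeta}(Y^{\prime}\vertspace D^{\prime})$ decompose into a vertex part parametrising stable maps to $\torX$ and rigid rational tails meeting $D^{\prime}$ transversely with prescribed tangencies $c_i$; the equivariant insertion $\uppi^{\cstar}\mr{pt}$ forces each tail to be anchored at the torus-fixed point $q\in\ell\cap\defDone$, whose $\gpfont{T}$-equivariant Poincar\'{e} dual is exactly the class $\upphi$ of \Cref{prop: open GW to psi insertion}. A direct edge-by-edge computation of the virtual normal bundles, together with the curve-class book-keeping $\upbeta^{\prime}=\upiota^{\cstar}(\upbeta-(D_1\cdot\upbeta)[\ell])$ that strips off the $(D_1\cdot\upbeta)$ disk classes $[B]$, should reproduce exactly the framing factor $\prod_i(-1)^{fc_i}\prod_{k=1}^{c_i-1}(fc_i+k)/(\msf{u}\cdot c_i!)$ and the descendant denominators $\msf{u}/c_i-\uppsi_i$ in \eqref{eq: open GW to psi insertion}, establishing the identity when $m=0$.

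For the general case $m\geq 1$ I would proceed by induction on $m$, converting one interior marking carrying a point constraint $\mr{ev}^{\cstar}(\mr{pt})$ into a combinatorial weight $1/(m\,c_{n-m+1})$ at a tangent marking. Two natural mechanisms suggest themselves: either a virtual divisor-type relation expressing such an interior point condition as a transversal intersection with a generic $\uppi$-fibre that reabsorbs into the adjacent tangent marking $c_{n-m+1}$, or a degeneration of the target along such a fibre into which the interior marking migrates on a rational bubble. Iterating $m$ times, with the $S_m$-symmetry among the interior markings accounting for the $1/m!$, produces the full prefactor $1/(m!\prod_{i=0}^{m-1}c_{n-i})$.

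The main obstacle is the edge-by-edge matching in the $m=0$ step: one must verify that the logarithmic virtual class of $\Mbar(Y^{\prime}\vertspace D^{\prime})$ localises on each rigid tail to precisely the equivariant integrand $\upphi/(\msf{u}/c_i-\uppsi_i)$ of \eqref{eq: open GW to psi insertion}, after restriction to the framing subtorus $\gpfont{T}_{\msf{f}}$. This is the log-geometric analogue of the open-string calculation of \cite[\S3]{FL13:OpenGWtorCY3} and should follow from a careful comparison of the logarithmic obstruction theory along the tails meeting $D^{\prime}$ with the descendant virtual normal contributions at the contact points, ultimately hinging on the fact that maximum tangency along $D^{\prime}$ in logarithmic geometry is the precise mirror of the winding datum that defines the open invariants.
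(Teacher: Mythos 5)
Your overall architecture matches the paper's: reduce via \Cref{thm: GNS main theorem} to a statement about $(Y\vertspace D)$, prove that statement by localisation when $m=0$, and then strip off the interior markings. But there are two genuine gaps in the $m=0$ step. First, you localise with respect to the full Calabi--Yau torus $\gpfont{T}$, under which the relative divisor $D'$ is \emph{not} pointwise fixed; the relative virtual localisation formula of Graber--Vakil requires the divisor to be fixed by the localising torus. The paper instead localises with respect to the one-dimensional framing subtorus $\gpfont{T}_{\msf{f}}$, precisely because the weight identity $c_1^{\gpfont{T}}(T_pF)=-c_1^{\gpfont{T}}(T_pD_1)=\msf{f}$ makes $D$ pointwise $\gpfont{T}_{\msf{f}}$-fixed (the full torus $\gpfont{T}$ is only brought in afterwards, to evaluate the resulting integral over $\Mbar_{g,n,\upbeta'}(X)^{\gpfont{T}_{\msf{f}}}$ and produce the $|_{\msf{f}=0}$ restriction in Fang--Liu's formula). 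Second, and more seriously, your description of the fixed locus --- a vertex part mapping to $X$ plus rigid rational tails --- accounts only for the \emph{simple} fixed locus. Relative localisation also produces composite components whose general points map to nontrivial expanded degenerations of $(Y\vertspace D)$ glued from copies of $\bP_D(\cO\oplus N_DY)$, and showing that these contribute zero is the technical heart of the paper's argument (it requires a curve-class restriction to fibre classes, excess-intersection vanishings forcing star-shaped splitting graphs with one leg and one edge per $Z$-vertex, a Hodge-bundle vanishing in positive genus, and a final virtual-dimension count). Without this your localisation identity is simply not established.

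The $m\geq 1$ reduction is also only gestured at. Your second suggested mechanism (degenerate the target, let the interior marking migrate onto a rational bubble) is the right one and is essentially what the paper does: a single degeneration to the normal cone of $D$ in $Y$, the ACGS decomposition formula, a parallel suite of vanishing lemmas constraining the splitting types, and an explicit evaluation of the surviving star-shaped contributions, where the $m!$ counts the bijections $I_{\mr{pt}}\cong J_{\mathbbm{1}}$ and each factor $c_{n-i}$ arises from the gluing multiplicity $d_e$ combined with the degree $1/c_i$ of the rubber moduli of multiple covers. Your first mechanism (a divisor-type relation reabsorbing an interior \emph{point} condition into an adjacent tangency marking) has no obvious implementation --- the divisor axiom does not apply to point classes --- so you should commit to the degeneration route and supply the vanishing and gluing analysis.
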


See \Cref{sec:dP3 0 2} for an extended example illustrating \Cref{constr: toric triple} and an application of the above \namecref{thm: log open}.

\begin{rmk}
	Note that \Cref{constr: toric triple} depends on the choice which of the two torus orbit closures adjacent to $D_1$ the Lagrangian submanifold $L$ is intersecting. Hence, a priori, one should expect the right-hand side of \eqref{eq: log 3fold to open} to depend on this choice as well and it should come as a surprise that \Cref{thm: log open} actually demonstrates an independence.
\end{rmk}

\begin{rmk}
	If $\torX$ as constructed in \labelcref{constr: toric triple} turns out to be non-semiprojective the left-hand side of equation \eqref{eq:log-open} can still be defined as described in \Cref{rmk: open equals formal toric}.
\end{rmk}

\begin{rmk}
	Let us quickly comment on the importance of \Cref{assumption: star}. The first two points guarantee that the moduli stack of relative stable maps to $(Y \vertspace D)$ is proper which is required in \Cref{thm: GNS main theorem}. The last point in \Cref{assumption: star} is necessary for $\torX$ to be toric which is in turn required in the definition of open Gromov--Witten invariants we use. The condition is rather technical but allows us treat interesting cases such as the example we will discuss in \Cref{sec:dP3 0 2}.
\end{rmk}

We observe that in the light of \Cref{thm: GNS main theorem} and \Cref{prop: open GW to psi insertion} it suffices to prove the following \namecref{prop: log threefold to descedant} in order to deduce \Cref{thm: log open}.

\begin{prop}
	\label{prop: log threefold to descedant}
	Under \Cref{assumption: star} we have
	\begin{equation}
		\label{eq: log 3fold to open}
	\begin{split}
		&\LGW{g,\mathbf{\tilde{c}},\upbeta}{Y  \vertspace  D}{\Pi_{i=1}^n\mr{ev}_i^{\cstar}(\uppi^{\cstar} \mr{pt})}\\
		&\hspace{3em}= (-1)^{D_1 \cdot \upbeta} m! \left( \textstyle{\prod_{i=0}^{m-1}}c_{n-i}\right) \prod_{i=1}^n (-1)^{f c_i} \frac{\textstyle{\prod_{k=1}^{c_i-1}(f c_i +k)}}{\msf{u} \cdot c_i!} ~ \int_{[\Mbar_{g,n,\upbeta^{\prime}}(\torX)^{\gpfont{T}}]^{\vir}_{\gpfont{T}}} \frac{1}{e^{\gpfont{T}}(N^{\vir})} \, \prod_{i=1}^n \frac{\mr{ev}_i^{\cstar} \upphi}{ \tfrac{\msf{u}}{c_i}-\uppsi_i } ~\bigg\lvert_{\msf{f}=0}
	\end{split}
	\end{equation}
	where $\upbeta^{\prime}=\upiota^{\cstar} (\upbeta - (D_1 \cdot \upbeta) [\ell])$ and $\upphi$ is the $\gpfont{T}$-equivariant Poincar\'{e} dual of the torus fixed point $q$ in $\ell\cap X$.
\end{prop}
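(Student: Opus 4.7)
The plan is to first reduce to the toric setting, then establish the identity for $m=0$ by equivariant localisation, and finally reduce general $m$ to $m=0$.

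\textbf{Toric reduction.} Using the third bullet of \Cref{assumption: star}, I would deform $(S \vertspace D_1)$ together with the line bundle $\cO_S(-D_2)$ to $(\defS \vertspace \defDone)$ with line bundle $\upomega_{\defS}(\defDone)$ as in \Cref{rmk: explanation deformation property}. Both sides of \eqref{eq: log 3fold to open} are constant in this logarithmically smooth family: the left-hand side by deformation invariance of logarithmic Gromov--Witten theory for log smooth families, and the right-hand side is intrinsically defined only once $\torX$ is toric. Hence it suffices to prove the identity after deforming, at which point $Y$ is a toric Calabi--Yau threefold and the Calabi--Yau torus $\gpfont{T}$ acts on the whole picture, lifting in particular to $\Mbar_{g,\tilde{\mathbf c},\upbeta}(Y \vertspace D)$.

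\textbf{The case $m = 0$ via $\gpfont{T}$-equivariant virtual localisation.} Apply $\gpfont{T}$-equivariant virtual localisation to both sides. The $\gpfont{T}$-fixed loci of $\Mbar_{g,\tilde{\mathbf c},\upbeta}(Y \vertspace D)$ are indexed by decorated tropical graphs: each vertex is mapped to a torus-fixed point of $Y$, each compact edge parametrises a multi-cover of a torus-invariant curve, and each of the $n$ half-edges corresponding to the tangent markings meets $D$ with prescribed multiplicity $c_i$. The $\gpfont{T}$-fixed loci of $\Mbar_{g,n,\upbeta^{\prime}}(\torX)$ are indexed by the same graphs after pruning off the half-edges ending on $D$. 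Away from these tangent half-edges the vertex and edge contributions agree on the two sides, so the identity reduces to showing that the contribution of a single tangent half-edge on the log side equals
\begin{equation*}
	(-1)^{f c_i} \frac{\textstyle{\prod_{k=1}^{c_i-1}(f c_i + k)}}{\msf{u} \cdot c_i!} \cdot \frac{\mr{ev}_i^{\cstar} \upphi}{\msf{u}/c_i - \uppsi_i}
\end{equation*}
after the restriction $\lvert_{\msf{f}=0}$. This is the framed disc function of the topological vertex, whose derivation from a local multi-cover moduli near $D$ is a standard Hodge-integral computation going back to Katz--Liu and Liu--Liu--Zhou. Summing over graphs and tracking the overall sign $(-1)^{D_1 \cdot \upbeta} = \prod(-1)^{c_i}$ coming from the reorientation of the cap normal bundles yields the proposition for $m = 0$.

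\textbf{Reduction from $m$ to $m = 0$.} For general $m$ each interior marking carries the insertion $\mr{ev}_j^{\cstar}(\uppi^{\cstar} \mr{pt})$. For a generic $p \in S$ the insertion concentrates $p_j$ on the fibre $\uppi^{-1}(p)$; choosing $p$ to approach a generic point of $D_1$ in a deformation and then performing a local analysis near $D$, each such marking can be absorbed into one of the $n$ tangent markings by bubbling off a vertical $\bP^1$ meeting $D$ with the tangency of that marking. The weight of this absorption is the tangency order $c_{n-j}$, and summing over the $m!$ orderings of which interior marking is glued to which of the last $m$ tangent markings produces the combinatorial factor $m! \prod_{i=0}^{m-1} c_{n-i}$, reducing the general case to $m=0$.

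\textbf{Main obstacle.} The hardest step is the $m = 0$ localisation: matching the $\gpfont{T}$-fixed log strata of $\Mbar_{g,\tilde{\mathbf c},\upbeta}(Y \vertspace D)$ with the $\gpfont{T}$-fixed strata of $\Mbar_{g,n,\upbeta^{\prime}}(\torX)$. The log fixed loci come with tropical combinatorics that can include contracted components supported on $D$ and expansions along $D$, and the restriction of the log virtual class to these strata must be identified with the formal cap contribution of the topological vertex. The reduction step is also delicate because interior markings need not concentrate on $D$ a priori, and the absorption argument must be implemented compatibly with the log virtual class rather than purely combinatorially.
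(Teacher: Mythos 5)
Your skeleton matches the paper's: a toric deformation, a localisation argument for $m=0$, and a degeneration-type reduction from general $m$ to $m=0$ with the factor $m!\prod_{i=0}^{m-1}c_{n-i}$ arising from pairing interior markings with contact markings. But two of the three steps are left at the level of a heuristic precisely where the real work lies, and one structural choice you make would derail the argument as written. First, you localise $\Mbar_{g,\tilde{\mathbf{c}},\upbeta}(Y\vertspace D)$ with respect to the full Calabi--Yau torus $\gpfont{T}$. The paper instead localises with respect to the one-dimensional framing subtorus $\gpfont{T}_{\msf{f}}=\ker\msf{f}$: since $c_1^{\gpfont{T}}(T_pF)=-c_1^{\gpfont{T}}(T_pD_1)=\msf{f}$, the divisor $D$ is \emph{pointwise} fixed by $\gpfont{T}_{\msf{f}}$ but not by $\gpfont{T}$, and pointwise fixedness is what makes the Graber--Vakil relative localisation package applicable and forces the clean simple/composite dichotomy of fixed loci. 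Moreover, the paper first represents every insertion $\uppi^{\cstar}\mr{pt}$ by the fibre $F=\uppi^{-1}(p)$ over the fixed point $p=\ell\cap D_1$; it is this confinement, combined with negativity of $\cO_S(-D_2)$ on $\ell$, that forces the components carrying the tangency markings to be $c_i$-fold covers of $\ell$ and identifies the simple fixed locus with a quotient of $\Mbar_{g,n,\upbeta'}(X)\lvert_q^{\gpfont{T}_{\msf{f}}}$. Without choosing the cycle representative and the subtorus this way, your claim that the fixed loci of the two sides "are indexed by the same graphs after pruning" is not something you can just assert.

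Second, and more seriously, the contribution of the composite fixed locus (maps to expanded degenerations along $D$) is not dealt with at all: you name it as "the main obstacle" but give no argument. In the paper this vanishing occupies the bulk of the $m=0$ proof (\Cref{sec: composite locus}): one first kills all splitting types whose $Z$-vertices carry non-fibre classes, then uses excess-intersection arguments on the triviality of bundles over $F^{n-1}\cong\Aaff{n-1}$ to reduce to star-shaped graphs with genus-zero $Z$-vertices, and finally a virtual dimension count of $\Mbar_{\Gamma_Z}(\bP^1\vertspace 0+\infty)^{\rubber}$ gives the vanishing. None of this is a "standard Hodge-integral computation"; it is the step that makes the edge-by-edge matching legitimate. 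Similarly, your reduction for $m\geq 1$ gestures at "absorbing" interior markings by bubbling, but the paper's version requires a degeneration to the normal cone of $D$, the decomposition formula, a vanishing result for $[\Mbar_{g_Z,k,d[D_1]}(D_0)]^{\vir}$ using that $D_0\cong\Tot\cO_{\bP^1}(-2)$ is a non-compact K3, and the hypothesis $m\leq n$ to rule out interior markings on the $Y$-component. As it stands the proposal is a correct outline of the strategy, not a proof: both the composite-locus vanishing and the rigorous absorption of interior markings are missing.
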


We will prove this statement in two steps:
\begin{itemize}
	\item In \Cref{sec: log 3fold to open proof} we first establish the special case $m=0$ (\Cref{thm: log open no interior marking}).

	\item In \Cref{sec: reduction} we prove the general case via a reduction argument (\Cref{thm: comparison point conditions}).
\end{itemize}
But before we embark on proving \Cref{prop: log threefold to descedant}, let us quickly convince ourselves that the \namecref{prop: log threefold to descedant} indeed implies \Cref{thm: log open}.

\begin{proof}[Proof of \Cref{thm: log open}]
	Comparing formula \eqref{eq: open GW to psi insertion} and \eqref{eq: log 3fold to open} we deduce
	\begin{equation*}
		\OGW{g,\mathbf{c},\upbeta^{\prime}}{\torX,L,\msf{f}} = \frac{(-1)^{D_1 \cdot \upbeta}}{m! \, \textstyle{\prod_{i=0}^{m-1}} c_{n-i}} \cdot \LGW{g,\mathbf{\tilde{c}},\upbeta}{Y  \vertspace  D}{\Pi_{i=1}^n\mr{ev}_i^{\cstar}(\uppi^{\cstar} \mr{pt})}\,.
	\end{equation*}
	Now by deformation invariance \cite[Theorem 0.3]{GS13:LogGWInv} (see also \cite[Appendix A]{MR20:DescLogGWTrop}) the Gromov--Witten invariants of $(S \vertspace D_1)$ and $(\defS \vertspace \defDone)$ agree. In combination with \Cref{thm: GNS main theorem} this therefore gives the identity stated in \Cref{thm: log open}.
\end{proof}

\section{Step I: no interior markings}
\label{sec: log 3fold to open proof}
This section is devoted to the proof of a special instance of \Cref{prop: log threefold to descedant}.

\begin{prop}
	\label{thm: log open no interior marking}
	The statement of \Cref{prop: log threefold to descedant} holds if there are no interior markings ($m=0$).
\end{prop}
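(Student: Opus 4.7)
The plan is to establish \Cref{thm: log open no interior marking} by applying equivariant virtual localization with respect to the Calabi--Yau torus $\gpfont{T}$ on the left-hand side of \eqref{eq: log 3fold to open} and matching the resulting fixed-locus contributions graph by graph with the Fang--Liu expression on the right-hand side. By the deformation invariance argument already used in the proof of \Cref{thm: log open} I may assume that $(S \vertspace D_1) = (\defS \vertspace \defDone)$ is already toric, in which case $Y = \Tot\upomega_{\defS}(\defDone)$, $D = \uppi^{-1}(\defDone)$ and $X = Y \setminus D$ are all toric with a compatible $\gpfont{T}$-action that lifts to $\Mbar_{g,\mathbf{\tilde{c}},\upbeta}(Y \vertspace D)$. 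Since the logarithmic invariant on the LHS is non-equivariant, I am free to represent $\uppi^{\cstar}[\mr{pt}]$ by any $\gpfont{T}$-equivariant lift; the natural Atiyah--Bott choice $i_{F\cstar}(1)$ supported on the fibre $F = \uppi^{-1}(p)$ over the torus fixed point $p = \ell \cap \defDone$ is tailored to the open geometry appearing in \Cref{constr: toric triple}.

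With this choice, the insertion $\mr{ev}_i^{\cstar}[F]$ together with the positive tangency $c_i > 0$ along $D$ forces every marked point of a $\gpfont{T}$-fixed stable log map to land at $p$. The corresponding $\gpfont{T}$-fixed loci then admit a ``body-plus-legs'' combinatorial description: the body is a $\gpfont{T}$-fixed stable map to $X$, and each of the $n$ legs is a $c_i$-fold multiple cover of the compact invariant curve $\ell$, totally ramified at $p$ and glued to the body at the opposite torus fixed point $q \in \ell \cap X$. Standard node-smoothing identifies the body integral with the RHS integrand over $\Mbar_{g,n,\upbeta^{\prime}}(X)^{\gpfont{T}}$, producing the evaluation insertion $\mr{ev}_i^{\cstar}\upphi$ and the descendant factor $1/(\tfrac{\msf{u}}{c_i} - \uppsi_i)$ at each attaching marking. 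The leg integrals are then $c_i$-fold multiple cover Hodge integrals that evaluate --- by the Mari\~{n}o--Vafa type computation already underlying \Cref{prop: open GW to psi insertion}, cf.\ \cite[\S 3]{FL13:OpenGWtorCY3} --- to the disk factor $(-1)^{fc_i}\prod_{k=1}^{c_i-1}(fc_i+k)/(\msf{u}\,c_i!)$, up to an orientation sign $(-1)^{c_i}$ per leg. Since $\sum_i c_i = D_1 \cdot \upbeta$, these signs accumulate to the prefactor $(-1)^{D_1 \cdot \upbeta}$ on the right-hand side of \eqref{eq: log 3fold to open}, completing the comparison.

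The main obstacle is geometric rather than combinatorial: I need to set up $\gpfont{T}$-equivariant virtual localization on the non-proper logarithmic moduli stack $\Mbar_{g,\mathbf{\tilde{c}},\upbeta}(Y \vertspace D)$, verify that its $\gpfont{T}$-fixed locus is properly supported after imposing the point insertions (which should follow from projectivity of $\defS$ together with the tangency data pinning the markings to the fibre $F$), and confirm that it admits precisely the body-plus-legs description sketched above. In particular I must rule out spurious $\gpfont{T}$-fixed components --- contracted bubbles over $p$ uncoupled from a full ramification profile, or graphs whose body escapes into $D$ --- and check that the equivariant perfect obstruction theory splits cleanly along the body/leg decomposition so that the leg Hodge integrals factor out. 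Once this geometric input is secured, the numerical matching with \cite{FL13:OpenGWtorCY3} is essentially a direct quotation of known Hodge integral computations.
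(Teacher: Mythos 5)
Your outline follows the same broad strategy as the paper --- localise, identify a ``simple'' (body-plus-legs) fixed locus whose contribution reproduces the Fang--Liu integrand with multiple-cover disk factors on the legs --- but the two points you defer to your ``main obstacle'' paragraph are in fact the substance of the proof, and one of your choices makes them harder rather than easier.

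First, you localise with respect to the full Calabi--Yau torus $\gpfont{T}$. The paper instead localises with respect to the framing subtorus $\gpfont{T}_{\msf{f}}=\ker\msf{f}$, and this is not cosmetic: by \eqref{eq: T weight on D1} the divisor $D$ is \emph{pointwise} fixed by $\gpfont{T}_{\msf{f}}$, which is exactly the hypothesis under which the Graber--Vakil analysis of relative virtual localisation applies and which keeps the rubber geometry over $D$ under control. It also matches the shape of the target formula: the right-hand side of \eqref{eq: log 3fold to open} is a $\gpfont{T}$-equivariant integral restricted to $\msf{f}=0$, i.e.\ a $\gpfont{T}_{\msf{f}}$-equivariant quantity, so the natural intermediate statement (the paper's \Cref{prop: contrib simple locus}) is a $\gpfont{T}_{\msf{f}}$-equivariant integral over $\Mbar_{g,n,\upbeta'}(\torX)^{\gpfont{T}_{\msf{f}}}$, which is only afterwards evaluated by a second localisation with respect to the full $\gpfont{T}$. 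Localising with the full torus from the start leaves you to explain separately why your fixed-point sum lands on the $\msf{f}=0$ specialisation appearing on the right-hand side.

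Second, and more seriously, the torus-fixed locus of the relative moduli space is not just ``body plus legs'': it contains the composite locus of maps to nontrivial expanded degenerations of $(Y\vertspace D)$, glued from a piece mapping to $Y$ and rubber pieces mapping to $Z=\bP_{D}(\cO_D\oplus N_{D}Y)$. Showing that these contribute zero is the bulk of the paper's argument (\Cref{prop: composite contrib vanishes}) and requires a chain of non-obvious vanishings: restricting the curve classes on $Z$-vertices to multiples of fibre classes, excess-intersection arguments forcing each $Z$-vertex to carry exactly one leg and one edge, a $\uplambda_{\mr{top}}^2$ argument killing positive genus on $Z$-vertices, and a final virtual-dimension count $\sum_v(2g_v-2)+|E(\Gamma)|+|L(\Gamma)|-1=-1$. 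Your proposal acknowledges that ``spurious components'' must be excluded but offers no mechanism for doing so; as written, the argument is incomplete precisely where the real work lies.
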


We prove the \namecref{thm: log open no interior marking} via virtual localisation \cite{GP97:virtloc} which allows us to decompose the Gromov--Witten invariant on the left-hand side of equation \eqref{eq: log 3fold to open} into contributions labelled by the fixed loci of a torus action on the moduli stack of relative stable maps to $(Y\vertspace D)$. These loci split into two classes: The ones for which the target $(Y\vertspace D)$ is generically expanded or unexpanded. A careful analysis of the latter shows that their contribution to the overall Gromov--Witten count is precisely the expression we find on the right-hand side of equation \eqref{eq: log 3fold to open} (\Cref{prop: contrib simple locus}). It therefore remains to show that the contribution of all other fixed loci vanishes. This last step is carried out in \Cref{sec: composite locus}.

We start with a careful analysis of the target geometry $(Y\vertspace D)$ where we adopt the notation introduced in \Cref{sec: log 3fold to open}. To unload notation throughout this section we will assume that $S$ is already toric and $D_1$ is a toric hypersurface which means we can take $(\defS\vertspace \defDone) = (S\vertspace D_1)$.

\subsection{The geometric setup}
\label{sec: open geometric setup}
The Calabi--Yau torus $\gpfont{T}$ of $\torX$ also naturally acts on $Y$ as both varieties share the same dense open torus. The compact one dimensional torus orbit closures of $Y$ are given by the irreducible components of the toric boundary of $S$ embedded into $Y$ by the zero section. We write $\ell\hookrightarrow Y$ for the component intersected by $L$. By construction $\ell$ intersects $D_1\hookrightarrow Y$ transversely in a torus fixed point which we denote $p$. We write $q$ for the second torus fixed point of $\ell$.

\begin{figure}%
	\centering%
	\begin{tikzpicture}[smooth, baseline={([yshift=-.5ex]current bounding box.center)}]%
		\draw[thick] (-1.5,0) to (4.5,0);
		\draw[thick,dashed] (-1.5,0) to (-2,0);
		\draw[thick,dashed] (4.5,0) to (5,0);

		\draw[thick] (0,0) to (-1,-2);

		\draw[thick] (-1,-2) to (0,-3);

		\draw[thick] (-1,-2) to (-2,-2.25);
		\draw[thick,dashed] (-2,-2.25) to (-2.4,-2.35);

		\draw[thick] (3,0) to (3.3,-1.5);
		\draw[thick,dotted] (3.3,-1.5) to (3.4,-2) to (2.4,-3.2) to (0.5,-3.5) to (0,-3);
		\draw[thick,dotted] (3.4,-2) to (3.9,-2.2);
		\draw[thick,dotted] (2.4,-3.2) to (2.7,-3.5);
		\draw[thick,dotted] (0.5,-3.5) to (0.4,-3.9);
		\node[below] at (1.5,0) {$D_1$};
		\node[below] at (-1.5,0) {$F$};
		\node[right] at (-0.55,-1.25) {$\ell$};
		%
		\node at (0,0) {$\bullet$};
		\node[below] at (0.15,0) {$p$};
		\node at (3,0) {$\bullet$};
		%
		\node at (-1,-2) {$\bullet$};
		\node[below] at (-1.03,-2.03) {$q$};
		\node[above,purple] at (-0.4,-0.05) {$\msf{f}$};
		\node[above,purple] at (0.4,-0.05) {$-\msf{f}$};
		\node[above,purple] at (2.6,-0.05) {$\msf{f}$};
		\node[above,purple] at (3.4,-0.05) {$-\msf{f}$};
		\node[left,purple] at (-0.13,-0.3) {$-\msf{u}$};
		\node[left,purple] at (-0.8,-1.65) {$\msf{u}$};
		\node[left,purple] at (-1.35,-1.95) {$\msf{f}+f\msf{u}$};
		\node[right,purple] at (-0.83,-2.2) {$-\msf{f}-(f+1)\msf{u}$};
	\end{tikzpicture}%
	\caption{The image of the toric skeleton of $Y$ under the moment map.}
	\label{fig: toric diagram}
\end{figure}
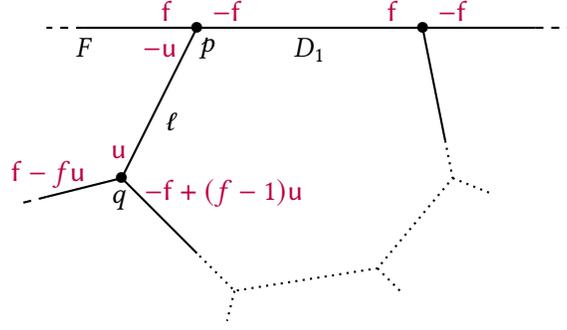%

\Cref{fig: toric diagram} shows the image of one dimensional torus orbit closures of $Y$ under the moment map locally around $D_1$. In this sketch dashed lines correspond to non-compact orbit closures while the dotted lines indicate that the toric boundary of $S\hookrightarrow Y$ forms a circle. Moreover, the red labels represent the weights with which the torus $\gpfont{T}$ is acting on the tangent spaces of the respective torus orbit closures at a fixed point. The observation (see \cite[Lemma 2.7]{GNS23:bicyclic}) that
\begin{equation}
	\label{eq: T weight on D1}
	c_1^{\gpfont{T}}(T_{p}F) = - c_1^{\gpfont{T}}(T_{p}D_1) = \msf{f}
\end{equation}
will be crucial in our localisation calculation later. Moreover, we also note that the framing factor $f$ introduced in \eqref{eq: defn framing factor} can be identified with the degrees
\begin{equation*}
	f = \deg \cO_{S}(-D_2)\big\lvert_{\ell}=  -\deg N_{\ell}Y - 1\,.
\end{equation*}
The toric diagram of $\torX = Y \setminus D$ is obtained from the one displayed in \Cref{fig: toric diagram} by erasing the top vertical line. In this process the divisor $\ell$ gets decompactified and so $L$ is indeed an outer brane.

\subsection{Initialising the localisation}
\Cref{constr: toric triple} involved the choice of a stratum $\ell$ the outer brane is intersecting. We now do an analogous choice by supporting the cycle whose pushforward gives the Gromov--Witten invariant $\LGW{g,\mathbf{c},\upbeta}{Y  \vertspace  D}{\Pi_{i=1}^n\mr{ev}_i^{\cstar}(\uppi^{\cstar} \mr{pt})}$ on a substack with evaluations constrained to the fibre $F=\uppi^{-1}(p)$ where $\uppi$ is the projection $Y\rightarrow S$:
\begin{equation}
	\label{eq: confined evaluation}
	\begin{tikzcd}
		\Mbar_{g,\mathbf{c},\upbeta}(Y \vertspace D)\lvert_{F} \arrow[r, hook] \arrow[d] \ar[rd,phantom,"\square",start anchor=center,end anchor=center] & \Mbar_{g,\mathbf{c},\upbeta}(Y \vertspace D) \arrow[d]\\
		F^n \arrow[r,hook] & D^n\,.
	\end{tikzcd}
\end{equation}
Pulling back the obstruction theory along the horizontal arrow we obtain
\begin{equation*}
	\LGW{g,\mathbf{c},\upbeta}{Y  \vertspace  D}{\Pi_{i=1}^n\mr{ev}_i^{\cstar}(\uppi^{\cstar} \mr{pt})} = \deg [\Mbar_{g,\mathbf{c},\upbeta}(Y \vertspace D)\lvert_{F}]^{\vir}\,.
\end{equation*}
This choice will turn out rather useful when we will calculate the invariant via virtual localisation. Regarding this we remark that the $\gpfont{T}$-action on $Y$ lifts to an action on $\Mbar_{g,\mathbf{c},\upbeta}(Y \vertspace D)\lvert_{F}$ since $D$ and $F$ are both preserved by $\gpfont{T}$. Especially, this also provides us with an action of the framing subtorus $\gpfont{T}_{\msf{f}}$ on the moduli stack.

Now equation \eqref{eq: T weight on D1} tells us that the divisor $D$ is pointwise fixed under $\gpfont{T}_{\msf{f}}$ since the restriction $\gpfont{T}_{\msf{f}}\hookrightarrow \gpfont{T}$ effectively sets the weight $\msf{f}$ equal to zero. This means we are in a situation where the analysis of Graber and Vakil \cite{GV05:RelVirtLoc} of virtual localisation in the context of stable maps relative a smooth divisor applies. Since their analysis was carried out in the setting of relative stable maps to expanded degenerations as introduced by Li \cite{Li01:RelStabMaps,Li02:Degen} we quietly pass to this modification of $\Mbar_{g,\mathbf{c},\upbeta}(Y \vertspace D)$ until the end of this section without actually changing our notation. Since eventually we push all cycles forward to a point it does not matter which birational model we choose for our moduli stack \cite{AMW14:Comparison}. See also \cite{MR19:LocalizeLog} for an analysis of torus localisation in the context of Kim's logarithmic stable maps to expanded degenerations \cite{Kim10:LogStabMaps}.

Following Graber and Vakil \cite{GV05:RelVirtLoc} we split the $\gpfont{T}_{\msf{f}}$-fixed locus $\Mbar_{g,\mathbf{c},\upbeta}(Y \vertspace D)^{\gpfont{T}_{\msf{f}}}$ into two distinguished (not necessarily connected or irreducible) components. First, there is the simple fixed locus
\begin{equation*}
	\Mbar_{g,\mathbf{c},\upbeta}(Y \vertspace D)^{\simple}
\end{equation*}
whose general points are stable maps with target $Y$. The complement of this is called composite fixed locus and consequently its general points correspond to maps to non-trivial expanded degenerations of $(Y \vertspace D)$. We will denote this component by
\begin{equation*}
	\Mbar_{g,\mathbf{c},\upbeta}(Y \vertspace D)^{\mathrm{com}}\,.
\end{equation*}
As we did in \eqref{eq: confined evaluation} we form fibre products
\begin{equation*}
	\Mbar_{g,\mathbf{c},\upbeta}(Y \vertspace D)\lvert_{F}^{\simple} ~\coloneqq~ \Mbar_{g,\mathbf{c},\upbeta}(Y \vertspace D)^{\simple} \times_{D^n} F^n\,,\hspace{3em} \Mbar_{g,\mathbf{c},\upbeta}(Y \vertspace D)\lvert_{F}^{\mathrm{com}}~\coloneqq~\Mbar_{g,\mathbf{c},\upbeta}(Y \vertspace D)^{\mathrm{com}}\times_{D^n} F^n \,.
\end{equation*}
Then virtual localisation \cite{GP97:virtloc} gives
\begin{equation}
	\label{eq: outcome localisation intermed}
	\begin{split}
		\LGW{g,\mathbf{c},\upbeta}{Y  \vertspace  D}{\Pi_{i=1}^n\mr{ev}_i^{\cstar}(\uppi^{\cstar} \mr{pt})} = \,&\deg_{\gpfont{T}_{\msf{f}}} \frac{1}{e^{\gpfont{T}_{\msf{f}}}(N^{\vir}_{\simple})}\cap [\Mbar_{g,\mathbf{c},\upbeta}(Y \vertspace D)\lvert_{F}^{\simple}]^{\vir}\\
		+ \,&\deg_{\gpfont{T}_{\msf{f}}} \frac{1}{e^{\gpfont{T}_{\msf{f}}}(N^{\vir}_{\mathrm{com}})}\cap [\Mbar_{g,\mathbf{c},\upbeta}(Y \vertspace D)\lvert_{F}^{\mathrm{com}}]^{\vir}\,.
	\end{split}
\end{equation}
We will inspect the contribution of the simple and composite fixed locus to the Gromov--Witten invariant individually in the next two subsections.

\subsection{The simple fixed locus}
Closely following \cite{GV05:RelVirtLoc}, we start with an analysis of the simple fixed locus. By definition the image of a relative stable map $f:C\rightarrow Y$ parametrised by
\begin{equation*}
	\Mbar_{g,\mathbf{c},\upbeta}(Y \vertspace D)\lvert_{F}^{\simple}
\end{equation*}
intersects $D$ transversely, ie.\ $f^{\cstar} D = \sum_{i=1}^n c_i x_i$ as Cartier divisors where $x_1,\ldots,x_n$ denote the markings. If we write
\begin{equation*}
	C_1,\ldots,C_n
\end{equation*}
for the irreducible components containing these markings then since $f:C\rightarrow Y$ lies in the $\gpfont{T}_{\msf{f}}$-fixed locus and by assumption none of the components $C_i$ get contracted there must be a (non-trivial) lift of the action $\gpfont{T}_{\msf{f}}$ on $C_i$ (possibly after passing to some cover of $\gpfont{T}_{\msf{f}}$) making $f\lvert_{C_i}$ $\gpfont{T}_{\msf{f}}$-equivariant. As a consequence, for all $i\in\{1,\ldots,n\}$ both $C_i$ and its image under $f$ must be rational curves. Moreover, writing $q_i$ for the second $\gpfont{T}_{\msf{f}}$-fixed point of the image rational curve, $f\lvert_{C_i}$ is the unique degree $c_i$ cover maximally ramified over $f(x_i)$ and $q_i$.

Now since we confined the evaluation morphisms to the fibre $F\subset Y$, we deduce that the image of each component $C_i$ is contained in $\uppi^{-1}(\ell)$ where $\uppi$ is the projection $Y\rightarrow S$ and $\ell$ is as in \Cref{fig: toric diagram}. However, since the pullback of the line bundle $\cO_{S}(-D_2)$ under $\uppi \circ f$ is negative we deduce that $f$ has to factor through the zero section ${S}\hookrightarrow Y$. Hence, the image of each irreducible component $C_i$ has to be the curve $\ell$ already. This also means that $q_i=q$ for all $i\in\{1,\ldots,n\}$.

Now if we write $C' \sqcup \bigsqcup_{i=1}^n C_i$ for the connected components of the partial normalisation of $C$ along the nodes $y_i\in C_i$ mapping to $q$ we observe that $f\lvert_{C'}$ is a genus $g$, class
\begin{equation*}
	\upbeta^{\prime} = \upiota^{\cstar} \big(\upbeta - (D_1 \cdot \upbeta) [\ell]\big)
\end{equation*}
$\gpfont{T}_{\msf{f}}$-fixed stable map to $\torX = Y \setminus D$ with markings $y_1,\ldots,y_n$ confined to $q$. Hence, forming the fibre square
\begin{equation*}
\begin{tikzcd}
	\Mbar_{g,n,\upbeta^{\prime}}(\torX) \lvert_{q} \arrow[r,hook] \arrow[d]\ar[rd,phantom,"\square",start anchor=center,end anchor=center] & \Mbar_{g,n,\upbeta^{\prime}}(\torX) \arrow[d,"\Pi_i\mr{ev}_{y_i}"] \\
	q \arrow[r,hook] & \torX^n
\end{tikzcd}
\end{equation*}
we can summarise the discussion up to now as follows.

\begin{lem}
	We have
	\begin{equation}
		\label{eq: simple fixed locus}
		\Mbar_{g,\mathbf{c},\upbeta}(Y \vertspace D)\lvert_{F}^{\simple} = \Big[\, \Mbar_{g,n,\upbeta^{\prime}}(\torX) \lvert_{q}^{\gpfont{T}_{\msf{f}}} \,\Big/\, \textstyle{\prod_{i=1}^{n}}\,\mbb{Z}/ c_i \mbb{Z} \,\Big]\,.
	\end{equation}
\end{lem}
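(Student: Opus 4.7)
The plan is to construct mutually inverse morphisms between the two stacks, following the strategy developed by Graber and Vakil in \cite{GV05:RelVirtLoc}. I would define $\Phi$ as follows: given a $\gpfont{T}_{\msf{f}}$-fixed stable map $h \colon (C', y_1, \ldots, y_n) \to \torX$ with each $y_i$ mapping to $q$, attach at each $y_i$ a rational tail $C_i \cong \bP^1$ and map $C_i$ onto $\ell$ via the unique degree $c_i$ cover totally ramified over $p$ and $q$. The ramification point over $q$ is identified with $y_i$ to form a node, while the ramification point over $p$ becomes the marking $x_i$ of contact order $c_i$ along $D$; since $p \in F$, these markings automatically land in the fibre. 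Gluing produces a relative stable map in $\Mbar_{g,\mathbf{c},\upbeta}(Y \vertspace D)\lvert_F^{\simple}$.

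The inverse is obtained by partially normalising a given element of the simple fixed locus along the $n$ nodes above $q$: the analysis carried out before the statement identifies the tails $C_i$ as totally ramified covers of $\ell$ and shows that the restriction of $f$ to the complementary component $C'$ is a stable map to $\torX = Y \setminus D$ of genus $g$ and class $\upbeta^{\prime}$ with markings $y_i$ at $q$. Both assignments are manifestly functorial in families and mutually inverse on geometric points.

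The quotient by $\prod_{i=1}^n \mbb{Z}_{c_i}$ accounts for the extra stacky automorphisms introduced by the tails. Each totally ramified degree $c_i$ cover $C_i \to \ell$ carries a cyclic group $\mbb{Z}/c_i$ of deck transformations fixing both ramification points; such an automorphism fixes the node where $C_i$ meets $C'$ and therefore extends by the identity on $C'$ to an automorphism of the glued stable map. These are precisely the automorphisms distinguishing the source of $\Phi$ from its target, so after modding out, $\Phi$ descends to an isomorphism of stacks.

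The main technical issue will be ensuring compatibility of this identification with the logarithmic (or expanded-degeneration) structure used to define $\Mbar_{g,\mathbf{c},\upbeta}(Y \vertspace D)$, since the gluing construction has to produce the correct log enhancement along each marking $x_i$ and at each new node over $q$. Because $D$ is smooth, the argument of \cite[\S2.8]{GV05:RelVirtLoc} applies directly in the relative setting and transports to the logarithmic framework via \cite{AMW14:Comparison}, or alternatively via the torus localisation formalism of \cite{MR19:LocalizeLog}. The fibre-product constraint in \eqref{eq: confined evaluation} cutting out $F$ is harmless, as $F$ is itself $\gpfont{T}_{\msf{f}}$-fixed and simply pins down the evaluations of the $x_i$.
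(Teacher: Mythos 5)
Your proposal is correct and follows essentially the same route as the paper: the lemma there is just the summary of the preceding analysis, which characterises a point of the simple fixed locus by partially normalising along the nodes over $q$ and identifying the components $C_i$ carrying the relative markings as totally ramified degree $c_i$ covers of $\ell$ (the substance being that confinement of the evaluations to $F$ together with the negativity of $\mathcal{O}_S(-D_2)|_{\ell}$ forces $f$ to factor through the zero section, so the image of each $C_i$ is $\ell$ and the second ramification point is $q$). Your explicit gluing/normalisation pair and the identification of $\prod_{i}\mathbb{Z}_{c_i}$ with the deck transformations of these covers is exactly the Graber--Vakil picture the paper invokes, so I have nothing to add beyond noting that the "analysis carried out before the statement" you defer to is where the real work lives.
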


Using this description of the simple fixed locus we are now able to determine its contribution to the Gromov--Witten invariant.

\begin{prop}
	\label{prop: contrib simple locus}
	We have
	\begin{equation}
	\label{eq: contrib simple locus}
	\begin{split}
		\deg_{\gpfont{T}_{\msf{f}}} \frac{1}{e^{\gpfont{T}_{\msf{f}}}(N^{\vir}_{\simple})}\cap [\Mbar_{g,\mathbf{c},\upbeta}(Y \vertspace D)\lvert_{F}^{\simple}]^{\vir}_{\gpfont{T}_{\msf{f}}} & = (-1)^{D_1 \cdot \upbeta} \prod_{i=1}^n (-1)^{f c_i} \frac{\textstyle{\prod_{k=1}^{c_i-1}(f c_i +k)}}{\msf{u} \cdot c_i!} \\
		& \hspace{3em} \times \int_{[\Mbar_{g,n,\upbeta^{\prime}}(\torX)^{\gpfont{T}_{\msf{f}}}]^{\vir}_{\gpfont{T}_{\msf{f}}}} \frac{1}{e^{\gpfont{T}_{\msf{f}}}(N^{\vir})} \, \prod_{i=1}^n \frac{\mr{ev}_i^{\cstar} \upphi}{\tfrac{\msf{u}}{c_i}-\uppsi_i}
	\end{split}
	\end{equation}
	where $\upphi$ is the $\gpfont{T}_{\msf{f}}$-equivariant Poincar\'{e} dual of $q$.
\end{prop}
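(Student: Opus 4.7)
The plan is to compute the left-hand side via $\gpfont{T}_{\msf{f}}$-equivariant virtual localisation on $\Mbar_{g,\mathbf{c},\upbeta}(Y \vertspace D)\lvert_F^{\simple}$ and to match the resulting contributions with the right-hand side of \eqref{eq: contrib simple locus}. Using the description \eqref{eq: simple fixed locus}, every $\gpfont{T}_{\msf{f}}$-fixed stable map in this locus decomposes into an inner stable map $f|_{C'}: C' \to \torX$ whose markings $y_1, \ldots, y_n$ are constrained to lie at $q$, joined through the nodes $y_i$ to rails $C_i$ which are fully ramified degree $c_i$ covers of $\ell$ stretched between $p$ and $q$; the residual automorphisms form the group $\prod_i \mbb{Z}_{c_i}$.

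Following \cite{GV05:RelVirtLoc}, the next step is to decompose the virtual normal bundle $N^{\vir}_{\simple}$ into three pieces, corresponding to deformations of the inner component, of each rail, and of each gluing node. The inner piece contributes the factor $1/e^{\gpfont{T}_{\msf{f}}}(N^{\vir})$ of the non-relative moduli $\Mbar_{g,n,\upbeta^{\prime}}(\torX)$. The edge contribution for each rail $C_i$ is the familiar computation for a maximally ramified cover in a framed toric Calabi--Yau threefold (see e.g.\ \cite[Section~3]{LLLZ09:MathTopVert}), producing the factor
\begin{equation*}
(-1)^{fc_i} \, \frac{\prod_{k=1}^{c_i-1}(fc_i + k)}{\msf{u} \cdot c_i!}
\end{equation*}
after dividing by the $\mbb{Z}_{c_i}$ automorphisms. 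Node smoothing at $y_i$ supplies $1/(\msf{u}/c_i - \uppsi_i)$, because $T_{y_i} C_i$ carries weight $\msf{u}/c_i$.

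What remains is to account for the $\mr{ev}_i^{\cstar} \upphi$ insertions and the overall sign $(-1)^{D_1 \cdot \upbeta}$. Both should emerge from carefully unwinding the Gysin pullback along $F^n \hookrightarrow D^n$ over the fixed locus: since each $\mr{ev}_i$ sends $x_i$ to $p$, this pullback together with the identity $c_1^{\gpfont{T}}(T_p D_1) = -\msf{f}$ from \eqref{eq: T weight on D1} should, after transport through the rail $C_i$ to the node $y_i$, be equivalent to inserting the $\gpfont{T}_{\msf{f}}$-equivariant Poincar\'{e} dual $\upphi$ of $q$ at $y_i$ on the inner moduli. The sign $(-1)^{D_1 \cdot \upbeta} = (-1)^{\sum_i c_i}$ is anticipated to arise from the negative tangent weights along $\ell$ and $D_1$ at $p$ combined with the ramification factors $c_i$ on each rail. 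The most delicate step in the plan will be this final reorganisation: carefully tracking how all equivariant weights combine so that the evaluation-to-$F$ constraint translates into $\prod_i \mr{ev}_i^{\cstar} \upphi$ on the inner moduli, and verifying that the several sources of signs assemble precisely into $(-1)^{D_1 \cdot \upbeta}$.
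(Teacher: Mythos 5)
Your proposal is correct and follows essentially the same route as the paper: the fixed-locus description \eqref{eq: simple fixed locus}, the inner/rail/node decomposition of $N^{\vir}_{\simple}$, the $\prod_i 1/c_i$ from the quotient, and node-smoothing producing the $\tfrac{\msf{u}}{c_i}-\uppsi_i$ denominators. The only difference is presentational: where you cite the ``familiar'' framed-cover computation and leave the sign as anticipated, the paper multiplies out three explicit Euler-class factors (for $f_i^{\cstar}T^{\mr{log}}_{\ell|p}$, $f_i^{\cstar}N_\ell S$ and $f_i^{\cstar}\cO_S(-D_2)$), and the sign $(-1)^{D_1\cdot\upbeta}=(-1)^{\sum_i c_i}$ drops out of that product directly rather than from the Gysin pullback along $F^n\hookrightarrow D^n$, whose only cohomological residue is the constraint $\mr{ev}_i^{\cstar}\upphi$ at $q$.
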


\begin{proof}
	With our characterisation \eqref{eq: simple fixed locus} of the simple fixed locus it remains to analyse the difference between $N^{\vir}_{\simple}$ and the virtual normal bundle $N^{\vir}$ of the fixed locus
	\begin{equation*}
		\Mbar_{g,n,\upbeta^{\prime}}(\torX)^{\gpfont{T}_{\msf{f}}} \hookrightarrow \Mbar_{g,n,\upbeta^{\prime}}(\torX)\,.
	\end{equation*}
	First of all note that we get an overall factor
	\begin{equation*}
		\prod_{i=1}^n \frac{1}{c_i}
	\end{equation*}
	from the quotient. Moreover, partially normalising along the nodes $y_1,\ldots,y_n$ connecting $C'$ with $C_1,\ldots,C_n$ we see that each degree $c_i$ cover
	\begin{equation*}
		f_i \coloneqq f\lvert_{C_i} ~~ :~~ C_i \rightarrow \ell
	\end{equation*}
	contributes with factors
	\begin{align*}
		\frac{1}{e^{\gpfont{T}_{\msf{f}}}\Big(\hhh^\bullet \big(C_i , f_i^{\cstar} T^{\mr{log}}_{\ell|p} \big)^{\mr{mov}}\Big)} & = \frac{1}{c_i !} \bigg(\frac{\msf{u}}{c_i}\bigg)^{-c_i} \\
		\frac{1}{e^{\gpfont{T}_{\msf{f}}}\Big(\hhh^\bullet \big(C_i , f_i^{\cstar} N_{\ell}{S} \big)^{\mr{mov}}\Big)} & = \frac{1}{(-(f+1)c_i)!}  \bigg(\frac{\msf{u}}{c_i}\bigg)^{(f+1) c_i}\\
		\frac{1}{e^{\gpfont{T}_{\mathsf{f}}} \Big( \hhh^\bullet \big(C_i, f_i^{\cstar}  \mathcal{O}_{{S}}(-D_2) \big)^{\mr{mov}}\Big)} & = (-1)^{fc_i + 1} \, (-fc_i-1)! \bigg(\frac{\msf{u}}{c_i}\bigg)^{-f c_i-1}
	\end{align*}
	assuming that $f = \deg \cO_{{S}}(-D_2)\lvert_{\ell} < 0$. Note that if we multiply all these factors together we indeed obtain the top line on the right-hand side of \eqref{eq: contrib simple locus}. The remaining case $f = 0$ can be treated similarly. This is the only case left to check since $f\leq 0$ due to our assumption that $D_2$ is smooth and $D_2\cdot D_2 \geq 0$. Indeed, from these assumptions it follows that $\ell$ intersects $D_2$ non-negatively (as does any smooth curve in $S$) and so $f = - D_2 \cdot \ell \leq 0$.
	
	Regarding the second line in \eqref{eq: contrib simple locus} let us only remark that the insertions
	\begin{equation*}
		\tfrac{\msf{u}}{c_i}-\uppsi_i
	\end{equation*}
	for $i\in\{1,\ldots,n\}$ come from smoothings of the nodes $y_1,\ldots,y_n$.
\end{proof}

\subsection{Vanishing of composite contributions}
\label{sec: composite locus}
We observe that the integral on the right-hand side of equation \eqref{eq: log 3fold to open} is the result of computing the one in \eqref{eq: contrib simple locus} via $\gpfont{T}$-virtual localisation. Hence, to prove \Cref{thm: log open no interior marking} it suffices to show

\begin{prop}
	\label{prop: composite contrib vanishes}
	We have
	\begin{equation*}
		\deg_{\gpfont{T}_{\msf{f}}} \frac{1}{e^{\gpfont{T}_{\msf{f}}}(N^{\vir}_{\mathrm{com}})}\cap [\Mbar_{g,\mathbf{c},\upbeta}(Y \vertspace D)\lvert_{F}^{\mathrm{com}}]^{\vir} = 0\,.
	\end{equation*}
\end{prop}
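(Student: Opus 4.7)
The plan is to follow the Graber--Vakil framework of relative virtual localisation \cite{GV05:RelVirtLoc}: the composite contribution will factor into a bottom and a rubber piece, and the rubber piece will vanish for standard rubber-calculus reasons.

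First I would describe $\Mbar_{g,\mathbf{c},\upbeta}(Y \vertspace D)\lvert_{F}^{\mr{com}}$ explicitly. Following Li's theory of stable maps to expanded degenerations \cite{Li01:RelStabMaps}, a general point parametrises a map to $Y[k]$ for some $k \geq 1$, which decomposes as a bottom map $f_0 \colon C_0 \to Y$ with relative profile $\upmu$ along $D$, glued to a rubber map $f_1 \colon C_1 \to \tilde Y$ to the chain of $k$ bubbles $\tilde Y$, with matching profile $\upmu$ at $D_0$ and profile $\mathbf{c}$ at $D_\infty$. Each bubble is a $\mathbb{P}^1$-bundle over $D$; because $D$ is pointwise $\gpfont{T}_{\msf{f}}$-fixed --- a consequence of \eqref{eq: T weight on D1} --- the components of $C_1$ carrying the markings $x_i$ are forced to be degree-$c_i$ covers of $\mathbb{P}^1$-fibres over points $d_i \in D$, and the evaluation constraint further forces $d_i \in F$. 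The Graber--Vakil formula then writes the composite contribution as a weighted sum, over $k$ and profiles $\upmu$, of products (bottom integral)\,$\times$\,(rubber integral), with combinatorial prefactors and gluing $\uppsi$-class insertions at the nodes connecting $C_0$ and $C_1$.

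The decisive step is the vanishing of the rubber factor. Because $D$ is pointwise $\gpfont{T}_{\msf{f}}$-fixed and every $\gpfont{T}_{\msf{f}}$-fixed component of $C_1$ is a fibre cover, the rubber $\mathbb{C}^*$ scaling each bubble restricts trivially to the fixed stack; equivalently, the rubber cotangent line $\uppsi^{\rubber}$ at $D_\infty$ is trivial as a $\gpfont{T}_{\msf{f}}$-representation. Since no $\uppsi^{\rubber}$ insertion appears in the integrand --- the factors $\prod \mr{ev}_i^{\cstar}[F]$ and the node $\uppsi$-classes from the gluing are all pulled back from non-rubber moduli --- the rubber integral vanishes by the standard rubber-vanishing argument of \cite[Section~1.5]{GV05:RelVirtLoc}, ultimately resting on \cite{FP00}. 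I expect the main obstacle to be a careful decomposition of $N^{\vir}_{\mr{com}}$, verifying that the evaluation insertions and node $\uppsi$-classes do not secretly generate a rubber $\uppsi^{\rubber}$ through the normal-bundle contributions; once this bookkeeping is complete, the vanishing follows formally.
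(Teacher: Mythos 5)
Your setup is the same as the paper's (Graber--Vakil relative localisation, with the composite locus split into bottom pieces over $(Y\vertspace D)$ and rubber pieces over $(Z\vertspace D_0+D_\infty)$), but the step you call decisive contains a genuine gap. The assertion that the rubber factor vanishes ``because no $\uppsi^{\rubber}$ insertion appears in the integrand'' is not a valid argument: the rubber cotangent class is an honest (non-equivariant) cohomology class on $\Mbar_{\Gamma_Z}(Z\vertspace D_0+D_\infty)^{\rubber}$, and the fact that the rubber $\Gm$ or the torus $\gpfont{T}_{\msf{f}}$ acts trivially does not make it, or the integral, zero. There is no general principle by which rubber integrals without target $\uppsi$-insertions vanish --- composite terms in relative localisation computations are generically nonzero --- so the vanishing here must come from the specific geometry. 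You also assume from the outset that the rubber components are multiple covers of $\bP^1$-fibres of $Z\rightarrow D$; this is not forced, and the configurations where a $Z$-vertex carries a curve class with nontrivial horizontal part are exactly the ones the paper must kill first (\Cref{lem: GNS23 lemma}), by exhibiting a trivial line in the obstruction bundle coming from $\cO_S(-D_2)$, not by a fixed-locus analysis.

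What the paper actually does is a four-step reduction, and each step uses a different mechanism: (i) non-fibre classes on $Z$-vertices are killed by the trivial obstruction factor just mentioned; (ii) a $Z$-vertex with two or more legs, or two or more edges, is killed by excess intersection --- the relevant evaluation maps factor through a diagonal, and the excess bundle is ($\gpfont{T}_{\msf{f}}$-equivariantly) trivial because it lives on $F^{n-1}\cong\Aaff{n-1}$, so its Euler class vanishes (\Cref{prop: composite unique Dinfty leg}, \Cref{prop: composite unique D0 leg}); (iii) positive genus on a $Z$-vertex is killed by $\uplambda_{\mr{top}}^2=0$ after restricting the rubber target to a single $\bP^1$-fibre (\Cref{prop: higher genus vanishing}); (iv) only for the surviving star-shaped, genus-zero, one-edge-one-leg graphs does a dimension count enter, giving $\vd\,\Mbar_{\Gamma_Z}(\bP^1\vertspace 0+\infty)^{\rubber}=-1$ and hence the vanishing. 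Your proposal compresses (i)--(iv) into a single unsubstantiated claim; to repair it you would need to supply, at minimum, the obstruction-bundle argument for non-fibre classes and the excess-intersection and $\uplambda_{\mr{top}}$ arguments before any dimension count can close the proof.
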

The proof of this \namecref{prop: composite contrib vanishes} necessitates a careful analysis of the composite fixed locus for which we will closely follow \cite[Section 3]{GV05:RelVirtLoc}.

\subsubsection{The setup}
We first set the notation required in this section. Denote by
\begin{equation}
	\label{eq: proj compl Z}
	Z \coloneqq \bP_{D} \big( \cO_D \oplus N_{D}Y\big)
\end{equation}
the projective completion of $N_{D}Y$ and write $D_0$ and $D_\infty$ for its zero and infinity section. Note that $Z$ is a line bundle over
\begin{equation}
	\label{eq: proj compl P}
	P \coloneqq \bP_{D_1} \big( \cO_{D_1} \oplus N_{D_1}{S}\big)\,.
\end{equation}
More precisely, we have $Z\cong \cO_{P}(-2 f)$ where $f$ is a fibre of $P \rightarrow D_1$.

By definition, the composite fixed locus parametrises relative stable maps whose target is an expanded degeneration of $(Y \vertspace D)$ which is the result of gluing $(Y \vertspace D)$ and multiple copies of $(Z \vertspace D_0 + D_\infty)$ to form an accordion. The composite locus decomposes into a disjoint union of components labelled by certain decorated graphs indicating how a relative stable map distributes over the expanded target.

\begin{defn} \label{defn: splitting type}
	A \define{splitting type} $\Gamma$ is the data of
	\begin{enumerate}
		\item a bipartite graph with vertices $V(\Gamma)$, edges $E(\Gamma)$ and legs $L(\Gamma)$;

		\item an assignment of a curve class $\upbeta_v$ to every vertex $v\in V(\Gamma)$;

		\item a genus label $g_v\in \mbb{Z}_{\geq 0}$ for every vertex $v\in V(\Gamma)$;

		\item a contact order $d_e \in \mbb{Z}_{> 0}$ assigned to every edge $e\in E(\Gamma)$;

		\item a labelling $\{1,\ldots,n\} \cong L(\Gamma)$.
	\end{enumerate}
	This data has to be compatible with $(g,\mathbf{c},\upbeta)$ in the usual way. Vertices are separated into $Y$- and $Z$-vertices and we will write $\Gamma_Y$ and $\Gamma_Z$ for the type of relative stable maps (with possibly disconnected domain) to $(Y \vertspace D)$, respectively $(Z \vertspace D_0+D_\infty)$, obtained by cutting $\Gamma$ along its edges. All legs are adjacent to $Z$-vertices.
\end{defn}

With this notation the composite locus can be written as a union of disconnected components labelled by splitting types:
\begin{equation*}
	\Mbar_{g,\mathbf{c},\upbeta}(Y \vertspace D)^{\mathrm{com}} = \bigsqcup_{\Gamma} \, \Fbar_{\Gamma}\,.
\end{equation*}
To characterise the component labelled by a fixed splitting type $\Gamma$, let us denote by
\begin{equation*}
	\Mbar_{\Gamma_Y}(Y \vertspace D)^{\simple}
\end{equation*}
the simple $\gpfont{T}_{\msf{f}}$-fixed locus of relative stable maps of type $\Gamma_Y$ and write
\begin{equation*}
	\Mbar_{\Gamma_Z}(Z \vertspace D_0+D_\infty)^{\rubber}
\end{equation*}
for the moduli stack of type $\Gamma_Z$ relative stable maps to non-rigid $(Z \vertspace D_0+D_\infty)$ and form a substack with constrained evaluations at marking legs
\begin{equation*}
	\begin{tikzcd}
		\Mbar_{\Gamma_Z}(Z \vertspace D_0+D_\infty)\lvert_{F}^{\rubber} \arrow[r, hook] \arrow[d] \ar[rd,phantom,"\square",start anchor=center,end anchor=center] & \Mbar_{\Gamma_Z}(Z \vertspace D_0+D_\infty)^{\rubber} \arrow[d, "\Pi_i \mr{ev}_{i}"]\\
		F^n \arrow[r, hook] & D_\infty^n\,.
	\end{tikzcd}
\end{equation*}
The two moduli stacks support evaluation morphisms to $D \cong D_0$ associated to the edges of $\Gamma$. Let
$\Delta$ denote the diagonal $D^{E(\Gamma)} \hookrightarrow D^{E(\Gamma)}\times D^{E(\Gamma)}$ and form the fibre product
\begin{equation}
	\label{eq: defn composite locus}
	\begin{tikzcd}
		\Mbar_{\Gamma} \arrow[r, hook] \arrow[d] \ar[rd,phantom,"\square",start anchor=center,end anchor=center] & \Mbar_{\Gamma_Y}(Y \vertspace D)^{\simple} \times \Mbar_{\Gamma_Z}(Z \vertspace D_0+D_\infty)\lvert_{F}^{\rubber} \arrow[d]\\
		D^{E(\Gamma)} \arrow[r, hook, "\Delta"] & D^{E(\Gamma)}\times D^{E(\Gamma)} \,.
	\end{tikzcd}
\end{equation}
Then the connected component $\Fbar_{\Gamma}$ of the composite fixed locus is the quotient of $\Mbar_{\Gamma}$ by $\Aut(d_e)$ and moreover, by \cite[Lemma 3.2]{GV05:RelVirtLoc}, the Gysin pullback
\begin{equation*}
	[\Mbar_{\Gamma}]^{\vir} \coloneqq \Delta^! \big( [\Mbar_{\Gamma_Y}(Y \vertspace D)^{\simple}]^{\vir} \times [\Mbar_{\Gamma_Z}(Z \vertspace D_0+D_\infty)\lvert_{F}^{\rubber}]^{\vir} \big)
\end{equation*}
pushes forward to $[\Fbar_{\Gamma}]^{\vir}$ under the quotient morphism. This means we get
\begin{equation}
	\label{eq: comp locus contrib rewritten}
	\deg_{\gpfont{T}_{\msf{f}}} \frac{1}{e^{\gpfont{T}_{\msf{f}}}(N^{\vir}_{\mathrm{com}})}\cap [\Mbar_{g,\mathbf{c},\upbeta}(Y \vertspace D)\lvert_{F}^{\mathrm{com}}]^{\vir}_{\gpfont{T}_{\msf{f}}} = \sum_{\Gamma} \frac{1}{|\Aut(d_e)|}~ \deg_{\gpfont{T}_{\msf{f}}} \frac{1}{e^{\gpfont{T}_{\msf{f}}}(N^{\vir}_{\Gamma})}\cap [\Mbar_{\Gamma}]^{\vir}\,.
\end{equation}

\subsubsection{First vanishing: restricted curve classes}
\label{sec: composite 1st vanishing}
To prove \Cref{prop: composite contrib vanishes} we will show that each term in the sum \eqref{eq: comp locus contrib rewritten} vanishes in several steps. We first recall a lemma from \cite{GNS23:bicyclic}.
\begin{lem}
	\label{lem: GNS23 lemma}
	\cite[Section 2.3.3]{GNS23:bicyclic}
	If there is a $Z$-vertex $v\in V(\Gamma)$ such that $\upbeta_v$ is not a multiple of a fibre of $P\rightarrow D_1$, then
	\begin{equation}
		\label{eq: single term comp locus}
		\deg_{\gpfont{T}_{\msf{f}}} \frac{1}{e^{\gpfont{T}_{\msf{f}}}(N^{\vir}_{\Gamma})}\cap [\Mbar_{\Gamma}]^{\vir} = 0\,.
	\end{equation}
\end{lem}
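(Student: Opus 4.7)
My plan is to exhibit, for every splitting type $\Gamma$ containing a $Z$-vertex $v$ with $\upbeta_v$ not a multiple of a fibre class of $P \to D_1$, a trivial $\gpfont{T}_{\msf{f}}$-equivariant summand in the obstruction bundle underlying $[\Mbar_\Gamma]^{\vir}$. Such a trivial factor produces a zero weight in $e^{\gpfont{T}_{\msf{f}}}(N^{\vir}_\Gamma)$ and, upon restriction to the framing subtorus implicit in the $\gpfont{T}_{\msf{f}}$-degree map, forces the contribution \eqref{eq: single term comp locus} to vanish.

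The starting point is the iterated fibre structure of the rubber target recalled in \eqref{eq: proj compl Z}--\eqref{eq: proj compl P}: $Z$ is the total space of $\cO_P(-2f) \to P$ and $P$ is a $\bP^1$-bundle over $D_1$. The hypothesis on $\upbeta_v$ means exactly that the composite map $C_v \to Z \to P \to D_1$ is non-constant of some positive degree. I would split the virtual tangent complex of the rubber map to $(Z \vertspace D_0 + D_\infty)$ according to this tower and track the $\gpfont{T}_{\msf{f}}$-weights on each summand using the calibration \eqref{eq: T weight on D1}, which tells us that restriction to $\gpfont{T}_{\msf{f}}$ sets precisely the weight $\msf{f}$ (the $\gpfont{T}$-weight on the fibres of $Y \to S$ at $p$) to zero.

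The key observation is then the following. Because $(Y, D)$ is log Calabi--Yau --- one checks directly that $K_Y + D = \pi^*(K_S + D_2) + \pi^*D_1 = \pi^*(-D_1) + \pi^*D_1 = 0$ --- the deformation direction coming from the normal bundle $N_D Y$ pulled back to $C_v$ carries weight $\msf{f}$. Non-constancy of $C_v \to D_1$ ensures that the corresponding cohomology group on the source curve has positive rank, producing a non-trivial summand with weight $\msf{f}$ in the obstruction theory; this summand restricts to a trivial $\gpfont{T}_{\msf{f}}$-factor, which is precisely the vanishing we need. The main obstacle is the careful bookkeeping of the contributions from the normal bundles of $D_0$ and $D_\infty$ in $Z$, the target psi-class coming from the rubber automorphisms, and the smoothings of the nodes across edges of $\Gamma$, in order to cleanly isolate this trivial factor; for this I would follow the detailed analysis carried out in \cite[Section~2.3.3]{GNS23:bicyclic}.
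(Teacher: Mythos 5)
There is a genuine gap here, on two counts. First, you misidentify the direction carrying the weight $\msf{f}$. The normal bundle $N_DY\cong\uppi^{\cstar}N_{D_1}S\lvert_D$ is the direction transverse to $D$ inside $Y$ (the $\ell$-direction), and from \Cref{fig: toric diagram} it carries weight $-\msf{u}$ at $p$, which does \emph{not} restrict to zero on $\gpfont{T}_{\msf{f}}$; indeed $\gpfont{T}_{\msf{f}}$ must act non-trivially on the rubber fibres for the Graber--Vakil analysis to apply. The direction with weight $\msf{f}$ at $p$ is instead the fibre direction of $Z\rightarrow P$, i.e.\ the pullback of $\cO_S(-D_2)\lvert_{D_1}\cong\upomega_{D_1}$, and it is the negativity of \emph{this} bundle on curves with positive degree over $D_1$ that produces the relevant obstructions (for $N_DY$ the cohomology need not even have positive rank, as $\deg N_{D_1}S=D_1\cdot D_1$ is unconstrained). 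Second, and more seriously, your mechanism does not force the vanishing: a summand of the obstruction theory on which $\gpfont{T}_{\msf{f}}$ acts with weight zero is by definition part of the \emph{fixed} obstruction theory, so it never enters $e^{\gpfont{T}_{\msf{f}}}(N^{\vir}_\Gamma)$; it contributes its ordinary, non-equivariant Euler class to $[\Mbar_\Gamma]^{\vir}$, and a positive-rank bundle of weight zero over a positive-dimensional fixed locus need not have vanishing Euler class. In short, ``zero weight'' is not the same as ``trivial bundle''.

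What actually kills the contribution --- and what the remark following the lemma alludes to --- is an honest trivial quotient of the obstruction bundle. On the subcurve of $C_v$ dominating $D_1$ the differential induces $f_v^{\cstar}\upomega_{D_1}\rightarrow\upomega_{C_v}$ with cokernel supported on points, hence a canonical surjection $H^1(C_v,f_v^{\cstar}\upomega_{D_1})\twoheadrightarrow H^1(C_v,\upomega_{C_v})\cong\bC$ with trivial torus action: a cosection. (This is the same mechanism as the vanishing of virtual classes of stable maps to $D_0\cong\Tot\cO_{\bP^1}(-2)$ in non-fibre classes used in \Cref{lem: Zvertices fibre class}.) To exhibit it one localises further with respect to the full torus $\gpfont{T}$, as the paper indicates, and identifies this trivial line in the obstruction space at each $\gpfont{T}$-fixed locus. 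Your weight bookkeeping along the tower $Z\rightarrow P\rightarrow D_1$ is a sensible first step, but without the cosection, and with the $N_DY$ versus $T_{Z/P}$ mix-up corrected, it does not yield \eqref{eq: single term comp locus}.
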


\begin{rmk}
	The above lemma was proven in \cite{GNS23:bicyclic} under the assumption that $D_1\cdot \upbeta = 0$ but it is straight forward to generalise the proof to the case at hand where $D_1\cdot \upbeta >0$. The key observation is that after localising the left-hand side of \eqref{eq: single term comp locus} with respect to $\gpfont{T}$ one can identify a trivial factor in the obstruction bundle guaranteeing the vanishing.
\end{rmk}

\subsubsection{Second vanishing: star shaped graphs}
\label{sec: composite 2nd vanishing}
From now on we may assume that every $Z$-vertex $v$ is decorated with curve class $\upbeta_v$ which is a multiple of a fibre of $P\rightarrow D_1$.

The following results are inspired by vanishing arguments appearing in the localisation calculations \cite{LY22:OpenClosed,LLLZ09:MathTopVert,LLZ03:MarinoVafaFormula}. We will present more geometric versions of these arguments here.

\begin{lem}
	\label{prop: composite unique Dinfty leg}
	Unless every vertex in $\Gamma_Z$ carries exactly one leg we have
	\begin{equation*}
		[\Mbar_{\Gamma_Z}(Z \vertspace D_0+D_\infty)\lvert_{F}^{\rubber}]^{\vir} = 0\,.
	\end{equation*}
\end{lem}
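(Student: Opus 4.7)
The plan is to analyse the image of a stable map parametrised by $\Mbar_{\Gamma_Z}(Z \vertspace D_0+D_\infty)\lvert_F^{\rubber}$ under the standing assumption of \Cref{sec: composite 2nd vanishing} that every $Z$-vertex $v$ has curve class $\upbeta_v = d_v \cdot [\text{fibre of }P\to D_1]$ with $d_v \geq 1$, and to extract the vanishing from the observation that the evaluations at distinct legs attached to a common $Z$-vertex must coincide.

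First I would pin down the image of the component $C_v$ of the source curve corresponding to a $Z$-vertex $v$. Since $\upbeta_v$ projects to a multiple of a $P$-fibre under $Z \to P$, the image of $C_v$ in $P$ lies in a single fibre $\bP^1_v$ of $P \to D_1$. Using $Z \cong \cO_P(-2f)$ together with the vanishing self-intersection of a fibre of $P \to D_1$, the restriction $Z\lvert_{\bP^1_v}$ is a trivial $\bA^1$-bundle, so $Z\lvert_{\bP^1_v} \cong \bP^1_v \times \bA^1$. Projectivity of $C_v$ forces the $\bA^1$-coordinate of the map to be constant, so the image lies in a single $P$-fibre $\bP^1_v \times \{q_v\} \hookrightarrow Z$, and $C_v \to \bP^1_v$ is a degree $d_v$ cover. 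This $P$-fibre meets $D_\infty$ in a unique point of $D_\infty \cong D$, and all legs of $v$ (being tangency markings at $D_\infty$) must evaluate to this single point. Incidentally, since $d_v \geq 1$, every $Z$-vertex is automatically forced to carry at least one leg, so the lemma is really an assertion about vertices carrying two or more legs.

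Suppose now $v$ carries two distinct legs $\ell_1$ and $\ell_2$. The universal version of the preceding image analysis shows that $\mr{ev}_{\ell_1} = \mr{ev}_{\ell_2}$ as morphisms $\Mbar_{\Gamma_Z}(Z \vertspace D_0 + D_\infty)^{\rubber} \to D$, and consequently the combined evaluation factors through the diagonal $\Delta: D \hookrightarrow D \times D$. Pulling back $[F \times F]$ along the two leg evaluations therefore yields
\begin{equation*}
	(\mr{ev}_{\ell_1}, \mr{ev}_{\ell_2})^{\cstar}[F \times F] \;=\; \mr{ev}_{\ell_1}^{\cstar} \Delta^{\cstar}[F \times F] \;=\; \mr{ev}_{\ell_1}^{\cstar}\bigl([F] \cdot [F]\bigr).
\end{equation*}
Because $F$ is a fibre of the line bundle $D \to D_1$, its normal bundle $N_F D$ is trivial, and the self-intersection formula produces $[F] \cdot [F] = (\iota_F)_{\cstar} c_1(N_F D) = 0$ in $A_{\cstar}(D)$. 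This factor already annihilates the Gysin pullback $[F^n \hookrightarrow D^n]^!$ applied to $[\Mbar_{\Gamma_Z}^{\rubber}]^{\vir}$, which proves $[\Mbar_{\Gamma_Z}\lvert_F^{\rubber}]^{\vir} = 0$.

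The main obstacle is upgrading the pointwise coincidence $\mr{ev}_{\ell_1}(f) = \mr{ev}_{\ell_2}(f)$ to a genuine equality of stack morphisms, which requires propagating the image analysis across the universal family of stable maps over $\Mbar_{\Gamma_Z}^{\rubber}$ and appealing to separatedness of the target $D$. Once this rigidity is established the factorisation through the diagonal is formal and the self-intersection vanishing is automatic, which together with \Cref{lem: GNS23 lemma} reduces the composite fixed loci contributing to \eqref{eq: comp locus contrib rewritten} to those in which every $Z$-vertex carries precisely one leg.
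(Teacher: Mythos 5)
Your argument is essentially the paper's: both deduce from the fibre-class assumption and the triviality of $Z\to P$ over a fibre of $P \to D_1$ that the evaluations at two legs attached to a common $Z$-vertex factor through the diagonal of $D_\infty$, and both then obtain the vanishing from the triviality of the normal bundle of $F$ in $D_\infty$ --- the paper packages this last step as the excess intersection formula with a trivial rank-one excess bundle, which is exactly your $[F]\cdot[F]=0$. The only point you omit is that, since the virtual class here is $\gpfont{T}_{\msf{f}}$-equivariant, one must also check that this excess/normal line bundle is \emph{equivariantly} trivial; this holds because $\gpfont{T}_{\msf{f}}$ acts trivially on $D_\infty$ and $F$.
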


\begin{proof}
	Clearly, every $Z$-vertex $v$ has to have at least one leg for the moduli stack to be non-empty since $D_\infty \cdot \upbeta_v>0$. So suppose there is some vertex with at least two legs. Since the curve class carried by this vertex is a fibre class and the restriction of the line bundle $Z\rightarrow P$ to a fibre trivialises, the product of the evaluation morphisms of these two markings factors through the diagonal $D_\infty \rightarrow D_\infty^2$. Hence, we have the following commuting diagram with Cartesian squares:
	\begin{equation*}
	\begin{tikzcd}
		\Mbar_{\Gamma_Z}(Z \vertspace D_0+D_\infty)\lvert_{F}^{\rubber} \arrow[r,hook] \arrow[d,"g"] \ar[rd,phantom,"\square",start anchor=center,end anchor=center]& \Mbar_{\Gamma_Z}(Z \vertspace D_0+D_\infty)^{\rubber} \arrow[d] \\
		F^{n-1} \arrow[r,hook,"\overline{\upeta}"] \arrow[d, "f"] \ar[rd,phantom,"\square",start anchor=center,end anchor=center] & D_\infty^{n-1} \arrow[d] \\
		F^{n} \arrow[r,hook,"\upeta"] & D_\infty^{n}\,.
	\end{tikzcd}
	\end{equation*}
	Note that the excess intersection bundle
	\begin{equation*}
		E = f^{\cstar} N_{F^{n}} D_\infty^{n} \big/ N_{F^{n-1}} D_\infty^{n-1}
	\end{equation*}
	is trivial as is every vector bundle on $F^{n-1} \cong \Aaff{n-1}$. Moreover, note that since $\gpfont{T}_{\msf{f}}$ is acting trivially on $D_\infty$ and $F$, this bundle is also $\gpfont{T}_{\msf{f}}$-equivariantly trivial. Therefore, by the excess intersection formula \cite[Theorem 6.3]{Fulton} we have
	\begin{equation*}
		[\Mbar_{\Gamma_Z}(Z \vertspace D_0+D_\infty)\lvert_{F}^{\rubber}]^{\vir} = \upeta^! [\Mbar_{\Gamma_Z}(Z \vertspace D_0+D_\infty)^{\rubber}]^{\vir} = c_1^{\gpfont{T}_{\msf{f}}} (g^{\cstar} E) \cap \overline{\upeta}^![\Mbar_{\Gamma_Z}(Z \vertspace D_0+D_\infty)^{\rubber}]^{\vir} = 0\,.\qedhere
	\end{equation*}
\end{proof}

As another consequence of our assumption that all curve classes of $Z$-vertices are multiples of fibre classes the evaluation morphism associated to edges
\begin{equation*}
	\Mbar_{\Gamma_Z}(Z \vertspace D_0+D_\infty)\lvert_{F}^{\rubber} \longrightarrow D_0^{E(\Gamma)}
\end{equation*}
factors through $F^{E(\Gamma)} \hookrightarrow D_0^{E(\Gamma)}$. If we combine this with the fact that the evaluation morphism
\begin{equation*}
	\Mbar_{\Gamma_Y}(Y \vertspace D)^{\simple} \rightarrow D^{E(\Gamma)}
\end{equation*}
factors through $D^{E(\Gamma)}_1 \hookrightarrow D^{E(\Gamma)}$ we may restrict the evaluations to the intersection point $p = F \cap D_1$. We get the following commuting diagram with Cartesian squares:
\begin{equation}
\label{eq: big commuting diagram}
\begin{tikzcd}
	\Mbar_{\Gamma_Y}(Y \vertspace D)\lvert_{F}^{\simple}\,\times\, \Mbar_{\Gamma_Z}(\bP^1 \vertspace 0+\infty)^{\rubber} \arrow[d] \arrow[r] \ar[rd,phantom,"\square",start anchor=center,end anchor=center] & \Mbar_{\Gamma_Y}(Y \vertspace D)^{\simple} \times \Mbar_{\Gamma_Z}(Z \vertspace D_0+D_\infty)\lvert_{F}^{\rubber} \arrow[d] \\
	p \arrow[d] \arrow[r] \ar[rd,phantom,"\square",start anchor=center,end anchor=center] & (D_1)^{E(\Gamma)} \times F^{E(\Gamma)} \arrow[d] \\
	D^{E(\Gamma)} \arrow[r, "\quad~~\Delta"] & D^{E(\Gamma)} \times D^{E(\Gamma)} \,.
\end{tikzcd}
\end{equation}
Especially, note that we have a fibre square
\begin{equation}
\label{eq: defn xi inclusion}
\begin{tikzcd}
	\Mbar_{\Gamma_Z}(\bP^1 \vertspace 0+\infty)^{\rubber} \arrow[d] \arrow[r,hook] \ar[rd,phantom,"\square",start anchor=center,end anchor=center] & \Mbar_{\Gamma_Z}(Z \vertspace D_0+D_\infty)\lvert_{F}^{\rubber} \arrow[d] \\
	p \arrow[r,hook,"\xi"]  & F^{E(\Gamma)} \,.
\end{tikzcd}
\end{equation}
First, we record the following vanishing.

\begin{lem}
	\label{prop: composite unique D0 leg}
	We have $[\Mbar_{\Gamma}]^{\vir} = 0$ unless every $Z$-vertex has a single adjacent edge.
\end{lem}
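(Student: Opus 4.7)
The strategy is to mirror the proof of \Cref{prop: composite unique Dinfty leg}, with legs replaced by edges and the divisor $D_\infty$ replaced by $D_0$. Suppose some $Z$-vertex $v \in V(\Gamma)$ has at least two adjacent edges. By the standing assumption from \Cref{sec: composite 1st vanishing}, the curve class $\upbeta_v$ is a positive multiple of a fibre class of $P \to D_1$. Consequently, any relative stable map to $(Z \vertspace D_0+D_\infty)$ with this curve class factors through a single fibre $f_0 \cong \bP^1$ of $P \to D_1$, which meets the zero section $D_0$ transversely in a single point.

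The first key step is geometric: all edge-evaluations at the nodes incident to $v$ coincide set-theoretically, since their images are forced to lie in the single intersection point $f_0 \cap D_0$. I would upgrade this to the statement that, in the notation of diagram~\eqref{eq: big commuting diagram}, the combined edge-evaluation morphism
$$\Mbar_{\Gamma_Z}(Z \vertspace D_0+D_\infty)\lvert_F^{\rubber} \longrightarrow F^{E(\Gamma)}$$
factors through the partial diagonal
$$F^{V^Z(\Gamma)} \longhookrightarrow F^{E(\Gamma)}\,, \qquad (y_v)_{v} \longmapsto (y_{v(e)})_{e}\,,$$
where $V^Z(\Gamma)$ is the set of $Z$-vertices and $v(e) \in V^Z(\Gamma)$ denotes the unique $Z$-vertex incident to the edge $e$.

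The second step feeds this factorisation into an excess-intersection argument exactly as in the proof of \Cref{prop: composite unique Dinfty leg}. The diagonal inclusion $\xi: p \hookrightarrow F^{E(\Gamma)}$ of \eqref{eq: defn xi inclusion} factors through $F^{V^Z(\Gamma)}$, and computing $\xi^! [\Mbar_{\Gamma_Z}(Z \vertspace D_0+D_\infty)\lvert_F^{\rubber}]^{\vir}$ via this intermediate factorisation produces an excess bundle given by the quotient of normal bundles $N_{p/F^{E(\Gamma)}}/N_{p/F^{V^Z(\Gamma)}}$ pulled back to the moduli space. Because $F$ is pointwise fixed by $\gpfont{T}_{\msf{f}}$ (as $c_1^{\gpfont{T}}(T_pF) = \msf{f}$ by~\eqref{eq: T weight on D1}), this excess bundle is $\gpfont{T}_{\msf{f}}$-equivariantly trivial, and its rank equals $|E(\Gamma)| - |V^Z(\Gamma)|$, which is strictly positive whenever some $Z$-vertex carries more than one edge. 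Its equivariant Euler class therefore vanishes, and the excess-intersection formula \cite[Theorem~6.3]{Fulton} forces $[\Mbar_{\Gamma}]^{\vir}=0$.

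The main obstacle is the geometric input in the first step: one has to verify using the description $Z \cong \cO_P(-2f)$ and the embedding $P \hookrightarrow Z$ as the zero section of this line bundle, together with the $\bP^1$-bundle structure $Z \to D$, that every fibre $f_0$ of $P \to D_1$ really meets $D_0$ transversely in a single point. A short local-coordinate calculation near $D_1 \subset S$ makes this transparent and justifies the partial-diagonal factorisation. Once this is in place, the rest of the argument runs in direct parallel to the proof of the previous \namecref{prop: composite unique Dinfty leg}.
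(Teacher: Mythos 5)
Your argument is correct and is exactly the one the paper leaves to the reader: its proof of this lemma simply says to repeat the excess--intersection argument of \Cref{prop: composite unique Dinfty leg} using diagram \eqref{eq: big commuting diagram}, which is what your partial-diagonal factorisation through $F^{V^Z(\Gamma)}\hookrightarrow F^{E(\Gamma)}$ together with the $\gpfont{T}_{\msf{f}}$-equivariant triviality of the positive-rank excess bundle accomplishes. One small imprecision in your geometric input: the image of a fibre-class component need not lie in the zero section $P\subset Z$ itself, only in a fibre of the $\bP^1$-bundle $Z\to D$ (a translate of a fibre of $P\to D_1$ along the line bundle $Z\to P$, since a complete connected curve maps constantly to the affine fibre direction); as such a fibre still meets the section $D_0$ transversely in a single point, your conclusion is unaffected.
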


\begin{proof}
	The \namecref{prop: composite unique D0 leg} can be proven with an argument similar to the one in the proof of \Cref{prop: composite unique Dinfty leg} using diagram \eqref{eq: big commuting diagram}. We leave this to the reader.
\end{proof}

Moreover, comparing our definition of $\Mbar_{\Gamma}$ in \eqref{eq: defn composite locus} with diagram \eqref{eq: big commuting diagram} we learn that
\begin{equation*}
	\Mbar_{\Gamma} \cong \Mbar_{\Gamma_Y}(Y \vertspace D)\lvert_{F}^{\simple}\,\times\, \Mbar_{\Gamma_Z}(\bP^1 \vertspace 0+\infty)^{\rubber} \,.
\end{equation*}
On the level of virtual fundamental classes we find the following identity.

\begin{lem}
	\label{prop: higher genus vanishing}
	If there is a $Z$-vertex $v$ with $g_v>0$ the cycle $[\Mbar_{\Gamma}]^{\vir}$ vanishes. Otherwise, the cycle equates to
	\begin{equation*}
		[\Mbar_{\Gamma_Y}(Y \vertspace D)^{\simple}\lvert_{F}]^{\vir} \times [\Mbar_{\Gamma_Z}(\bP^1 \vertspace 0+\infty)^{\rubber}]^{\vir}\,.
	\end{equation*}
\end{lem}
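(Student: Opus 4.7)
The structural part of the claim is a direct consequence of the two preceding lemmas: by \Cref{prop: composite unique Dinfty leg} and \Cref{prop: composite unique D0 leg} every $Z$-vertex $v$ of $\Gamma$ carries a unique adjacent edge and a unique adjacent leg, and the diagram \eqref{eq: big commuting diagram} then identifies $\Mbar_\Gamma$ with the product $\Mbar_{\Gamma_Y}(Y\vertspace D)|_F^{\simple}\times\Mbar_{\Gamma_Z}(\bP^1\vertspace 0+\infty)^{\rubber}$ as a stack. The content of the lemma therefore lies in the virtual class identity, and the plan is to extract it from a virtual pullback along the closed immersion $\xi$ of \eqref{eq: defn xi inclusion}.

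The preparatory step is to compare the relative perfect obstruction theories on the two sides of $\xi$. A stable map $f\colon C\to\bP^1$ to a fibre of $Z\to D_1$ over $p$, viewed through the inclusion $\bP^1\hookrightarrow Z$, pulls back $T_Z$ to $T_{\bP^1}\oplus f^\cstar N_{\bP^1/Z}$, with $N_{\bP^1/Z}$ a trivial rank two bundle that splits $\gpfont{T}$-equivariantly into two characters. One of these characters accounts for the $D_1$-direction in $P$ and therefore carries weight $-\msf{f}$ by \eqref{eq: T weight on D1}; this is the crucial input, since this weight becomes zero upon restriction to the framing subtorus $\gpfont{T}_{\msf{f}}$.

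Applying virtual pullback along $\xi$ then produces a per-vertex excess contribution of the form $e^{\gpfont{T}_{\msf{f}}}\!\big(R^\bullet\pi_\cstar f^\cstar N_{\bP^1/Z}\big)_v$, which expands as a rational expression whose numerator carries a factor of $-\msf{f}$ and whose denominator involves $e^{\gpfont{T}_{\msf{f}}}(\bE_v^\vee\otimes(-\msf{f}))$ whenever $g_v>0$. Restricting to $\gpfont{T}_{\msf{f}}$ sends the numerator to zero, while the denominator specialises to the non-zero class $c_{g_v}(\bE_v^\vee)=(-1)^{g_v}\uplambda_{g_v}$, thereby killing the excess and hence $[\Mbar_\Gamma]^{\vir}$ itself. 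When every $g_v=0$ the Hodge bundles vanish, the excess combines with the virtual normal bundle of $\xi$ to produce a unit, and what remains is precisely the asserted product of virtual classes.

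The main obstacle will be the careful bookkeeping of the virtual pullback in the presence of both the rubber non-rigidity of $(\bP^1\vertspace 0+\infty)^{\rubber}$ and the log structure along $D_0+D_\infty$. Concretely, one must verify that the excess Euler class coming from $R^\bullet\pi_\cstar f^\cstar N_{\bP^1/Z}$ cancels precisely against the virtual normal bundle of $\xi$ in the $g_v=0$ case, so that no spurious equivariant residue is left over when factoring the virtual class.
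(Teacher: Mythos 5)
Your first step --- identifying $\Mbar_\Gamma$ with the product via the Cartesian squares in \eqref{eq: big commuting diagram} and reducing the virtual-class statement to computing $\xi^!$ of the $Z$-side class --- matches the paper. The gap is in the mechanism you propose for the vanishing. Since $\gpfont{T}_{\msf{f}}=\ker\msf{f}$, the two characters of $N_{\bP^1/Z}$ (both proportional to $\msf{f}$) are already \emph{trivial} as $\gpfont{T}_{\msf{f}}$-weights, so there is no residual factor of $-\msf{f}$ to ``set to zero'' and no rational expression to restrict. Comparing the obstruction theories, the rank-two $H^0$ of the trivial normal bundle cancels against the normal bundles of the evaluation constraints ($\xi$ at the edge and the $\lvert_F$ condition at the leg), and what survives is the honest obstruction bundle $(\bE_v^\vee)^{\oplus 2}$, whose Euler class $\uplambda_{\mr{top}}^2$ \emph{multiplies} the class rather than dividing it. The vanishing for $g_v>0$ is then Mumford's relation $\uplambda_{g_v}^2=0$; a single copy of $\uplambda_{g_v}$, which is all your argument produces, does not vanish (e.g.\ $\uplambda_1\neq 0$ on $\Mbar_{1,1}$).

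Your proposed mechanism also does not cohere internally. If the vanishing came from a numerator factor of $-\msf{f}$ restricting to zero, it would kill the $g_v=0$ contributions as well, contradicting the second half of the lemma; and placing $e^{\gpfont{T}_{\msf{f}}}(\bE_v^\vee\otimes(-\msf{f}))$ in a denominator is not legitimate here, since after restriction this class is the nilpotent $(-1)^{g_v}\uplambda_{g_v}$ rather than an invertible equivariant Euler class --- inverting (virtual) normal bundles is only available along directions where the torus acts with nonzero weights, which $\gpfont{T}_{\msf{f}}$ does not on these. Finally, the ``careful bookkeeping'' you defer to the end is precisely the content of the proof: one must check that the excess is exactly $(\bE_v^\vee)^{\oplus 2}$ per vertex --- two copies, not one --- so that Mumford's relation applies.
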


\begin{proof}
	Let $\xi$ be as in \eqref{eq: defn xi inclusion}. Then since all squares in \eqref{eq: big commuting diagram} are Cartesian we have
	\begin{align*}
		[\Mbar_{\Gamma}]^{\vir} & = \Delta^! \big( [\Mbar_{\Gamma_Y}(Y \vertspace D)^{\simple}]^{\vir} \times [\Mbar_{\Gamma_Z}(Z \vertspace D_0+D_\infty)\lvert_{F}^{\rubber}]^{\vir}\big) \\[0.3em]
		& = [\Mbar_{\Gamma_Y}(Y \vertspace D)\lvert_{F}^{\simple}]^{\vir} \times \xi^![\Mbar_{\Gamma_Z}(Z \vertspace D_0+D_\infty)\lvert_{F}^{\rubber}]^{\vir}
	\end{align*}
	by \cite[Theorem 6.2 (c)]{Fulton}. Comparing the usual perfect obstruction theory of $\Mbar_{\Gamma_Z}(\bP^1 \vertspace 0+\infty)^{\rubber}$ with the one pulled back from $\Mbar_{\Gamma_Z}(Z \vertspace D_0+D_\infty)\lvert_{F}^{\rubber}$ we find
	\begin{equation*}
		\xi^![\Mbar_{\Gamma_Z}(Z \vertspace D_0+D_\infty)\lvert_{F}^{\rubber}]^{\vir} = \uplambda_{\mr{top}}^2 \cap [\Mbar_{\Gamma_Z}(\bP^1 \vertspace 0+\infty)^{\rubber}]^{\vir}
	\end{equation*}
	where $\uplambda_{\mr{top}}$ is the top Chern class of the Hodge bundle. The statement of the \namecref{prop: higher genus vanishing} then follows from the fact that $\uplambda_{\mr{top}}^2 = 0$ if any of the vertices in $\Gamma_Z$ carries a non-zero genus label and $\uplambda_{\mr{top}}^2 = 1$ otherwise.
\end{proof}

\subsubsection{Final vanishing}
\label{sec: composite 3rd vanishing}
Having proven \Crefrange{prop: composite unique Dinfty leg}{prop: higher genus vanishing} we are finally able to deduce \Cref{prop: composite contrib vanishes}.

\begin{proof}[Proof of \Cref{prop: composite contrib vanishes}]
	According to \Cref{prop: composite unique Dinfty leg} and \labelcref{prop: composite unique D0 leg} the only splitting types $\Gamma$ whose associated summand on the left-hand side of \eqref{eq: comp locus contrib rewritten} may be non-zero consist of $Z$-vertices with exactly one adjacent edge and leg. Moreover, by \Cref{prop: higher genus vanishing} every such vertex has to carry a vanishing genus label. Therefore, the virtual dimension of $\Mbar_{\Gamma_Z}(\bP^1 \vertspace 0+\infty)^{\rubber}$ is
	\begin{equation*}
		\sum_{v\in V(\Gamma_Z)} (2g_v-2) + |E(\Gamma)| + |L(\Gamma)| - 1 = -1
	\end{equation*}
	and so the cycle $[\Mbar_{\Gamma}]^{\vir}$ vanishes by \Cref{prop: higher genus vanishing}.
\end{proof}

\section{Step II: reducing interior to contact insertions}
\label{sec: reduction}
\subsection{The comparison statement} Throughout this section we will assume that $(S \vertspace D_1+D_2)$ is a logarithmic Calabi--Yau surface with boundary the union of two transversally intersecting smooth irreducible curves $D_1,D_2$. Moreover, we fix an effective curve class $\upbeta$ in $S$ and assume
\begin{itemize}
\item $D_i \cdot \upbeta >0$, $i\in\{1,2\}$,
\item $D_2 \cdot D_2\geq 0$.
\end{itemize}
Especially, in this section we do not necessarily assume $(S\vertspace D_1)$ to deform to a toric pair. We set $Y\coloneqq \Tot \cO_S (-D_2)$ and write $D$ for the preimage of $D_1$ under the projection morphism.

As before we fix a contact datum $\mathbf{c}=(c_1,\ldots,c_n)$ with $D$ where $c_i>0$ for all $i\in\{1,\ldots,n\}$. We set
\begin{equation}
	\label{eq: defn log GW Y D}
	\LGW{g,\mathbf{c},\upbeta}{Y \vertspace D}{\prod_{i=1}^n \mr{ev}_i^{\cstar} \big(\uppi^{\cstar} \mr{pt}\big)} = \int_{[\Mbar_{g,\mathbf{c},\upbeta}(Y \vertspace D)]^{\vir}} \prod_{j=1}^{n} \mr{ev}_j^{\cstar}\big(\uppi^{\cstar}\big[\mr{pt}_{D_1}\big]\big)\,.
\end{equation}
Now denote by
\begin{equation*}
	\mathbf{\tilde{c}}=(0,\ldots,0,c_1,\ldots,c_n)
\end{equation*}
the result of adding $m\leq n$ interior markings. Similarly to \eqref{eq: defn log GW Y D} we define $\LGW{g,\mathbf{\tilde{c}},\upbeta}{Y \vertspace D}{\prod_{i=1}^n \mr{ev}_i^{\cstar} \big(\uppi^{\cstar} \mr{pt}\big)}\in \bQ$ by imposing point conditions at the first $n$ markings as in \eqref{eq: def log GW Y D}. Note that this especially means that there is a point condition at all $m$ interior markings. We have the following comparison statement which reduces computations to the case without any interior markings.

\begin{prop}
	\label{thm: comparison point conditions}
	We have
	\begin{equation}
		\label{eq: comparison point conditions}
		\LGW{g,\mathbf{\tilde{c}},\upbeta}{Y \vertspace D}{\Pi_{i=1}^{n}\mr{ev}_i^{\cstar}(\uppi^{\cstar}\mr{pt})} = m! \left( \textstyle{\prod_{i=0}^{m-1}}c_{n-i}\right) \cdot \LGW{g,\mathbf{c},\upbeta}{Y \vertspace D}{\Pi_{i=1}^n \mr{ev}_i^{\cstar}(\uppi^{\cstar}\mr{pt})}\,.
	\end{equation}
\end{prop}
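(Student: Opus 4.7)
The natural approach is induction on $m$: peel off interior markings one at a time via the forgetful morphism
\[
	\varphi: \Mbar_{g,\mathbf{\tilde{c}},\upbeta}(Y \vertspace D) \longrightarrow \Mbar_{g,\mathbf{\tilde{c}}',\upbeta}(Y \vertspace D)
\]
that drops the first interior marking, where $\mathbf{\tilde{c}}'=(0,\ldots,0,c_1,\ldots,c_n)$ has $m-1$ zeros. Since the forgotten marking carries trivial contact order, forgetting preserves the logarithmic structure; $\varphi$ is virtually smooth of relative dimension~$1$ and the virtual class pulls back. The projection formula then identifies the left-hand side of \eqref{eq: comparison point conditions} with
\[
	\int_{[\Mbar_{g,\mathbf{\tilde{c}}',\upbeta}(Y \vertspace D)]^{\vir}} \varphi_\cstar\bigl(\mr{ev}_1^{\cstar}\uppi^{\cstar}\mr{pt}\bigr) \cdot \prod_{i=2}^{n} \mr{ev}_i^{\cstar}\uppi^{\cstar}\mr{pt}.
\]

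The heart of the argument is to show that, paired with the remaining $n-1$ point insertions, $\varphi_\cstar(\mr{ev}_1^{\cstar}\uppi^{\cstar}\mr{pt})$ transfers the point condition onto a ``spare'' contact marking, producing a factor equal to its tangency order. Geometrically, $\uppi^{\cstar}\mr{pt}$ is represented by a fibre $F_p$ for $p \in S$; specialising $p$ into $D_1$ forces $F_p \subset D$, and since a log-trivial interior marking cannot map into $D$, the limit is concentrated on configurations where the interior marking collapses onto a contact marking. The local model there is a degree-$c_j$ cover into the fibre direction, which accounts for the coefficient $c_j$.

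Iterating $m$ times, the pushforward of $\prod_{i=1}^m \mr{ev}_i^{\cstar}\uppi^{\cstar}\mr{pt}$ picks up one factor from each of the $m$ spare contact markings at positions $n-m+1,\ldots,n$, summed over the $m!$ bijections between interior markings and spare contact markings (each summand giving the same product). The resulting combinatorial prefactor is $m!\prod_{i=0}^{m-1}c_{n-i}$, matching the statement.

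The principal obstacle is to make the transfer rigorous. The cleanest route is likely via degeneration of $Y$ to the normal cone of $D$ combined with the logarithmic degeneration formula: the interior insertion migrates onto the bubble component $Z = \bP_D(\cO_D \oplus N_D Y)$, where a fibrewise rubber computation extracts the factor $c_j$ at each tangency-$c_j$ contact marking and kills all other contributions by a $\psi$-class dimension argument analogous to the composite-locus vanishing in \Cref{sec: composite locus}. An alternative is a direct universal-curve analysis using the explicit local form of log maps near contact divisors.
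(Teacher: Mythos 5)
Your proposal is correct and, in its rigorous form, is essentially the paper's own argument: the paper likewise specialises the point insertions to a fibre $F\subset D$, degenerates $Y$ to the normal cone of $D$, applies the decomposition formula, kills all but the star-shaped splitting types with genus-zero fibre-class $Z$-vertices by excess-intersection and Hodge-class (rather than $\psi$-class) vanishing arguments parallel to \Cref{sec: composite locus}, and extracts the factor $m!\prod_{i=0}^{m-1}c_{n-i}$ from the edge multiplicities $\prod_e d_e$ together with the $m!$ bijections $I_{\mr{pt}}\cong J_{\mathbbm{1}}$. The only cosmetic difference is that you phrase this as an induction peeling off one interior marking at a time via the forgetful morphism, whereas the paper handles all $m$ interior markings in a single degeneration.
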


Together with the special case $m=0$ of \Cref{prop: log threefold to descedant}, which we have already proven in the last section, the above reduction result implies the general case $m\geq 0$ of \Cref{prop: log threefold to descedant} as well. The rest of this section is hence dedicated to the proof of \Cref{thm: comparison point conditions}.

\subsection{Degeneration}
To prove the main result of this section we will use a degeneration argument similar to \cite{vGGR19}. Since by now this approach has become a well-established technique \cite{TY20:HigherGenusRelOrb,BNTY23:LocOrbSNC,GNS23:bicyclic,CMS24:QuasiMapDegen,BFGW21:HAE} we economise on details.

Recall the definition of the line bundle $Z\rightarrow P$ from equation \eqref{eq: proj compl Z} and \eqref{eq: proj compl P} (where we identify $S=\defS$). We defined $Z$ to be the projective completion of $N_D Y$ whose zero and infinity section we denote by $D_0$ and $D_{\infty}$ respectively. Let us consider the degeneration to the normal cone of $D$ in $Y$:
\begin{equation*}
	\cY \longrightarrow \Aaff{1}\,.
\end{equation*}
$\cY$ is a line bundle over the degeneration to the normal cone of $D_1$ in $S$. While the general fibre of $\cY$ is $Y$, the special fibre $\cY_0 = Y \sqcup_D Z$ is the result of gluing $Y$ along $D$ to $Z$ along the zero section $D_0$. Now let us write $\cD$ for the closure of $D\times (\Aaff{1} - \{0\})$ in $\cY$ and equip $\cY$ with the divisorial logarithmic structure with respect to $Y+Z+\cD$. The resulting logarithmic scheme has general fibre $(Y \vertspace D)$ and its special fibre is obtained by gluing
\begin{equation*}
	(Y \vertspace D) \qquad \text{and} \qquad (Z \vertspace D_0+D_\infty)\,.
\end{equation*}
Finally, for $F$ a fibre of $D\rightarrow D_1$ we denote by $\cF$ the closure of $F \times (\Aaff{1}-\{0\})$ in $\cY$. The intersection $\cF_0$ of $\cF$ with the special fibre $\cY_0$ is a fibre of $D_\infty\rightarrow D_1$.

\subsection{Decomposition}
Composition with the blowup morphism $\cY \rightarrow Y\times \Aaff{1}$ produces a pushforward
\begin{equation*}
	\uprho : \Mbar_{g,\mathbf{\tilde{c}},\upbeta}(\cY_0) \longrightarrow \Mbar_{g,\mathbf{\tilde{c}},\upbeta}(Y \vertspace D)
\end{equation*}
and by the conservation of number principle we have
\begin{equation}
	\label{eq: conservation of number principle}
	[\Mbar_{g,\mathbf{\tilde{c}},\upbeta}(Y \vertspace D)\lvert_{F}]^{\vir} = (\uprho_{\mc{F}_0})_\cstar [\Mbar_{g,\mathbf{\tilde{c}},\upbeta}(\cY_0)\lvert_{\mc{F}_0}]^{\vir}
\end{equation}
where we introduced the fibre products
\begin{equation*}
	\begin{tikzcd}
		\Mbar_{g,\mathbf{\tilde{c}},\upbeta}(Y \vertspace D)\lvert_{F} \arrow[r, hook] \arrow[d] \ar[rd,phantom,"\square",start anchor=center,end anchor=center] & \Mbar_{g,\mathbf{\tilde{c}},\upbeta}(Y \vertspace D) \arrow[d, "\Pi_{i=1}^n\mr{ev}_i"] & & \Mbar_{g,\mathbf{\tilde{c}},\upbeta}(\cY_0)\lvert_{\cF_0} \arrow[r, hook] \arrow[d] \ar[rd,phantom,"\square",start anchor=center,end anchor=center] & \Mbar_{g,\mathbf{\tilde{c}},\upbeta}(\cY_0) \arrow[d, "\Pi_{i=1}^n\mr{ev}_i"]\\
		F^n \arrow[r,hook] & Y^n & & \cF_0^n \arrow[r,hook] & \cY_0^n\,.
	\end{tikzcd}
\end{equation*}
and pulled back the respective virtual fundamental classes along horizontal arrows. The pushforward of the left-hand side of \eqref{eq: conservation of number principle} to $\Mbar_{g,\mathbf{\tilde{c}},\upbeta}(Y \vertspace D)$ gives
\begin{equation*}
	\Pi_{i=1}^{n} \mr{ev}_i^{\cstar}\big(\uppi^{\cstar}\big[\mr{pt}_{S}\big]\big) \cdot \Pi_{j=m+1}^{n} \mr{ev}_j^{\cstar}\big(\uppi^{\cstar}\big[\mr{pt}_{D_1}\big]\big) \cap [\Mbar_{g,\mathbf{\tilde{c}},\upbeta}(Y \vertspace D)]^{\vir}
\end{equation*}
and so pushing forward to a point returns the Gromov--Witten invariant we are after:
\begin{equation*}
	\LGW{g,\mathbf{\tilde{c}},\upbeta}{Y \vertspace D} {\Pi_{i=1}^n\mr{ev}_i^{\cstar} (\pi^{\cstar}\mr{pt})}\,.
\end{equation*}

We combine this with the decomposition formula \cite[Theorem 5.4]{ACGS20:Decomposition}
\begin{equation}
	\label{eq: decomposition}
	[\Mbar_{g,\mathbf{\tilde{c}},\upbeta}(\cY_0)\lvert_{\mc{F}_0}]^{\vir} = \sum_{\uptau} \frac{m_{\uptau}}{ | \Aut \uptau | } \upiota_\cstar [\Mbar_{\uptau}(\cY_0)\lvert_{\mc{F}_0}]^{\vir}
\end{equation}
where $\upiota$ denotes the inclusion
\begin{equation*}
	\Mbar_{\uptau}(\cY_0)\lvert_{\mc{F}_0} \hookrightarrow \Mbar_{g,\mathbf{\tilde{c}},\upbeta}(\cY_0)\lvert_{\mc{F}_0}
\end{equation*}
and the sum runs over all rigid tropical types $\uptau$ of tropical maps to the polyhedral complex
\begin{equation*}
	\begin{tikzpicture}[baseline=(current  bounding  box.center)]
		\draw (-3,0) node{$\Sigma \, :$};

		\draw (-2,0) -- (0,0);

		\draw[->] (0,0) -- (3,0);

		\draw[fill=black] (-2,0) circle[radius=2pt];
		\draw (-2,0) node[below]{$\upsigma_Y$};

		\draw[fill=black] (0,0) circle[radius=2pt];
		\draw (0,0) node[below]{$\upsigma_Z$};
	\end{tikzpicture}
\end{equation*}
which is the fibre over $1\in \mbb{R}_{\geq 0}$ of the tropicalisation $\Sigma(\cY) \rightarrow \Sigma (\Aaff{1}) =  \mbb{R}_{\geq 0}$.

By the discussion in \cite[Section 5.1]{ACGS20:Decomposition} $\uptau$ can only be rigid if all vertices of the underlying graph map to either one of the vertices $\upsigma_X$ or $\upsigma_Z$. Hence, $\uptau$ is uniquely reconstructed from the data of its underlying decorated bipartite graph which is exactly the data of a splitting type as introduced in \Cref{defn: splitting type}. Hence, to be closer to the notation used in \Cref{sec: composite locus} let us write $\Gamma$ for the splitting type underlying $\uptau$ and $\Gamma_Y$, $\Gamma_Z$ for the type of stable logarithmic maps to $(Y \vertspace D)$, respectively $(Z \vertspace D_0+D_\infty)$, obtained by cutting $\Gamma$ along edges.

We remark that only such rigid tropical types can contribute non-trivially to the sum \eqref{eq: decomposition} for which all marking legs are attached to a $Z$-vertex. Indeed, contact markings necessarily have to be adjacent to a $Z$-vertex and if an interior marking is carried by a $Y$-vertex then $\Mbar_{\uptau}(\cY_0)\lvert_{\mc{F}_0}$ is empty (where we crucially use our assumption that $m\leq n$). Moreover, the factor $m_{\uptau}$ in the decomposition formula \eqref{eq: decomposition} equates to $\mr{lcm}(d_e)$ where $d_e$ denotes contact order of an edge $e$.

Since we are in the special setting where $\Sigma$ is of dimension one the discussion in \cite[Section 7]{Gro23:RmksGluing} applies. There is a morphism $\upnu$ with target the fibre product
\begin{equation}
	\label{eq: defn Mbar Gamma}
	\begin{tikzcd}
		\Mbar_{\uptau}(\cY_0)\lvert_{\mc{F}_0} \arrow[r, "\upnu"] & \Mbar_\Gamma \arrow[r] \arrow[d] \ar[rd,phantom,"\square"] & \Mbar_{\Gamma_Y}(Y \vertspace D)\times \Mbar_{\Gamma_Z}(Z \vertspace D_0 + D_\infty)\lvert_{\mc{F}_0} \arrow[d] \\
		& D_0^{E(\Gamma)} \arrow[r, "\Delta"] & D_0^{E(\Gamma)} \times D_0^{E(\Gamma)}
	\end{tikzcd}
\end{equation}
which is finite and satisfies
\begin{equation}
	\label{eq: decomposition 2}
	\upnu_\cstar [\Mbar_{\uptau}(\cY_0)\lvert_{\mc{F}_0}]^{\vir} = \frac{\Pi_e \, d_e}{\mr{lcm}(d_e)} \, \Delta^! \big(
	 [\Mbar_{\Gamma_Y}(Y \vertspace D)]^{\vir} \times [\Mbar_{\Gamma_Z}(Z \vertspace D_0 + D_\infty)\lvert_{\mc{F}_0}]^{\vir} \big)\,.
\end{equation}
There is a gluing morphism $\uptheta: \Mbar_\Gamma \rightarrow \Mbar_{g,m+n,\upbeta}(Y)$ such that
\begin{equation}
	\label{eq: gluing morph commutes}
	\begin{tikzcd}
		\Mbar_{\uptau}(\cY_0)\lvert_{\mc{F}_0} \arrow[r, "\upnu"] \arrow[d, "\upiota"] & \Mbar_{\Gamma} \arrow[d,"\uptheta"] \\
		\Mbar_{g,\mathbf{\tilde{c}},\upbeta}(\cY_0)\lvert_{\mc{F}_0} \arrow[r,"\upzeta\circ \uprho_{\mc{F}_0}"] & \Mbar_{g,m+n,\upbeta}(Y)
	\end{tikzcd}
\end{equation}
commutes where $\upzeta$ denotes the forgetful morphism
\begin{equation*}
	\upzeta : \Mbar_{g,\mathbf{\tilde{c}},\upbeta}(Y \vertspace D)\lvert_{F} \longrightarrow \Mbar_{g,m+n,\upbeta}(Y)\,.
\end{equation*}
To summarize the discussion so far we combine equation \eqref{eq: decomposition} and \eqref{eq: decomposition 2} and use the fact that diagram \eqref{eq: gluing morph commutes} commutes to deduce
\begin{equation}
	\label{eq: degeneration result}
	\upzeta_\cstar [\Mbar_{g, \mathbf{\tilde{c}} ,\upbeta}(Y \vertspace D)\lvert_{F}]^{\vir} = \sum_{\Gamma} \frac{\Pi_e \, d_e}{|\Aut \Gamma|} \uptheta_\cstar [\Mbar_\Gamma]^{\vir}
\end{equation}
where we introduced the notation
\begin{equation*}
	[\Mbar_\Gamma]^{\vir} \coloneqq \Delta^! \big(
	[\Mbar_{\Gamma_Y}(Y \vertspace D)]^{\vir} \times [\Mbar_{\Gamma_Z}(Z \vertspace D_0 + D_\infty)\lvert_{\mc{F}_0}]^{\vir} \big)\,.
\end{equation*}

\subsection{Vanishing}
In this section we will argue that for most splitting types $\Gamma$ the cycle $[\Mbar_\Gamma]^{\vir}$ vanishes. We note that our setup is almost the same as in \Cref{sec: composite locus}. The only notable differences are as follows.
\begin{itemize}
	\item $\Mbar_{\Gamma_Y}(Y \vertspace D)$ parametrise logarithmic stable maps to $(Y \vertspace D)$. In \Cref{sec: composite locus} we considered the simple fixed locus of this moduli stack under some torus action.

	\item $\Mbar_{\Gamma_Z}(Z \vertspace D_0 + D_\infty)$ parametrises logarithmic stable maps to $(Z \vertspace D_0 + D_\infty)$ while in \Cref{sec: composite locus} the target was non-rigid $(Z \vertspace D_0 + D_\infty)$.

	\item $\Gamma_Z$ contains interior markings.
\end{itemize}
We note that all arguments in \Cref{sec: composite locus} are essentially insensitive to first two differences which allows us to prove the vanishing of $[\Mbar_\Gamma]^{\vir}$ in similar steps. Only the third point requires some attention.

\subsubsection{First vanishing: fibre classes}
We start with the analogue of \Cref{lem: GNS23 lemma}.

\begin{lem}
	\label{lem: Zvertices fibre class}
	We have $[\Mbar_{\Gamma_Z}(Z \vertspace D_0 + D_\infty)\lvert_{\mc{F}_0}]^{\vir}=0$ unless $\Gamma_Z$ only consists of vertices decorated with a curve class which is a multiple of a fibre class.
\end{lem}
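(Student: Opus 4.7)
The plan is to adapt the argument of \Cref{lem: GNS23 lemma} (from \cite[Section 2.3.3]{GNS23:bicyclic}) to the present logarithmic degeneration setup. As the author has noted above, the only substantive differences between the setup here and that of \Cref{sec: composite locus} are (i) we now work with logarithmic stable maps to the rigid target $(Z \vertspace D_0 + D_\infty)$ rather than the non-rigid one, and (ii) the type $\Gamma_Z$ may carry interior markings. The key geometric input is that, even without the toric deformation assumption on $S$, the target $(Z \vertspace D_0 + D_\infty)$ inherits a canonical $\Gm$-action from its iterated projective bundle structure $Z \to P \to D_1$: the fibrewise scaling of the $\bP^1$-bundle $P \to D_1$ lifts to $Z$ because $Z \cong \cO_P(-2f)$ is the pullback of a line bundle along $\uppi_P : P \to D_1$, and one verifies that $D_0$, $D_\infty$ and the chosen fibre $\mc{F}_0$ of $D_\infty \to D_1$ over $p = F \cap D_1$ are all $\Gm$-invariant.

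I would then apply $\Gm$-virtual localization to $[\Mbar_{\Gamma_Z}(Z \vertspace D_0 + D_\infty)\lvert_{\mc{F}_0}]^{\vir}$. The $\Gm$-fixed locus of $P$ is the union of its two sections over $D_1$, so every $\Gm$-fixed stable map projects into the preimage of these sections under $Z \to P$. If some $Z$-vertex $v$ carries a class $\upbeta_v$ that is not a multiple of a fibre class of $P \to D_1$, then on the corresponding subcurve the composition $\uppi_P \circ f_v : C_v \to D_1$ is a non-constant cover. Exactly as in the computation of \cite[Section 2.3.3]{GNS23:bicyclic}, one then identifies a trivial $\Gm$-equivariant factor in the virtual normal bundle of the corresponding fixed component, causing its contribution to vanish. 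Since the cycle we started with is non-equivariant and $\Gm$-invariant, the summed localization expression equates to it, yielding the claim.

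The main technical point to check is that the introduction of interior markings at $Z$-vertices does not spoil the trivial-factor analysis. This should however reduce to a routine verification, since interior markings carry no contact data along $D_0 + D_\infty$ and the deformation-obstruction computation at a $Z$-vertex depends only on the curve class, genus and contact orders at edges; adding interior markings contributes only extra copies of $\cO_{C_v}$ and $\uppsi$-classes, neither of which alters the weight-zero factor responsible for the vanishing. The remaining differences -- rigidity of the target and the logarithmic (rather than relative) framework -- do not affect the argument, precisely as anticipated in the author's remark preceding the lemma.
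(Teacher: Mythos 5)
Your approach is genuinely different from the paper's, and as written it has a gap at the crucial final step. The paper does not localise at all here: it composes with the projection $Z \to D_0$ and invokes the virtual push--pull machinery of van Garrel--Graber--Ruddat (supplemented by Manolache's virtual pullback, since the relevant morphism to the stack of weighted prestable curves is only virtually smooth) to exhibit $[\Mbar_{\Gamma_Z}(Z \vertspace D_0+D_\infty)]^{\vir}$ as a virtual pullback of $[\Mbar_{g_Z,k,d[D_1]}(D_0)]^{\vir}$, which vanishes for $d>0$ because $D_0 \cong \Tot\cO_{\bP^1}(-2)$ carries a nowhere-degenerate holomorphic symplectic form. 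So the underlying mechanism --- a trivial piece of the obstruction theory produced by the non-fibre part of the class covering $D_1$ --- is the one you point to, but the paper extracts it non-equivariantly, and this matters.

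Two problems with your localisation packaging. First, a terminological but real one: the trivial factor responsible for the vanishing must sit in the \emph{fixed} part of the obstruction theory (the obstruction bundle of the fixed component), not in the virtual normal bundle; a trivial summand of $N^{\vir}$ would make $1/e^{\Gm}(N^{\vir})$ undefined rather than kill the contribution. Second, and more seriously: the lemma asserts the vanishing of a Chow class on a stack that is not obviously proper ($Z$ is non-compact, and maps of fibre class can translate along the fibres of $Z \to P$; the constraint $\lvert_{\mc{F}_0}$ confines evaluations only to the non-compact curve $\mc{F}_0$), and this class is subsequently fed into Gysin pullbacks and pushforwards at the cycle level. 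Virtual localisation identifies the equivariant class with the sum over fixed loci only after inverting the equivariant parameter $t$; if every fixed-locus contribution vanishes you learn that the equivariant virtual class is $t$-torsion, which for a non-proper stack does not imply that its non-equivariant image vanishes (compare $\Gm$ acting on itself: empty fixed locus, non-zero fundamental class). In \cite[Section 2.3.3]{GNS23:bicyclic} and in \Cref{sec: composite locus} this issue does not arise because one is evaluating a localised \emph{number} inside an ambient equivariant residue computation; here it does. To repair your argument you would need to produce the trivial obstruction factor before localising --- which is exactly what the projection onto the holomorphic symplectic surface $D_0$ accomplishes in the paper's proof.
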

\begin{proof}
	Let us write $\upbeta_Z$ for the overall curve class carried by $\Gamma_Z$ and suppose its projection $p_\cstar \upbeta_Z$ along the morphism $p:P\rightarrow D_1$ is non-zero. In other words we assume $p_\cstar \upbeta_Z = d[D_1]$ for some $d>0$. We claim that in this case
	\begin{equation*}
		[\Mbar_{\Gamma_Z}(Z \vertspace D_0 + D_\infty)]^{\vir}=0\,.
	\end{equation*}
	To prove this we use an argument from \cite[Section 4]{vGGR19}. Set $k=m+n+|E(\Gamma)|$ and consider the morphism
	\begin{equation*}
		\Mbar_{\Gamma_Z}(Z \vertspace D_0 + D_\infty) \longrightarrow \Mbar_{g_Z,k, d[D_1]} (D_0)
	\end{equation*}
	obtained by forgetting the logarithmic structure and composing with the projection $Z \rightarrow D_0$. Write $\hhh_2(P)^+$ and $\hhh_2(D_1)^+$ for the semigroups of effective curve classes in $P$, respectively $D_1$ and denote by $\mf{M}_{g,n,\hhh}^{(\mr{log})}$ the Artin stack of prestable (logarithmic) curves with irreducible components weighted by an element in a semigroup $\hhh$. We factor the projection morphism through
	\begin{equation*}
		\begin{tikzcd}
			\Mbar_{\Gamma_Z}(Z \vertspace D_0 + D_\infty) \arrow[r, "u"] \arrow[dr] & \widetilde{\modulifont{M}}_{\Gamma_Z} \arrow[r] \arrow[d] \ar[rd,phantom,"\square",start anchor=center,end anchor=center] & \Mbar_{g_Z,k, d[D_1]} (D_0) \arrow[d] \\
			& \mf{M}_{g_Z,k,\hhh_2(P)^+}^{\mr{log}} \arrow[r, "v"] & \mf{M}_{g_Z,k,\hhh_2(D_1)^+}
		\end{tikzcd}
	\end{equation*}
	Then by \cite[Theorem 4.1]{vGGR19} we have\footnote{In the statement of \cite[Theorem 4.1]{vGGR19} van Garrel, Graber and Ruddat assume a positivity property for the logarithmic tangent sheaf of the target which guarantees that $u$ is smooth. In our case at hand this positivity condition is in general not satisfied and $u$ is only virtually smooth in the sense of \cite[Definition~3.4]{Man12:VirtPush}. The latter is however sufficient for our purposes thanks to \cite[Corollary~4.9]{Ma11:VirtPull}.}
	\begin{equation*}
		[\Mbar_{\Gamma_Z}(Z \vertspace D_0 + D_\infty)]^{\vir} = u^!_{\mf{E}_{\Mbar_{\Gamma_Z}(Z \vertspace D_0 + D_\infty)/\widetilde{\modulifont{M}}_{\Gamma_Z}}} v^! [\Mbar_{g_Z,k, d[D_1]} (D_0)]^{\vir}\,.
	\end{equation*}
	Our claim that $[\Mbar_{\Gamma_Z}(Z \vertspace D_0 + D_\infty)]^{\vir}=0$ directly follows from the fact that
	\begin{equation*}
		[\Mbar_{g_Z,k, d[D_1]} (D_0)]^{\vir} = 0
	\end{equation*}
	since $D_0 \cong \Tot \cO_{\bP^1}(-2)$ carries a nowhere vanishing holomorphic two-form and $d>0$.
\end{proof}

\subsubsection{Second vanishing: star shaped graphs} In this section we apply the same arguments we used in \Cref{sec: composite 2nd vanishing} in order to constrain the shape of $\Gamma$ to star shaped graphs.

\begin{notn}
	We introduce the following notation for labelling sets:
	\begin{itemize}
		\item Interior markings with a point condition $I_{\mr{pt}} \coloneqq \{1,\ldots,m\}$;
		\item Contact markings with a point condition $J_{\mr{pt}} \coloneqq \{m+1,\ldots,n\}$;
		\item Contact markings without an insertion $J_{\mathbbm{1}} \coloneqq \{n+1,\ldots,m+n\}$.
	\end{itemize}
\end{notn}

We may assume that all $Z$-vertices of $\Gamma$ are decorated with a curve class which is a multiple of a fibre class. This has the following consequence.
\begin{lem}
	\label{prop: unique Dinfinity leg}
	We have $[\Mbar_{\Gamma_Z}(Z \vertspace D_0 + D_\infty)\lvert_{\mc{F}_0}]^{\vir} = 0$ unless every $Z$-vertex
	\begin{itemize}
		\item either carries two legs, one labelled in $I_{\mr{pt}}$ and one in $J_{\mathbbm{1}}$;

		\item or carries a single leg with label in $J_{\mr{pt}}$.
	\end{itemize}
\end{lem}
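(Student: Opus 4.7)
My plan is to adapt the excess intersection argument of \Cref{prop: composite unique Dinfty leg} to the present setting. By \Cref{lem: Zvertices fibre class} I may assume every $Z$-vertex $v$ carries a fibre curve-class, so that all marking evaluations at $v$ factor through a common fibre of $Z \to D_1$ and the line bundle $Z \to P$ trivialises along fibres of $P \to D_1$.

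The heart of the proof is to show that no $Z$-vertex $v$ can carry two legs both in $I_{\mr{pt}} \cup J_{\mr{pt}}$, which is exactly the set of legs subject to the $\cF_0$-constraint in the fibre-product definition of $\Mbar_{\Gamma_Z}(Z | D_0 + D_\infty)\lvert_{\cF_0}$. If such a vertex $v$ existed, the fibre-class hypothesis would force the product of the two evaluation morphisms to factor through a diagonal of $\cY_0$ inside $\cY_0 \times \cY_0$, producing the same commuting diagram of Cartesian squares as in the proof of \Cref{prop: composite unique Dinfty leg}. The excess bundle appearing in the resulting excess intersection formula is a vector bundle on $\cF_0^{k} \cong \Aaff{k}$ and is therefore trivial; since its rank is positive, its top Chern class vanishes, giving $[\Mbar_{\Gamma_Z}(Z | D_0 + D_\infty)\lvert_{\cF_0}]^{\vir} = 0$.

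To complete the argument I combine this with the intersection constraint $D_\infty \cdot \upbeta_v > 0$ (which forces at least one contact leg at each vertex) and with the global counts $|I_{\mr{pt}}| = |J_{\mathbbm{1}}| = m$ and $|J_{\mr{pt}}| = n - m$. A vertex-by-vertex virtual dimension count, exploiting the Hurwitz-type description of fibre-class maps to $(Z, D_0 + D_\infty)$ and mirroring the analysis preceding \Cref{prop: composite unique D0 leg}, further excludes configurations with two legs in $J_{\mathbbm{1}}$ at a single vertex, as well as vertices carrying an $I_{\mr{pt}}$-leg unaccompanied by a $J_{\mathbbm{1}}$-leg. The configurations that survive all of these constraints are precisely the two described in the statement.

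The main obstacle I anticipate is that legs in $I_{\mr{pt}}$ evaluate into $Z$ while legs in $J_{\mr{pt}}$ evaluate into $D_\infty$, so the diagonal factorisation underlying the excess intersection sits in ambient products of different dimensions depending on the leg types involved. Resolving this uniformly requires observing that for a fibre-class vertex the $\cF_0$-constraint effectively factors the evaluation of an interior marking through $\cF_0 \subset D_\infty$; after this reduction the triviality-of-bundles-on-affine-base mechanism operates in exactly the same way as in the proof of \Cref{prop: composite unique Dinfty leg}, yielding the required vanishing.
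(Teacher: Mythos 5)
Your excess-intersection step is the paper's argument: after \Cref{lem: Zvertices fibre class} one introduces the intermediate stack $\Nbar_{\Gamma_Z} = \Mbar_{\Gamma_Z}(Z \vertspace D_0+D_\infty)\times_{Z^n} D_\infty^n$ to put the interior and contact point-conditions on the same footing, and a vertex carrying two legs from $I_{\mr{pt}}\cup J_{\mr{pt}}$ forces the evaluation $\Nbar_{\Gamma_Z}\to D_\infty^n$ to factor through $D_\infty^{n-1}$, whence the class vanishes because the excess bundle is a trivial bundle of positive rank on the affine space $\cF_0^{n-1}$. That half of your proposal is correct and coincides with \Cref{prop: composite unique Dinfty leg} as adapted in the paper.

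The gap is in how you finish. A vertex-by-vertex virtual dimension count cannot exclude the remaining bad configurations: a dimension count yields vanishing only when the virtual dimension is negative, whereas a genus-zero fibre-class vertex with one edge and two legs in $J_{\mathbbm{1}}$ has a nonempty moduli space of \emph{positive} virtual dimension (its $\bP^1$-reduction has virtual dimension $2g-2+\ell(\upmu)+\ell(\upnu)=1$), and a vertex carrying a single unconstrained $J_{\mathbbm{1}}$ leg and nothing else is likewise not ruled out by any local vanishing. The paper excludes these purely combinatorially, and this pigeonhole is precisely why the excess step ``suffices'': once no vertex carries two legs from $I_{\mr{pt}}\cup J_{\mr{pt}}$, the $|I_{\mr{pt}}|+|J_{\mr{pt}}|=n$ point-conditioned legs force at least $n$ vertices, while nonemptiness ($D_\infty\cdot\upbeta_v>0$ forces a contact leg at every vertex) together with $|J_{\mr{pt}}|+|J_{\mathbbm{1}}|=n$ forces at most $n$ vertices. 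Hence there are exactly $n$ vertices, each carrying exactly one point-conditioned leg and exactly one contact leg, and the two cases of the statement are the only configurations compatible with this. You should replace the dimension count by this counting argument; as written, that step of your proof would fail.
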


\begin{proof}
	It suffices to show that every $Z$-vertex is carrying at most one leg with label in $I_{\mr{pt}} \cup J_{\mr{pt}}$. Indeed, in this case the number of vertices in $\Gamma_Z$ is bounded from below by $|I_{\mr{pt}}| + |J_{\mr{pt}}| = n$. On the other hand it is also bounded from above by the number of contact markings $n$. We deduce that there have to be exactly $n$ vertices each carrying one of the legs labelled by $J_{\mathbbm{1}}\cup J_{\mr{pt}}$. Since we assumed that every vertex carries at most one leg with label in $I_{\mr{pt}} \cup J_{\mr{pt}}$, the $m$ interior markings are necessarily distributed among the vertices carrying a leg labelled in $J_{\mathbbm{1}}$.

	So suppose there is a $Z$-vertex with more than one leg with label in $I_{\mr{pt}} \cup J_{\mr{pt}}$. Since the evaluation morphism of an interior marking has target $Z$ let us first define a substack with confined evaluations:
	\begin{equation*}
		\Nbar_{\Gamma_Z} \coloneqq \Mbar_{\Gamma_Z}(Z \vertspace D_0 + D_\infty) \times_{Z^n} D_\infty^n\,.
	\end{equation*}
	As every $Z$-vertex is decorated with a curve class which is a multiple of a fibre class and we assume there is a vertex with two legs labelled in $I_{\mr{pt}} \cup J_{\mr{pt}}$ the resulting evaluation morphism
	\begin{equation*}
		\Nbar_{\Gamma_Z} \longrightarrow D_\infty^n
	\end{equation*}
	factors through $D_\infty^{n-1} \rightarrow D_\infty^n$. We obtain the following commuting diagram with Cartesian squares:
	\begin{equation*}
		\begin{tikzcd}
			\Mbar_{\Gamma_Z}(Z \vertspace D_0 + D_\infty)\lvert_{\mc{F}_0} \arrow[r] \arrow[d] \ar[rd,phantom,"\square",start anchor=center,end anchor=center] & \Nbar_{\Gamma_Z} \arrow[d] \arrow[r] \ar[rdd,phantom,"\square",start anchor=center,end anchor=center] & \Mbar_{\Gamma_Z}(Z \vertspace D_0 + D_\infty) \arrow[dd,"\Pi_{i=1}^n \mr{ev}_i"]\\
			\mc{F}_0^{n-1} \arrow[d] \arrow[r] \ar[rd,phantom,"\square",start anchor=center,end anchor=center] & D_\infty^{n-1} \arrow[d] & \\
			\mc{F}_0^{n} \arrow[r] & D_\infty^n \arrow[r] & Z^{n}\,.
		\end{tikzcd}
	\end{equation*}
	The vanishing of $[\Mbar_{\Gamma_Z}(Z \vertspace D_0 + D_\infty)\lvert_{\mc{F}_0}]^{\vir}$ then immediately follows from an excess bundle calculation as the one we did in \Cref{prop: composite unique Dinfty leg}.
\end{proof}

Moreover, by our assumption that all $Z$-vertices carry curve classes which are multiples of a fibre class the evaluation morphism associated to edges
\begin{equation*}
	\Mbar_{\Gamma_Z}(Z \vertspace D_0 + D_\infty)\lvert_{\mc{F}_0} \longrightarrow D_0^{E(\Gamma)}
\end{equation*}
factors through $F^{E(\Gamma)} \hookrightarrow D_0^{E(\Gamma)}$ where we write $F$ for the image of $\cF_0$ under $Z\rightarrow D_0$. We combine this observation with the fact that stable maps to $Y$ factor through the zero section $S\hookrightarrow Y$ in order to constrain the image of $Z$-vertices to the fibre of $Z\rightarrow D_0$ over the point $p = F \cap D_1$. Indeed, as we just argued the evaluation morphism
\begin{equation*}
	\Mbar_{\Gamma_Y}(Y \vertspace D) \longrightarrow D^{E(\Gamma)}
\end{equation*}
factors through $D_1^{E(\Gamma)} \hookrightarrow D^{E(\Gamma)}$. Writing $\Mbar_{\Gamma_Y}(Y \vertspace D)\lvert_{F}$ for the fibre product $\Mbar_{\Gamma_Y}(Y \vertspace D)\times_{D^{E(\Gamma)}} F^{E(\Gamma)}$ we find that
\begin{equation}
	\label{eq: diagram restrict image to fibre}
	\begin{tikzcd}
		\Mbar_{\Gamma_Y}(Y \vertspace D)\lvert_{F}\,\times\, \Mbar_{\Gamma_Z}(\bP^1 \vertspace 0+\infty)\lvert_{\infty} \arrow[d] \arrow[r] \ar[rd,phantom,"\square",start anchor=center,end anchor=center] & \Mbar_{\Gamma_Y}(Y \vertspace D) \times \Mbar_{\Gamma_Z}(Z \vertspace D_0+D_\infty)\lvert_{\cF_0} \arrow[d] \\
		p \arrow[d] \arrow[r] \ar[rd,phantom,"\square",start anchor=center,end anchor=center] & D_1^{E(\Gamma)} \times F^{E(\Gamma)} \arrow[d] \\
		D_0^{E(\Gamma)} \arrow[r, "\quad~~\Delta"] & D_0^{E(\Gamma)} \times D_0^{E(\Gamma)} \,.
	\end{tikzcd}
\end{equation}
commutes and both squares are Cartesian. An excess intersection argument as in \Cref{prop: composite unique D0 leg} gives

\begin{lem}
	\label{prop: unique D0 legs}
	We have $[\Mbar_\Gamma]^{\vir}=0$ if there is a $Z$-vertex with more than one edge.\qed
\end{lem}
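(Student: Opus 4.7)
My plan is to adapt the excess intersection argument used in \Cref{prop: unique Dinfinity leg} (which in turn follows the torus-equivariant template of \Cref{prop: composite unique Dinfty leg}). Thanks to \Cref{lem: Zvertices fibre class} I may assume every $Z$-vertex of $\Gamma$ is decorated with a multiple of a fibre class of the $\mathbb{P}^1$-bundle $Z \to D$, and the restrictions on edge evaluations displayed in diagram \eqref{eq: diagram restrict image to fibre} are in force.

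Suppose some $Z$-vertex $v$ has at least two adjacent edges $e_1, e_2$. The subcurve at $v$ is connected and carries a multiple of a fibre class of $Z \to D$, so (since distinct fibres are disjoint) its image is contained in a single fibre of $Z \to D$. Such a fibre is a $\mathbb{P}^1$ meeting $D_0$ transversely in a unique point, so the two evaluation morphisms
\begin{equation*}
	\mr{ev}_{e_1},\,\mr{ev}_{e_2} : \Mbar_{\Gamma_Z}(Z \vertspace D_0+D_\infty)\lvert_{\cF_0} \longrightarrow D_0
\end{equation*}
necessarily coincide. Equivalently, their product factors through the diagonal $D_0 \hookrightarrow D_0 \times D_0$, strictly shrinking the image of the edge evaluations inside $D^{E(\Gamma)} \times D^{E(\Gamma)}$ beyond the $D_1^{E(\Gamma)} \times F^{E(\Gamma)}$ factorisation already displayed in \eqref{eq: diagram restrict image to fibre}.

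Applying the excess intersection formula \cite[Theorem~6.3]{Fulton} to the Gysin pullback $\Delta^!$ in the definition of $[\Mbar_\Gamma]^{\vir}$ then produces a non-trivial excess bundle that pulls back from the fibre $F \cong \Aaff{1}$. Since every vector bundle on a power of $\Aaff{1}$ is trivial, this excess bundle has vanishing top Chern class, forcing $[\Mbar_\Gamma]^{\vir}=0$. The main obstacle I anticipate is purely bookkeeping: pinning down the precise rank and splitting of the excess bundle from the composite of Cartesian squares, both of which should go through essentially verbatim along the lines of \Cref{prop: unique Dinfinity leg}.
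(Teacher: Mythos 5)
Your proposal is correct and follows exactly the route the paper intends: the coincidence of the two edge evaluations forces a further diagonal factorisation in diagram \eqref{eq: diagram restrict image to fibre}, and the resulting positive-rank excess bundle is trivial over (a power of) $F\cong\Aaff{1}$, so its top Chern class kills $[\Mbar_\Gamma]^{\vir}$. The paper merely points to the analogue of \Cref{prop: composite unique D0 leg} and \Cref{prop: composite unique Dinfty leg}; your observation that a connected fibre-class subcurve lands in a single fibre of $Z\to D_0$, whence $\mr{ev}_{e_1}=\mr{ev}_{e_2}$, is precisely the detail it leaves to the reader.
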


Since all squares in \eqref{eq: diagram restrict image to fibre} are Cartesian, a comparison with \eqref{eq: defn Mbar Gamma} gives
\begin{equation*}
	\label{eq: isom fibre squares}
	\Mbar_{\Gamma} \cong \Mbar_{\Gamma_Y}(Y \vertspace D)\lvert_{F}\,\times\, \Mbar_{\Gamma_Z}(\bP^1 \vertspace 0+\infty)\lvert_{\infty}\,.
\end{equation*}
The following \namecref{prop: higher genus vanishing 2} comparing the virtual fundamental classes of the above moduli stacks can be proven with the same arguments we used to show \Cref{prop: higher genus vanishing}.

\begin{lem}
	\label{prop: higher genus vanishing 2}
	If $\Gamma$ contains a $Z$-vertex $v$ with $g_v>0$ the cycle $[\Mbar_\Gamma]^{\vir}$ vanishes. Otherwise, it equates to
	\begin{flalign*}
		&&[\Mbar_{\Gamma_Y}(Y \vertspace D)\lvert_{F}]^{\vir} \times [\Mbar_{\Gamma_Z}(\bP^1 \vertspace 0+\infty)\lvert_{\infty}]^{\vir}\,.&&\qed
	\end{flalign*}
\end{lem}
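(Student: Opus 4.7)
The strategy mirrors that of \Cref{prop: higher genus vanishing}: first turn the diagonal pullback into a product by exploiting the Cartesian squares in \eqref{eq: diagram restrict image to fibre}, then compare the two obstruction theories appearing on the $Z$-side and read off the excess as a Hodge class.

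\emph{Step 1 (diagonal into product).} Let $\xi$ denote the inclusion of the point $p \hookrightarrow F^{E(\Gamma)}$ appearing in the middle row of \eqref{eq: diagram restrict image to fibre}. Since all three squares in that diagram are Cartesian, and the product of two virtual classes is compatible with Gysin pullback \cite[Theorem 6.2(c)]{Fulton}, one obtains
\begin{equation*}
[\Mbar_\Gamma]^{\vir} \;=\; [\Mbar_{\Gamma_Y}(Y \vertspace D)\lvert_F]^{\vir} \;\times\; \xi^!\,[\Mbar_{\Gamma_Z}(Z \vertspace D_0+D_\infty)\lvert_{\cF_0}]^{\vir}.
\end{equation*}
Thus the whole problem reduces to identifying the second factor with $[\Mbar_{\Gamma_Z}(\bP^1 \vertspace 0+\infty)\lvert_\infty]^{\vir}$ up to a class of Hodge type.

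\emph{Step 2 (obstruction theory comparison).} Since by hypothesis every $Z$-vertex $v$ carries a curve class which is a multiple of a fibre class of $Z \to D$, any stable logarithmic map in $\Mbar_{\Gamma_Z}(Z \vertspace D_0+D_\infty)\lvert_{\cF_0}$ whose edge evaluations are constrained to $p$ by $\xi$ factors through the fibre $\bP^1 \hookrightarrow Z$ over $p$. The logarithmic divisors $D_0$ and $D_\infty$ restrict to $\{0\}$ and $\{\infty\}$ on this $\bP^1$, so the log-theoretic part of the obstruction theory matches on both sides, and the difference of the two perfect obstruction theories is controlled by $R\pi_\cstar f^\cstar N_{\bP^1/Z}$, where $\pi$ is the universal curve and $f$ the universal map. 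Because $\bP^1$ is a fibre of a smooth morphism, $N_{\bP^1/Z} \cong T_p D \otimes \cO_{\bP^1}$ is trivial of rank two. Consequently, as in the rubber case of \Cref{prop: higher genus vanishing}, the Euler class of the excess bundle is
\begin{equation*}
e(-R\pi_\cstar f^\cstar N_{\bP^1/Z}) \;=\; \uplambda_{\mr{top}}^{\,2},
\end{equation*}
where $\uplambda_{\mr{top}}$ denotes the top Chern class of the Hodge bundle on $\Mbar_{\Gamma_Z}(\bP^1 \vertspace 0+\infty)\lvert_\infty$. This yields
\begin{equation*}
\xi^!\,[\Mbar_{\Gamma_Z}(Z \vertspace D_0+D_\infty)\lvert_{\cF_0}]^{\vir} \;=\; \uplambda_{\mr{top}}^{\,2} \cap [\Mbar_{\Gamma_Z}(\bP^1 \vertspace 0+\infty)\lvert_\infty]^{\vir}.
\end{equation*}

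\emph{Step 3 (Mumford's relation).} Mumford's relation $c(\mbb{E}) c(\mbb{E}^\vee) = 1$ on $\Mbar_{g,n}$ implies $\uplambda_g^{\,2} = 0$ for every $g \geq 1$, and trivially $\uplambda_0^{\,2} = 1$. Decomposing the Hodge bundle on the disconnected domain according to the vertices of $\Gamma_Z$, we conclude that $\uplambda_{\mr{top}}^{\,2}$ vanishes as soon as some $Z$-vertex carries a positive genus label, whereas it equals $1$ if every $Z$-vertex has $g_v = 0$. Substituting back into the identity of Step 1 yields the two alternatives stated in the lemma.

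\emph{Main obstacle.} The delicate point is the obstruction theory comparison in Step 2. While the scheme-theoretic normal bundle contribution is elementary, one needs to verify that the logarithmic tangent complex of $(Z \vertspace D_0 + D_\infty)$ restricts to the logarithmic tangent complex of $(\bP^1 \vertspace 0+\infty)$ along the fibre, so that no extra excess comes from the marked points of the map hitting $D_0 \cup D_\infty$. Since $\bP^1$ meets $D_0$ and $D_\infty$ transversely at the single points $0$ and $\infty$, this reduces to a local statement at these two points and the log-theoretic contributions cancel cleanly, leaving only the purely algebraic $R\pi_\cstar f^\cstar N_{\bP^1/Z}$ as excess.
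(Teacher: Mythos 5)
Your proof is correct and follows essentially the same route as the paper, which for this lemma simply defers to the argument given for \Cref{prop: higher genus vanishing}: split the diagonal Gysin pullback into a product via \cite[Theorem 6.2 (c)]{Fulton} using the Cartesian squares of \eqref{eq: diagram restrict image to fibre}, identify the excess of the obstruction-theory comparison along the fibre $\bP^1$ over $p$ (whose normal bundle in $Z$ is trivial of rank two) as $\uplambda_{\mr{top}}^2$, and conclude with Mumford's relation. The only cosmetic caveat is that $e(-R\uppi_\cstar f^\cstar N_{\bP^1/Z})$ is not literally well defined because of the trivial $H^0$-summands, which must be (and are) absorbed by the normal bundle of $\xi$ and the difference between the $\lvert_{\cF_0}$ and $\lvert_\infty$ evaluation constraints --- a bookkeeping point the paper leaves equally implicit.
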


Let us summarise the results of this section.

\begin{prop}
	\label{prop: summary vanishing}
	The cycle $[\Mbar_\Gamma]^{\vir}$ vanishes unless the rigid tropical type $\uptau$ associated to $\Gamma$ is of the form
	\begin{equation*}
		\begin{tikzpicture}[baseline=(current  bounding  box.center),scale=0.85]
			\draw[fill=black] (-2,2) circle[radius=2pt];
			\draw (-2,2) node[below]{$v_0$};

			\draw (-2,2) -- (0,1);
			\draw[fill=black] (0,1) circle[radius=2pt];
			\draw[->] (0,1) -- (3,1);

			\draw (0.5,1.44) node{$\vdots$};

			\draw (-2,2) -- (0,1.75);
			\draw[fill=black] (0,1.75) circle[radius=2pt];
			\draw[->] (0,1.75) -- (3,1.75);

			\draw (-2,2) -- (0,2.25);
			\draw[fill=black] (0,2.25) circle[radius=2pt];
			\draw[->] (0,2.25) -- (3,2.25);
			\draw[->] (0,2.25) -- (0,2.6);

			\draw (0.5,2.69) node{$\vdots$};

			\draw (-2,2) -- (0,3);
			\draw[fill=black] (0,3) circle[radius=2pt];
			\draw[->] (0,3) -- (3,3);
			\draw[->] (0,3) -- (0,3.35);


			\draw[->] (0.5,0.75) -- (0.5,0.25);

			\draw (-2,0) -- (0,0);

			\draw[->] (0,0) -- (3,0);

			\draw[fill=black] (-2,0) circle[radius=2pt];
			\draw (-2,0) node[below]{$\upsigma_Y$};

			\draw[fill=black] (0,0) circle[radius=2pt];
			\draw (0,0) node[below]{$\upsigma_Z$};

			\draw [decorate,decoration={brace,amplitude=5pt,mirror}]
			(3.2,0.9) -- (3.2,1.85) node[midway,xshift=2em]{$J_{\mr{pt}}$-legs};
			\draw [decorate,decoration={brace,amplitude=5pt,mirror}]
			(3.2,2.15) -- (3.2,3.1) node[midway,xshift=2em]{$J_{\mr{\mathbbm{1}}}$-legs};
		\end{tikzpicture}
	\end{equation*}
	where $v_0$ is decorated with $g_{v_0}=g$, $\upbeta_{v_0}=\upbeta$ and all other vertices are decorated with multiples of fibre classes. Moreover, in this case
	\begin{flalign*}
		&&[\Mbar_{\Gamma}]^{\vir} = [\Mbar_{\Gamma_Y}(Y \vertspace D)\lvert_{F}]^{\vir} \times [\Mbar_{\Gamma_Z}(\bP^1 \vertspace 0+\infty)\lvert_{\infty}]^{\vir}\,. && \qed
	\end{flalign*}
\end{prop}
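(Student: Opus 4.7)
The strategy is to chain together Lemmas \ref{lem: Zvertices fibre class}, \ref{prop: unique Dinfinity leg}, \ref{prop: unique D0 legs} and \ref{prop: higher genus vanishing 2} with a short combinatorial bookkeeping argument that pins down the star shape of any surviving splitting type.

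First I would apply Lemma \ref{lem: Zvertices fibre class} to restrict to types $\Gamma$ all of whose $Z$-vertices are decorated by positive multiples of a fibre class of $p: P \to D_1$. This is the essential vanishing input and activates the fibre-class factorisation of the $D_0$-evaluations used in diagram \eqref{eq: diagram restrict image to fibre}.

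Next, Lemma \ref{prop: unique Dinfinity leg} forces each $Z$-vertex of a surviving type to carry either one $J_{\mathbbm{1}}$-leg paired with one $I_{\mr{pt}}$-leg or a single $J_{\mr{pt}}$-leg. Since $|I_{\mr{pt}}| = m = |J_{\mathbbm{1}}|$ and $|J_{\mr{pt}}| = n - m$, this leaves no freedom: all $m$ interior legs pair up with the $m$ legs in $J_{\mathbbm{1}}$, and each of the $n-m$ legs in $J_{\mr{pt}}$ sits on a $Z$-vertex of its own. In particular $|V(\Gamma_Z)| = n$, and no $I_{\mr{pt}}$-leg can be attached to a $Y$-vertex, since such a configuration makes $\Mbar_{\uptau}(\cY_0)\lvert_{\cF_0}$ empty by the discussion preceding the decomposition \eqref{eq: decomposition}.

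Now Lemma \ref{prop: unique D0 legs} forces each of the $n$ $Z$-vertices to have exactly one adjacent edge. Connectedness of the domain curves parametrised by $\Mbar_{g,\mathbf{\tilde{c}},\upbeta}(\cY_0)\lvert_{\cF_0}$ translates into connectedness of $\Gamma$, and since $\Gamma$ is bipartite with all $Z$-vertices pendant, every edge must be incident to a common $Y$-vertex $v_0$. This produces precisely the star shape pictured in the statement. Balancing of curve classes under the contraction $\cY_0 \to Y$ (the fibre-class components of the $Z$-vertices collapse to points) forces $\upbeta_{v_0} = \upbeta$, and the genus formula on the tree $\Gamma$ combined with the vanishing half of Lemma \ref{prop: higher genus vanishing 2} forces $g_v = 0$ for every $Z$-vertex and hence $g_{v_0} = g$.

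The remaining identity for $[\Mbar_\Gamma]^{\vir}$ on the surviving types is exactly the non-vanishing case of Lemma \ref{prop: higher genus vanishing 2}, together with the identification of $\Mbar_{\Gamma_Z}(Z \vertspace D_0 + D_\infty)\lvert_{\cF_0}$ with the moduli of logarithmic stable maps to the fibre $\bP^1$ over $p = F \cap D_1$ arising from the Cartesian squares in \eqref{eq: diagram restrict image to fibre}. The essential analytic work is carried by the four lemmas; the main remaining obstacle is the combinatorial synthesis above, whose delicate point is verifying that the leg-counting saturates exactly so that every $I_{\mr{pt}}$-leg lands on a $Z$-vertex carrying a $J_{\mathbbm{1}}$-leg.
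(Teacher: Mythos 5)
Your proposal is correct and matches the paper's own treatment: the proposition is stated there as a summary of the section, with the proof being exactly the chaining of Lemmas \ref{lem: Zvertices fibre class}, \ref{prop: unique Dinfinity leg}, \ref{prop: unique D0 legs} and \ref{prop: higher genus vanishing 2}, the remark before \eqref{eq: decomposition} that all legs sit on $Z$-vertices, and the same leg/edge counting plus connectedness argument forcing the star shape with $\upbeta_{v_0}=\upbeta$ and $g_{v_0}=g$. No gaps.
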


\subsection{Analysis of remaining terms}
Now our discussion will deviate from the analysis in \Cref{sec: composite locus} because as opposed to \Cref{sec: composite 3rd vanishing} we will show that all remaining contributions characterised in \Cref{prop: summary vanishing} do contribute to the Gromov--Witten invariant and will exactly yield the right-hand side of equation \eqref{eq: comparison point conditions}.

Let $\Gamma$ be as in \Cref{prop: summary vanishing}. We fix a labelling of the markings of $\Mbar_{\Gamma_Y}(Y \vertspace D)\lvert_{F}$ by fixing a bijection $\{1,\ldots,n\} \cong E(\Gamma)$ where we impose that the $i$th edge is adjacent to the $Z$-vertex carrying the $(m+i)$th leg. With this assignment we can naturally identify $\Gamma_Y$ with the data $(g,\mathbf{c},\upbeta)$.

Our first goal is to simplify
\begin{equation*}
	\uptheta_\cstar [\Mbar_\Gamma]^{\vir} = \uptheta_\cstar\big( [\Mbar_{g,\mathbf{c},\upbeta}(Y \vertspace D)\lvert_{F}]^{\vir} \times [\Mbar_{\Gamma_Z}(\bP^1 \vertspace 0+\infty)\lvert_{\infty}]^{\vir}\big)\,.
\end{equation*}
Under $\uptheta$ all images of stable maps which lie in a fibre of $P\rightarrow D_1$ get contracted to a point. Hence, the morphism factors into
\begin{equation*}
	\begin{tikzcd}
		\Mbar_{g,\mathbf{c},\upbeta}(Y \vertspace D)\lvert_{F}\, \times\,\Mbar_{\Gamma_Z}(\bP^1 \vertspace 0+\infty)\lvert_{\infty} \arrow[r, "\mr{Id}\times \uppi"] \arrow[rr, start anchor={[yshift=0.4em]south east},  end anchor={[yshift=0.4em]south west}, swap, bend right = 10, "\uptheta"] &
		\Mbar_{g,\mathbf{c},\upbeta}(Y \vertspace D)\lvert_{F} \,\times\, \big(\Mbar_{0,3}\big)^{m} \arrow[r, "\upphi"] &
		\Mbar_{g,m+n,\upbeta}(Y)
	\end{tikzcd}
\end{equation*}
where $\uppi$ is the forgetful morphism which only remembers the stabilised domain curve and $\upphi$ is a morphism which forgets the logarithmic structure and attaches a contracted rational component at the last $m$ markings.

Consequently, a $Z$-vertex carrying a leg labelled with $i+m \in J_{\mathbbm{1}}$ contributes with a factor
\begin{equation*}
	\deg [\Mbar_{0,((c_i,0),(0,c_i)),c_i}(\bP^1 \vertspace 0+\infty)\lvert_{\infty}]^{\vir} = \deg [\Mbar_{0,((c_i,0),(0,c_i)),c_i}(\bP^1 \vertspace 0+\infty)]^{\vir} = \frac{1}{c_i}
\end{equation*}
where the denominator is due to the degree $c_i$ automorphism group acting on the unique degree $c_i$ cover $\bP^1 \rightarrow \bP^1$ fully ramified over $0$ and infinity. Similarly, a $Z$-vertex with leg labelled by $i+m \in  J_{\mathrm{pt}}$ contributes
\begin{equation*}
	\deg [\Mbar_{0,(0,(c_i,0),(0,c_i)),c_i}(\bP^1 \vertspace 0+\infty)\lvert_{\infty}]^{\vir} = \deg \big( \mr{ev}_1^{\cstar}(\mr{pt})\cap [\Mbar_{0,(0,(c_i,0),(0,c_i)),c_i}(\bP^1 \vertspace 0+\infty)]^{\vir} \big) = 1
\end{equation*}
because the additional marking kills the automorphism group acting on the domain. We obtain
\begin{equation}
	\label{eq: contribution splitting type final}
	\uptheta_\cstar [\Mbar_\Gamma]^{\vir} = \left(\textstyle{\prod_{i=1}^{n-m}} \tfrac{1}{c_i}\right) \cdot \upphi_\cstar \!\left([\Mbar_{g, \mathbf{c} ,\upbeta}(Y \vertspace D)\lvert_{F}]^{\vir} \times [\Mbar_{0,3}]^{\times m}\right)\,.
\end{equation}

Note that a splitting type $\Gamma$ as in \Cref{prop: summary vanishing} is uniquely determined by the bijection $I_{\mr{pt}} \cong J_{\mathbbm{1}}$ it induces. This means there are exactly $m!$ such splitting types each one contributing with \eqref{eq: contribution splitting type final} to the overall Gromov--Witten invariant. Thus, plugging the last equation into \eqref{eq: degeneration result} we find
\begin{equation*}
	\upzeta_\cstar [\Mbar_{g, \mathbf{\tilde{c}} ,\upbeta}(Y \vertspace D)\lvert_{F}]^{\vir} = m! \left(\textstyle{\prod_{i=0}^{m-1}}c_{n-i}\right) \cdot \upphi_\cstar \!\left([\Mbar_{g, \mathbf{c} ,\upbeta}(Y \vertspace D)\lvert_{F}]^{\vir} \times [\Mbar_{0,3}]^{\times m}\right)\,.
\end{equation*}
Pushing the identity forward to a point yields formula \eqref{eq: comparison point conditions} which closes the proof of \Cref{thm: comparison point conditions}.

\section{Applications}
\label{sec: application}

\subsection{Topological Vertex}
\label{sec: Top Vert}
Open Gromov--Witten invariants can be computed efficiently using the topological vertex method \cite{AKMV05:TopVert,LLLZ09:MathTopVert}. Hence, formula \eqref{eq:log-open} equating logarithmic Gromov--Witten invariants --- which are usually rather cumbersome to compute --- against open ones presents a new venue for calculating the former.

\subsubsection{The general recipe}
We refer to \cite[Section~2]{Yu23:BPS} or \cite[Section~6]{BBvG2} for a general summary of the topological vertex method which allows to determine the open Gromov--Witten invariants of a toric triple just from the information of its toric diagram. Let $(X,L,\msf{f})$ be the outcome of \Cref{constr: toric triple} for some two-component Looijenga pair $(Y\vertspace D_1 + D_2)$. We will write
\begin{equation}
	\label{eq: disconnected open invariant}
	\cW_{\upmu}(X,L,\msf{f})(q,Q)
\end{equation}
for the generating series of disconnected open Gromov--Witten invariants of $(X,L,\msf{f})$ in representation basis where the winding around $L$ is encoded in a partition $\upmu$. In the special case of a single boundary wrapping the Lagrangian submanifold $L$ we can extract the connected invariant defined in \eqref{eq: defn open GW} via \cite[Equation~(6.16)]{BBvG2}\footnote{We corrected \cite[Equation~(6.16)]{BBvG2} by an overall factor $-\ri$. See \cite[Appendix~A]{Yu23:BPS} for a discussion of this factor.}
\begin{equation}
	\label{eq: disconnected to connected}
	\sum_{\upbeta'} \sum_{g\geq 0} \OGW{g,(c),\upbeta'}{X,L,\msf{f}} ~ \hbar^{2g-1}\, Q^{\upbeta'} = -\ri \sum_{k=0}^c \frac{(-1)^k}{c} \frac{\cW_{(c-k,1^k)}(X,L,\msf{f})}{\cW_{\emptyset}(X,L,\msf{f})}
\end{equation}
under the change of variables $q=\re^{\ri \hbar}$ where we fixed a square root $\ri$ of $-1$.

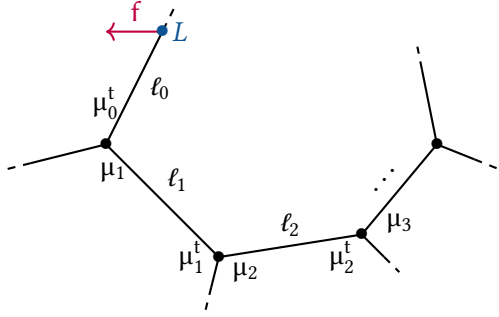
\begin{figure}
	\centering
	\begin{tikzpicture}[smooth, baseline={([yshift=-.5ex]current bounding box.center)}]%
		\draw[thick, dashed] (-0.1,-0.2) to (-0.2,-0.4);
		\draw[thick] (-0.2,-0.4) to (-1,-2);

		\draw[thick] (-1,-2) to (-2,-2.25);
		\draw[thick,dashed] (-2,-2.25) to (-2.4,-2.35);

		\draw[thick] (3.2,-1) to (3.4,-2);
		\draw[thick, dashed] (3.2,-1) to (3.14,-0.7);

		\draw[thick]  (3.4,-2) to (2.4,-3.2) to (0.5,-3.5) to (-1,-2);
		\draw[thick] (3.4,-2) to (3.9,-2.2);
		\draw[thick,dashed] (3.9,-2.2) to (4.2,-2.32);
		\draw[thick] (2.4,-3.2) to (2.7,-3.5);
		\draw[thick,dashed] (2.7,-3.5) to (2.9,-3.7);
		\draw[thick] (0.5,-3.5) to (0.4,-3.9);
		\draw[thick,dashed] (0.4,-3.9) to (0.325,-4.2);
		\node at (3.4,-2) {$\bullet$};
		\node at (2.4,-3.2) {$\bullet$};
		\node at (0.5,-3.5) {$\bullet$};
		\node at (-1,-2) {$\bullet$};
		\node[right] at (-0.55,-1.25) {$\ell_0$};
		\node[above] at (-0.05,-2.75) {$\ell_1$};
		\node[above] at (1.45,-3.35) {$\ell_2$};
		\node[above, rotate=50] at (2.9,-2.6) {$\cdots$};
		\node[above] at (-1,-1.8) {$\upmu_0^{\rt}$};
		\node[below] at (-0.9,-2.1) {$\upmu_1$};
		\node[left] at (0.45,-3.5) {$\upmu_1^{\rt}$};
		\node[right] at (0.55,-3.7) {$\upmu_2$};
		\node[left] at (2.45,-3.5) {$\upmu_2^{\rt}$};
		\node[right] at (2.6,-3.05) {$\upmu_3$};
		\node[above,purple] at (-0.6,-0.5) {$\msf{f}$};
		\draw[->,purple, thick] (-0.25,-0.5) to (-1,-0.5);
		\node[color=highlight] at (-0.25,-0.5) {$\bullet$};
		\node[right,highlight] at (-0.25,-0.5) {$L$};
	\end{tikzpicture}%
	\caption{Toric diagram of $(X,L,\msf{f})$ decorated with partitions.}
	\label{fig: tor skel}
\end{figure}

The generating series $\cW_{\upmu}(X,L,\msf{f})$ can be calculated algorithmically via the topological vertex method by performing a weighted sum over decorations of the toric diagram \cite{AKMV05:TopVert,LLLZ09:MathTopVert}. As we discussed in \Cref{sec: open geometric setup}, for our case at hand the toric diagram of $(X,L,\msf{f})$ will have a shape as displayed in \Cref{fig: tor skel}. We label the compact one dimensional torus orbit closures by $\ell_1,\ldots,\ell_k$ starting from the stratum $\ell_0$ intersected by $L$ and decorate each edge $\ell_i$ with a partition $\upmu_i$ as indicated in the figure. Then every edge $\ell_i$, $i\in\{0,\ldots,k\}$, contributes with a factor
\begin{equation*}
	(-1)^{(\deg N_{\ell_i}\defS) \cdot |\upmu_i|} q^{(\deg N_{\ell_i}\defS + 1)\cdot \upkappa_{\upmu_i}/2}
\end{equation*}
to \eqref{eq: disconnected open invariant} where $\upkappa_{\upnu}\coloneqq \sum_j \upnu_j (\upnu_j - 2j +1)$. Each trivalent vertex corresponding to a torus fixed point $\ell_i\cap \ell_{i+1}$ contributes
\begin{equation}
	\label{eq: two leg vertex formula}
	\cW_{\upmu_{i+1},\upmu_{i}^{\rt},\emptyset} \coloneqq s_{\upmu_{i+1}}\!\big(q^{\uprho+\upmu_{i}}\big) \, s_{\upmu_{i}^{\rt}}\!\big(q^{\uprho}\big) = q^{-\upkappa_{\upmu_{i}}/2} s_{\upmu_{i+1}}\!\big(q^{\uprho}\big) s_{\upmu_{i}}\!\big(q^{\uprho+\upmu_{i+1}}\big) = q^{\upkappa_{\upmu_{i+1}}/2} \sum_{\upnu} s_{\tfrac{\upmu_{i}^{\rt}}{\upnu}}\!\big(q^{\uprho}\big) \, s_{\tfrac{\upmu_{i+1}^{\rt}}{\upnu} }\!\big(q^{\uprho}\big)
\end{equation}
where by convention $\upmu_0=\upmu$ and $\upmu_{k+1}=\emptyset$ and $s_{\upalpha/\upbeta} \big(q^{\uprho+\upgamma}\big)$ denotes the skew Schur function $s_{\upalpha/\upbeta} \big(x_1,x_2,\ldots\big)$ evaluated at $x_j = q^{-j + \frac{1}{2} +\upgamma_j}$. As an application of \cite[Proposition~7.4]{LLLZ09:MathTopVert} we obtain
\begin{lem}
	\label{lem: outcome top vert}
	For $(X,L,\msf{f})$ the outcome of \Cref{constr: toric triple} we have
	\begin{equation*}
		\cW_{\upmu}(X,L,\msf{f})(q,Q) = \sum_{\upmu_1,\ldots,\upmu_k} Q^{\sum_{i=1}^k |\upmu_i| \cdot [\ell_i]} ~ \prod_{i=0}^k (-1)^{(\deg N_{\ell_i}\defS) \cdot |\upmu_i|} q^{(\deg N_{\ell_i}\defS + 1) \cdot \upkappa_{\upmu_i}/2} \cW_{\upmu_{i+1},\upmu_{i}^{\rt},\emptyset}\,.
	\end{equation*}
\end{lem}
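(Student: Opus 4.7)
The statement is essentially a direct application of the gluing formula \cite[Proposition~7.4]{LLLZ09:MathTopVert} specialised to the toric threefold $X$ produced by \Cref{constr: toric triple}, so my plan is to unpack that specialisation carefully.

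First I would identify the toric skeleton of $X=Y\setminus D$. Since $Y=\Tot\upomega_{\defS}(\defDone)$, the compact one-dimensional torus orbit closures of $Y$ are exactly the toric curves of $\defS$ embedded via the zero section. Deleting $D=\uppi^{-1}(\defDone)$ breaks the cycle of toric curves in $\defS$, leaving the chain $\ell_0,\ldots,\ell_k$ of the remaining boundary components. At each fixed point $\ell_i\cap\ell_{i+1}$ there is precisely one further, non-compact, edge of the toric diagram — the fibre of $Y\to\defS$ over that point — so the toric diagram is the one drawn in \Cref{fig: tor skel} with $L$ attached to $\ell_0$ in framing $\msf{f}$.

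Next I would quote \cite[Proposition~7.4]{LLLZ09:MathTopVert}: the open partition function $\cW_\upmu(X,L,\msf{f})$ is a sum over partitions $\upmu_1,\ldots,\upmu_k$ assigned to the internal edges, weighted by a product of one three-leg topological vertex $C_{\bullet,\bullet,\bullet}(q)$ at each torus fixed point and one framing/Kähler factor at each edge. In our chain geometry every trivalent vertex has precisely one non-compact outward edge, which carries the empty partition; therefore each vertex factor collapses to the two-leg object $\cW_{\upmu_{i+1},\upmu_i^{\rt},\emptyset}$ appearing in the statement, and the monomial $Q^{\sum_i|\upmu_i|[\ell_i]}$ records the Kähler weights of the internal edges.

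The remaining work is to evaluate the edge contributions. Since $X$ is Calabi--Yau and each $\ell_i$ is rational, the normal bundle $N_{\ell_i}Y$ splits as $N_{\ell_i}\defS \oplus \upomega_{\defS}(\defDone)|_{\ell_i}$, and the two degrees sum to $-2$ by the Calabi--Yau condition. Plugging the resulting local framing integer $f_i=\deg N_{\ell_i}\defS+1$ into the standard edge factor of the topological vertex reproduces precisely $(-1)^{(\deg N_{\ell_i}\defS)|\upmu_i|}\, q^{(\deg N_{\ell_i}\defS+1)\upkappa_{\upmu_i}/2}$. The $i=0$ term in the product plays a dual role: together with the partition $\upmu$ on the outer edge it encodes the framing $\msf{f}$ of the brane $L$.

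The main obstacle I expect is bookkeeping, not substance: the sign and framing conventions differ between \cite{LLLZ09:MathTopVert}, \cite{FL13:OpenGWtorCY3} and \cite{BBvG2} (compare the footnote after \eqref{eq: defn framing factor}). In particular one must check that the framing $\msf{f}=c_1^{\gpfont{T}}(F)$ fixed in \Cref{constr: toric triple} agrees with the framing that enters the outer leg of the topological vertex without an extra shift, using the identification $f=\deg\cO_{\defS}(-\defDone)|_{\ell_0} = -\deg N_{\ell_0} Y - 1$ from \eqref{eq: defn framing factor} and \Cref{sec: open geometric setup}. Once this calibration is made, every factor in the general formula is accounted for and the lemma follows.
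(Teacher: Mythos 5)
Your proposal matches the paper's argument: the paper likewise reads off the chain-shaped toric diagram of $X$ from \Cref{sec: open geometric setup}, records the edge factor $(-1)^{(\deg N_{\ell_i}\defS)|\upmu_i|}q^{(\deg N_{\ell_i}\defS+1)\upkappa_{\upmu_i}/2}$ and the two-leg vertex $\cW_{\upmu_{i+1},\upmu_i^{\rt},\emptyset}$ (with $\upmu_0=\upmu$, $\upmu_{k+1}=\emptyset$), and obtains the formula as a direct application of \cite[Proposition~7.4]{LLLZ09:MathTopVert}. Your additional remarks on the splitting of $N_{\ell_i}Y$, the Calabi--Yau degree constraint, and the framing calibration via $f=-\deg N_{\ell_0}Y-1$ only flesh out details the paper leaves implicit.
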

In case the toric triple is a strip geometry the above sum of partitions can be carried out explicitly to yield a closed form solution for $\cW_{\upmu}(X,L,\msf{f})$ \cite{IKP06:VertexOnStrip}. Plugged into \eqref{eq: disconnected to connected} and \eqref{eq:log-open} this can be used to give a closed form solution for the logarithmic Gromov--Witten invariants of $(S\vertspace D_1+D_2)$. We illustrate this method with an explicit example in the following section.

\subsubsection{Extended example: \texorpdfstring{$\mr{dP}_3(0,2)$}{dP3(0,2)}}
\label{sec:dP3 0 2}
Let us consider $(\mr{dP}_3 \vertspace D_1+D_2)$ where $\mr{dP}_3$ is a del Pezzo surface which is the result of blowing up three points in $\bP^2$. This Looijenga pair was denoted by $\mr{dP}_3(0,2)$ in \cite{BBvG2}. If we write $E_1,E_2,E_3$ for exceptional divisors we take $D_1\in |H-E_1|$ and $D_2 \in |2H-E_2-E_3|$ smooth with transverse intersections. We use the following notation for a curve class in $\mr{dP}_3$:
\begin{equation*}
	\upbeta = d_0 (H-E_1-E_2-E_3) + d_1 E_1 +d_2 E_2 + d_3 E_3\,.
\end{equation*}
It was conjectured in \cite{BBvG2} and later proven in \cite{BS23:Quasitame} that the maximum contact logarithmic Gromov--Witten invariants of this geometry have the following closed form solution.

\begin{prop}
	\label{thm: dP3 0 2}
	\cite[Proposition~1.4]{BS23:Quasitame} For all effective curve classes $\upbeta$ in $\mr{dP}_3$ with $D_i \cdot \upbeta >0$, $i\in\{1,2\}$, and contact data $\mathbf{\hat{c}} = \begin{psmallmatrix} 0 & D_1 \cdot \upbeta & 0 \\ 0 & 0 & D_2 \cdot \upbeta \end{psmallmatrix}$ we have
	\begin{equation}
		\label{eq: Nlog dP3 0 2}
	\begin{split}
		&\sum_{g\geq 0} \hbar^{2g-1} ~ \LGW{g,\mathbf{\hat{c}},\upbeta}{\mr{dP}_3 \vertspace D_1+D_2}{(-1)^g \uplambda_g \,\mr{ev}_1(\mr{pt})} \\
		& \hspace{3em}= \frac{[d_1]_q [d_2+d_3]_q}{[d_0]_q [d_1 + d_2 + d_3 - d_0]_q} \qbinom{d_1}{d_0-d_3}_q \qbinom{d_1}{d_0-d_2}_q \qbinom{d_0}{d_1}_q \qbinom{d_1+d_2+d_3-d_0}{d_1}_q
	\end{split}
	\end{equation}
	as formal Laurent series in $\hbar$ under the identification $q=\re^{\ri \hbar}$.
\end{prop}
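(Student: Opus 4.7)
The plan is to verify that $(\mr{dP}_3 \vertspace D_1+D_2)$ satisfies \Cref{assumption: star} and then to feed the pair into the machinery of \Cref{thm: log open} combined with the topological vertex recipe of \Cref{sec: Top Vert}. The inequalities $D_i \cdot \upbeta > 0$ are built into the contact datum, $D_2 \cdot D_2 = (2H-E_2-E_3)^2 = 2 \geq 0$, and $(\mr{dP}_3, D_1)$ is deformation equivalent to the toric pair $(\mr{Bl}_2 \bF_1, D_1)$ obtained by realising $D_1 = H-E_1$ as a torus-invariant fibre of the ruling $\bF_1 \to \bP^1$ and blowing up two additional torus-fixed points lying off $D_1$.

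Applying \Cref{constr: toric triple} to this toric model produces a toric triple $(X,L,\msf{f})$ whose toric diagram one can read off directly: deleting the two edges of the toric diagram of $Y = \Tot \upomega_{\mr{Bl}_2 \bF_1}(D_1)$ covering $D_1$ leaves a strip geometry with the outer brane $L$ attached at one end. \Cref{thm: log open} then rewrites the generating series on the left-hand side of \eqref{eq: Nlog dP3 0 2} as the open generating series $\sum_{g \geq 0} \hbar^{2g-2} \OGW{g,(d_1),\upbeta'}{X,L,\msf{f}}$, up to explicit sign and sine factors involving $d_1$ and $d_2+d_3$.

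I would then compute these open invariants via the topological vertex. By \Cref{lem: outcome top vert}, the disconnected series $\cW_\upmu(X,L,\msf{f})$ is a multi-sum over partitions of products of (skew) Schur functions evaluated at principal specialisations $q^{\uprho+\ldots}$. Since $X$ is a strip, the Iqbal--Kashani-Poor closed-form evaluation of the vertex on strip geometries \cite{IKP06:VertexOnStrip} collapses this multi-sum into a finite product of $q$-shifted factorials. The connected one-holed invariant is then extracted via the linear combination in \eqref{eq: disconnected to connected}.

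The main obstacle is the final combinatorial identification of this topological-vertex output with the product of $q$-binomials on the right-hand side of \eqref{eq: Nlog dP3 0 2}. I expect this to follow from standard $q$-series manipulations --- considerably less intricate than the $q$-hypergeometric identities of \cite{Kra21:qBinomProof,BS23:Quasitame} used in the original proof --- since the strip formula already delivers the answer in a partially factorised form compatible with \eqref{eq: Nlog dP3 0 2}. Careful bookkeeping of framings and orientations from \Cref{constr: toric triple}, together with the signs and normalisations of \eqref{eq:log-open}, will be essential to match the prefactors.
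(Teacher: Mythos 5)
Your proposal follows essentially the same route as the paper: verify \Cref{assumption: star} via a toric degeneration of $(\mr{dP}_3\vertspace D_1)$ (your $\mr{Bl}_2\bF_1$ model agrees with the paper's $\mr{dP}_3'$), apply \Cref{constr: toric triple} and \Cref{thm: log open}, and evaluate the resulting strip geometry with the topological vertex via \Cref{lem: outcome top vert}, \eqref{eq: disconnected to connected} and the closed form of \cite{IKP06:VertexOnStrip}. The final $q$-series simplification you flag as the remaining obstacle is exactly the computation already carried out in \cite[Equations~(6.18--6.23)]{BBvG2}, which the paper simply imports.
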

In this \namecref{thm: dP3 0 2} we use the notation
\begin{equation*}
	[n]_q \coloneqq q^{n/2}-q^{-n/2} \,, \quad
	\qbinom{n}{m}_q \coloneqq \begin{cases}
		\prod_{k=1}^{m}\frac{[n-m+k]_q}{[k]_q} & 0\leq m \leq n, \\
		0 & \text{otherwise}.
	\end{cases}
\end{equation*}
The statement was proven in \cite{BS23:Quasitame} by extracting the Gromov--Witten invariants from a quantum scattering diagram of a toric model of $(\mr{dP}_3 \vertspace D_1+D_2)$. The outcome of this calculation is a convoluted sum of products of $q$-binomial coefficients which with a lot of effort can be simplified to yield the above formula. We will give a new proof of this identity using the topological vertex method.

\begin{proof}[Proof of \Cref{thm: dP3 0 2}]
We start by applying \Cref{constr: toric triple} to $(\mr{dP}_3 \vertspace D_1+D_2)$. For this let us write $p_1,p_2,p_3$ for the three points in $\bP^2$ whose blow up gives $\mr{dP}_3$. Then $D_1$ is the strict transform of a line through $p_1$ and $D_2$ the strict transform of a conic through $p_2$ and $p_3$. We note that we can always choose the $\Gm^2$-action on $\bP^2$ in such a way that $D_1$ is a toric hypersurface and $p_1$ and $p_2$ are torus fixed points. Now with this choice of $\Gm^2$-action the blowups at $p_1$ and $p_2$ are toric as opposed to the one at $p_3$ since all points are assumed to be in general position. However, there is a deformation of $\mr{dP}_3$ to the blow up of $\bP^2$ in $p_1,p_2$ and a torus fixed point of $E_2$ leaving $D_1$ invariant. We denote this deformation by $\mr{dP}^{\prime}_3$ which is indeed toric as required. We display its fan in \Cref{fig: fan skel dP3} where we labelled each ray with its curve class.
\begin{figure}%
	\centering
	\begin{tikzpicture}[baseline=(current  bounding  box.center), scale=1.5]
		\draw[->,highlight] (0,-1.35) to (0,0);
		\node[left, color=highlight] at (0,-0.35) {$D_1=H-E_1$};
		\draw[->] (0,-1.35) to (1.35,-1.35);
		\node[above] at (1,-1.35) {$E_1$};
		\draw[->] (0,-1.35) to (-1.35,-1.35);
		\node[above] at (-1,-1.35) {$H-E_2-E_3$};
		\draw[->] (0,-1.35) to (-1.35,-2.7);
		\node[left] at (-0.7,-2.05) {$E_3$};
		\draw[->] (0,-1.35) to (0,-2.7);
		\node[right] at (-0.05,-2.35) {$E_2-E_3$};
		\draw[->] (0,-1.35) to (1.35,-2.7);
		\node[right] at (0.7,-2.05) {$H-E_1-E_2$};
		\draw (4,-0.3) to (4,-1) to (4.7,-1.7) to (5.7,-1.7) to (6.4,-1) to (6.4,-0.5);
		\node[right] at (4,-0.7) {$\ell_0$};
		\node[right] at (4.30,-1.25) {$\ell_1$};
		\node[above] at (5.2,-1.7) {$\ell_2$};
		\node[left] at (6.1,-1.25) {$\ell_3$};
		\draw[dashed] (4,-0.3) to (4,0);
		\draw[dashed] (6.4,-0.5) to (6.4,0);
		\draw (4,-1) to (3.5,-1);
		\draw[dashed] (3.5,-1) to (3,-1);
		\draw (4.7,-1.7) to (4.7,-2.2);
		\draw[dashed] (4.7,-2.2) to (4.7,-2.7);
		\draw (5.7,-1.7) to (5.7,-2.2);
		\draw[dashed] (5.7,-2.2) to (5.7,-2.7);
		\draw (6.4,-1) to (6.9,-1);
		\draw[dashed] (6.9,-1) to (7.4,-1);
		\draw[->,purple] (4,-0.3) to (3.5,-0.3);
		\node[color=purple, above] at (3.75,-0.3) {$\msf{f}$};
		\node[color=highlight] at (4,-0.3) {$\bullet$};
		\node[color=highlight, right] at (4,-0.3) {$L$};
	\end{tikzpicture}
	\caption{The fan of $\mr{dP}^{\prime}_3$ on the left and the toric diagram of $(X,L,\msf{f})$ on the right \cite[Figure~6.5]{BBvG2}.}
	\label{fig: fan skel dP3}
\end{figure}%
Since $\mr{dP}^{\prime}_3$ is toric with $D_1$ a toric hypersurface, we can choose
\begin{equation*}
	X = \Tot \cO_{\mr{dP}^{\prime}_3} (-D_2) \lvert_{\mr{dP}^{\prime}_3 \setminus D_1}\,.
\end{equation*}
The toric diagram of $(X,L,\msf{f})$ is displayed in \Cref{fig: fan skel dP3} where we chose $L$ to intersect the torus invariant curve in class $H-E_2-E_3$. We note that this particular toric triple has already been studied by Bousseau, Brini and van Garrel in \cite[Section~6.3.1]{BBvG2}. Hence, from here on we basically follow their calculation.

Observe that each compact edge in the toric diagram corresponds to a ray in the fan. Hence, we can read off the degree of the normal bundles of $\ell_0,\ldots,\ell_3$ from \Cref{fig: fan skel dP3} to be $-1$, $-1$, $-2$ and $-1$ respectively. Thus, by \Cref{lem: outcome top vert} we have
\begin{align*}
	\cW_{\upmu}(X,L,\msf{f}) & = \sum_{\upmu_1,\upmu_2,\upmu_3} (-1)^{|\upmu|} \cW_{\upmu_1,\upmu^{\rt},\emptyset} \,\cdot\, (-1)^{|\upmu_1|} \cW_{\upmu_2,\upmu_1^{\rt},\emptyset} \,\cdot\, q^{-\upkappa_{\upmu_2}/2} \cW_{\upmu_3,\upmu_2^{\rt},\emptyset} \,\cdot\, (-1)^{|\upmu_3|} \cW_{\emptyset,\upmu_3^{\rt},\emptyset} ~ Q^{\sum_{i=1}^3 |\upmu_i| \cdot [\ell_i]}\\
	& = \sum_{\substack{\upmu_1,\upmu_2,\upmu_3 \\ \upnu_1,\upnu_2}} (-1)^{|\upmu|+|\upmu_1|+|\upmu_3|} ~ s_{\upmu^{\rt}}\big(q^{\uprho}\big) \, s_{\upmu_1^{\rt}}\!\big(q^{\uprho+\upmu}\big) \,  s_{\tfrac{\upmu_1}{\upnu_1}}\!\big(q^{\uprho}\big) \,  s_{\tfrac{\upmu_2}{\upnu_1}}\!\big(q^{\uprho}\big) \,  s_{\tfrac{\upmu_2}{\upnu_2}}\!\big(q^{\uprho}\big) \,  s_{\tfrac{\upmu_3}{\upnu_2}}\!\big(q^{\uprho}\big) \, s_{\upmu_3^{\rt}}\!\big(q^{\uprho}\big) ~ Q^{\sum_{i=1}^3 |\upmu_i| \cdot [\ell_i]}
\end{align*}
where to get the second line we used \eqref{eq: two leg vertex formula}. This can now be plugged into \eqref{eq: disconnected to connected} and simplified using the techniques developed in \cite{IKP06:VertexOnStrip}. These steps have already been carried out in \cite[Equation~(6.18--6.23)]{BBvG2} and result in
\begin{equation*}
	\begin{split}
		&\sum_{\upbeta'} \sum_{g\geq 0} \OGW{g,(c),\upbeta'}{X,L,\msf{f}} ~ \hbar^{2g-1}\, Q^{\upbeta'}\\
		&\hspace{3em}= \sum_{c_1,c_2,c_3} \ri (-1)^{c+c_1+c_3+1} \frac{1}{c [c_1]_q} \qbinom{c}{c_1 -c_2}_q \qbinom{c}{c_2 -c_3}_q \qbinom{c+c_3-1}{c_3}_q \qbinom{c_1}{c}_q ~ Q^{\sum_{i=1}^3 c_i [\ell_i]}\,.
	\end{split}
\end{equation*}
Extracting the coefficient of $Q^{\iota^{\cstar}(\upbeta - d_1 [\ell_0])}$ from the above formula we get
\begin{equation*}
	\begin{split}
		&\sum_{g\geq 0} \hbar^{2g-1} ~ \OGW{g,(d_1),\iota^{\cstar}(\upbeta - d_1 [\ell_0])}{X,L,\msf{f}}\\
		&\hspace{3em} =  \frac{\ri (-1)^{d_1+d_2+d_3+1} [d_1]_q}{d_1 [d_0]_q [d_1 + d_2 + d_3 - d_0]_q} \qbinom{d_1}{d_0-d_3}_q \qbinom{d_1}{d_0-d_2}_q \qbinom{d_0}{d_1}_q \qbinom{d_1+d_2+d_3-d_0}{d_1}_q
	\end{split}
\end{equation*}
which can be plugged into \eqref{eq:log-open} to yield the formula stated in \Cref{thm: dP3 0 2}.
\end{proof}

We stress that the above calculation is substantially easier than the one in \cite{BS23:Quasitame} using scattering diagrams. This hopefully illustrates the usefulness of our main result \Cref{thm: log open}.

\subsection{BPS integrality}
\label{sec: BPS integrality}
It is a general feature rather than a coincidence that the all-genus generating series of logarithmic Gromov--Witten invariants lifts to a rational function in $\re^{\ri \hbar}$ as we saw for instance in formula \eqref{eq: Nlog dP3 0 2}. This property is usually called BPS integrality.

\subsubsection{LMOV invariants}
In the context of open Gromov--Witten invariants of toric Calabi--Yau threefolds BPS integrality was first systematically studied by Labastida--Marino--Ooguri--Vafa \cite{OV00:KnotInvTopStr, LM01:PolyInvTorKnotTopStr, LMV00:KnotsLinksBranes, MV02:FramedKnotsLargeN}. Just recently Yu gave a proof of this phenomenon by carefully analysing the combinatorics of the topological vertex \cite{Yu23:BPS}.

To review Yu's result, let $(X,L,\msf{f})$ be a toric triple. For a curve class $\upbeta^{\prime}$ in $X$ and a winding profile $\mathbf{c}=(c_1,\ldots,c_n)$ along $L$ we will write
\begin{equation*}
	k  \vertspace  (\mathbf{c},\upbeta^{\prime})
\end{equation*}
for $k\in\bN$ if $k$ divides $c_i$ for all $i\in\{1,\ldots,n\}$ and there is a curve class $\upbeta^{\prime\prime}$ satisfying $k\upbeta^{\prime\prime}=\upbeta^{\prime}$.

Now for every tuple $(\mathbf{c},\upbeta^{\prime})$ we define
\begin{equation*}
	\invariantfont{LMOV}_{\mathbf{c},\upbeta^{\prime}}(\hbar) \in \bQ\llbracket \hbar^2 \rrbracket
\end{equation*}
by demanding that these formal power series satisfy
\begin{equation*}
	\sum_{g\geq 0} \hbar^{2g-2+n} ~  \OGW{g,\mathbf{c},\upbeta^{\prime}}{X,L,\msf{f}} \eqqcolon \prod_{i=1}^n \frac{2 \sin \tfrac{\hbar c_i}{2}}{c_i} ~ \sum_{k|(\mathbf{c},\upbeta^{\prime})} k^{n-1} \left(2 \sin \tfrac{\hbar k}{2}\right)^{-2}~\invariantfont{LMOV}_{\mathbf{c}/k,\upbeta^{\prime}/k}(k\hbar)\,.
\end{equation*}
Writing $\upmu$ for the M\"obius function one can check that
\begin{equation*}
	\invariantfont{LMOV}_{\mathbf{c},\upbeta^{\prime}}(\hbar) =  \left(2 \sin \tfrac{\hbar}{2}\right)^2 \prod_{i=1}^n \frac{c_i}{2 \sin \tfrac{\hbar c_i}{2}} ~ \sum_{k | (\mathbf{c},\upbeta^{\prime})} \frac{\upmu(k)}{k} \sum_{g\geq 0} (k\hbar)^{2g-2+n} ~  \OGW{g,\mathbf{c}/k,\upbeta^{\prime}/k}{X,L,\msf{f}}
\end{equation*}
is indeed a solution to the above defining equation.

\begin{thm}[LMOV integrality]
	\label{thm: LMOV integrality}
	\cite[Theorem 1.1]{Yu23:BPS} $\invariantfont{LMOV}_{\mathbf{c},\upbeta^{\prime}}$ lifts to a Laurent polynomial in $q=\re^{\ri \hbar}$ with integer coefficients.
\end{thm}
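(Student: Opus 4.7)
My plan is to prove integrality directly from the topological vertex formula, bypassing the geometric definition. By \Cref{lem: outcome top vert} (and its generalisation to arbitrary toric triples), the disconnected open generating series $\cW_{\upmu}(X,L,\msf{f})(q,Q)$ is a sum over partitions $\upmu_1,\ldots,\upmu_k$ of products of skew Schur functions $s_{\upalpha/\upbeta}(q^{\uprho})$ at the principal specialisation, multiplied by explicit powers of $q$ and signs. The first step is to repackage this sum using the well-known identity expressing such sums of products of skew Schurs in closed form (along the lines of Iqbal--Kashani-Poor for strip geometries, or in general by the fermionic/operator formalism of Okounkov--Reshetikhin). This exhibits $\cW_{\upmu}(X,L,\msf{f})$ as a rational function in $q^{1/2}$ and $Q$ whose only poles are at roots of unity coming from denominators $(1-q^a Q^{\upalpha})$.

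The second step is to relate the connected generating series $\sum_g \hbar^{2g-2+n}\,\OGW{g,\mathbf{c},\upbeta'}{X,L,\msf{f}}$ to $\cW_{\upmu}$ via the standard Frobenius-type change of basis (a generalisation of \eqref{eq: disconnected to connected} to arbitrary $n$) together with the exponential/plethystic relation between connected and disconnected invariants. Applying M\"obius inversion in the definition of $\invariantfont{LMOV}_{\mathbf{c},\upbeta'}$, one can then express it as an alternating sum of $\log \cW$-type quantities, which after the cancellations encoded by the Adams operations $\psi^k$ acquires the prefactor $\prod_i c_i/[c_i]_q$ times $(2\sin\tfrac{\hbar}{2})^2$. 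The combinatorial upshot is a concrete formula for $\invariantfont{LMOV}_{\mathbf{c},\upbeta'}$ as a finite $\bZ$-linear combination of products of evaluations $s_{\upalpha/\upbeta}(q^{\uprho})$, divided by cyclotomic factors.

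The main obstacle is then the integrality of this combinatorial expression: one must show that all the cyclotomic denominators actually cancel, leaving a genuine Laurent polynomial in $q$ with integer coefficients. My plan here is to reduce to a statement about the plethystic structure of the coefficients, in the spirit of the HOMFLY-PT integrality for knots, and to verify it via the quantum-dimension identities
\begin{equation*}
	s_{\upalpha}(q^{\uprho}) = \prod_{(i,j)\in\upalpha} \frac{1}{q^{h(i,j)/2}-q^{-h(i,j)/2}} \cdot q^{\text{(content)}}\,,
\end{equation*}
together with the hook content formula for skew shapes. The key lemma, which is the heart of the argument, is that for each fixed degree $(\mathbf{c},\upbeta')$ the cyclotomic poles arising from different partitions $\upmu_i$ cancel in the M\"obius-inverted sum; this is a purely combinatorial identity which can be established either by a direct valuation argument at each primitive root of unity or, more elegantly, by re-expressing the generating series as a trace in the infinite wedge and invoking the integrality of the resulting matrix elements. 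Once this cancellation is established, integrality of the remaining coefficients is automatic from the integrality of the hook-content weights. Finiteness (Laurent polynomiality rather than just a formal series in $\hbar$) follows from the fact that $\OGW{g,\mathbf{c},\upbeta'}{X,L,\msf{f}}$ vanishes for $g$ large by dimensional reasons in the localisation formula \eqref{eq: open GW to psi insertion}.
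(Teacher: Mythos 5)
First, note that the paper itself does not prove this statement: it is imported wholesale as \cite[Theorem 1.1]{Yu23:BPS}, so there is no internal proof to compare against. Your sketch is broadly aligned with the strategy of Yu's actual argument (a careful combinatorial analysis of the topological vertex formula), but as a proof it has two genuine gaps.

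The first and decisive gap is that your ``key lemma'' --- the cancellation of all cyclotomic poles in the M\"obius-inverted expression for $\invariantfont{LMOV}_{\mathbf{c},\upbeta^{\prime}}$ --- \emph{is} the theorem. Everything before it (repackaging $\cW_{\upmu}$ via skew Schur identities, the Frobenius/plethystic change of basis, M\"obius inversion) is standard bookkeeping that reduces the statement to this cancellation; asserting that it ``can be established either by a direct valuation argument at each primitive root of unity or \ldots by re-expressing the generating series as a trace in the infinite wedge'' does not establish it. Neither route is routine: the infinite-wedge matrix elements one obtains are not obviously integral after dividing by the $\prod_i [c_i]_q$ and $\left(2\sin\tfrac{\hbar k}{2}\right)^{-2}$ factors, and controlling valuations at every primitive root of unity simultaneously across all partitions $\upmu_i$ contributing to a fixed degree is precisely the hard combinatorial content of Yu's paper. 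The second gap is the final finiteness claim: it is false that $\OGW{g,\mathbf{c},\upbeta^{\prime}}{X,L,\msf{f}}$ vanishes for large $g$ ``by dimensional reasons'' --- the moduli problem has virtual dimension zero for \emph{every} $g$, and the invariants are generically nonzero in all genera. The generating series in $\hbar$ is an honest infinite series; Laurent polynomiality in $q=\re^{\ri\hbar}$ is a resummation statement (compare $\tfrac{1}{2}\csc\tfrac{\hbar}{2}$, whose $\hbar$-expansion is infinite but which is a rational, not polynomial, function of $q$), and it must be extracted from the closed-form vertex expression together with the pole cancellation, not from vanishing of coefficients.
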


\subsubsection{BPS integrality of logarithmic invariants}
Now let $(S\vertspace D_1+D_2)$ be a logarithmic Calabi--Yau surface with boundary the union of two transversally intersecting smooth curves $D_1,D_2$. To a tuple $(\mathbf{\hat{c}},\upbeta)$, where $\upbeta$ is an effective curve class in $S$ satisfying $D_1 \cdot \upbeta \geq 0$ and $D_2 \cdot \upbeta > 0$ and $\mathbf{\hat{c}}$ is a contact condition of the form
\begin{equation*}
	\mathbf{\hat{c}} = \bigg(\raisebox{-0.9em}{\begin{minipage}{13.2em}$\hspace{0.2em}\begin{matrix}
				0 & \cdots & 0 & c_1 & \cdots & c_n & 0\\
				0 & \mathclap{\underbrace{\makebox[4.5em]{$\cdots$}}_{\text{$m$ times}}} & 0 & 0 & \cdots & 0 & D_2 \cdot \upbeta
			\end{matrix}$\end{minipage}}\bigg)
\end{equation*}
for some $c_1,\ldots,c_n >0$ and $m\leq n$, we associate a formal power series
\begin{equation*}
	\invariantfont{BPS}_{\mathbf{\hat{c}},\upbeta}(\hbar) \in \bQ\llbracket \hbar^2 \rrbracket
\end{equation*}
solving the recursive system
\begin{equation}
	\label{eq: LGW to BPS}
	\begin{split}
		& (-1)^{K_S \cdot \upbeta +1 }\sum_{g\geq 0} \hbar^{2g-1+n} ~ \LGW{g,\mathbf{\hat{c}},\upbeta}{S \vertspace D_1+D_2}{(-1)^g \uplambda_g \,\Pi_{i=1}^n \mr{ev}_i(\mr{pt})} \\
		&\hspace{3em}\eqqcolon\bigg(\prod_{i=1}^{n-m} \frac{2 \sin \tfrac{\hbar c_i}{2}}{c_i} \bigg) \bigg(\frac{1}{m!}\prod_{i=n-m+1}^{n} 2 \sin \tfrac{\hbar c_i}{2} \bigg) \bigg(2 \sin \tfrac{\hbar (D_2 \cdot \upbeta)}{2}\bigg) \sum_{k | (\mathbf{\hat{c}},\upbeta)} k^{n-1} \,  \left(2 \sin \tfrac{\hbar k}{2}\right)^{-2} ~ \invariantfont{BPS}_{\mathbf{\hat{c}}/k,\upbeta/k} (k \hbar) \,.
	\end{split}
\end{equation}
Again, one can check that
\begin{equation*}
	\begin{split}
		\invariantfont{BPS}_{\mathbf{\hat{c}},\upbeta} &= \left(2 \sin \tfrac{\hbar}{2}\right)^2 \bigg(\prod_{i=1}^{n-m} \frac{c_i}{2 \sin \tfrac{\hbar c_i}{2}} \bigg) \bigg(\frac{1}{m!}\prod_{i=n-m+1}^{n} \frac{1}{2 \sin \tfrac{\hbar c_i}{2}} \bigg) \frac{1}{2 \sin \tfrac{\hbar (D_2 \cdot \upbeta)}{2}} \\[0.3em]
		& \hspace{3em}\times\sum_{k | (\mathbf{\hat{c}},\upbeta)} \upmu(k) \, k^{m-1} \, (-1)^{K_S \cdot \upbeta /k +1} \sum_{g\geq 0} (k\hbar)^{2g-1+n} ~ \LGW{g,\mathbf{\hat{c}}/k,\upbeta/k}{S \vertspace D_1+D_2}{(-1)^g \uplambda_g \,\Pi_{i=1}^n \mr{ev}_i(\mr{pt})}
	\end{split}
\end{equation*}
is indeed a solution to this defining equation. Motivated by the last section we expect the following.

\begin{conj}
	\label{conj: BPS integrality}
	$\invariantfont{BPS}_{\hat{c},\upbeta}$ lifts to a Laurent polynomial in $q=\re^{\ri \hbar}$ with integer coefficients.
\end{conj}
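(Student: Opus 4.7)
The plan is to deduce the conjecture — under Assumption $\star$, giving \Cref{thm: intro BPS integrality} — by matching the defining recursion for $\invariantfont{BPS}_{\mathbf{\hat{c}},\upbeta}$ against the defining recursion for $\invariantfont{LMOV}_{\mathbf{c},\upbeta'}$ through the log-open correspondence \eqref{eq:log-open}, and then invoking Yu's \Cref{thm: LMOV integrality}. The fully unconditional conjecture would follow once the correspondence is extended to arbitrary two-component Looijenga pairs.

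The first step is a direct substitution. Multiplying \eqref{eq:log-open} by $\hbar^{n}$ and using $K_S=-D_1-D_2$, so that the sign $(-1)^{K_S\cdot\upbeta+1}$ appearing in the defining identity \eqref{eq: LGW to BPS} absorbs the two sign factors on the right-hand side of \eqref{eq:log-open}, the left-hand side of \eqref{eq: LGW to BPS} becomes a rational multiple of $\sum_{g\geq 0}\hbar^{2g-2+n}\,\OGW{g,\mathbf{c},\upbeta'}{X,L,\msf{f}}$. A careful bookkeeping of the prefactors $m!\prod_{i=0}^{m-1}c_{n-i}$, of the $2\sin(\hbar c_i/2)$ versus $c_i$ factors, and of the cancellation of the common $2\sin\tfrac{(D_2\cdot\upbeta)\hbar}{2}$ term reduces the matter to the identity
\begin{equation*}
\sum_{k\,|\,(\mathbf{c},\upbeta')} k^{n-1}\big(2\sin\tfrac{\hbar k}{2}\big)^{-2}\,\invariantfont{LMOV}_{\mathbf{c}/k,\upbeta'/k}(k\hbar) \;=\; \tfrac{1}{(m!)^{2}}\sum_{k\,|\,(\mathbf{\hat{c}},\upbeta)} k^{n-1}\big(2\sin\tfrac{\hbar k}{2}\big)^{-2}\,\invariantfont{BPS}_{\mathbf{\hat{c}}/k,\upbeta/k}(k\hbar).
\end{equation*}
One next verifies that the two divisibility posets $\{k\,|\,(\mathbf{c},\upbeta')\}$ and $\{k\,|\,(\mathbf{\hat{c}},\upbeta)\}$ coincide, using $k\,|\,\upbeta\Rightarrow k\,|\,D_i\cdot\upbeta$ together with the formula $\upbeta'=\upiota^{\cstar}(\upbeta-(D_1\cdot\upbeta)[\ell])$ from \Cref{constr: toric triple}. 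A M\"obius inversion along this common poset then yields $\invariantfont{BPS}_{\mathbf{\hat{c}},\upbeta}(\hbar)=(m!)^{2}\,\invariantfont{LMOV}_{\mathbf{c},\upbeta'}(\hbar)$, and \Cref{thm: LMOV integrality} concludes that $\invariantfont{BPS}_{\mathbf{\hat{c}},\upbeta}$ is a Laurent polynomial in $q=\re^{\ri\hbar}$ with integer coefficients.

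The hard part — and what keeps \Cref{conj: BPS integrality} open in full generality — is that this argument rests on \Cref{thm: log open}, which itself relies on both \Cref{thm: GNS main theorem} and the toric-deformability clause of \Cref{assumption: star}. Removing $D_2\cdot D_2\geq 0$ breaks properness of $\Mbar_{g,\mathbf{\tilde{c}},\upbeta}(Y\vertspace D)$ in the reduction of \cite{GNS23:bicyclic}, while removing the toric deformability of $(S\vertspace D_1)$ eliminates the target toric triple altogether, so that the open side of the correspondence is no longer defined on the nose. An unconditional proof would therefore require either a generalisation of \Cref{thm: GNS main theorem} beyond the nef regime — perhaps through a virtual push-forward to a log Calabi--Yau threefold in which non-toric Aganagic--Vafa-type branes can be assigned a sensible enumerative meaning, in the spirit of Liu--Yu's open/closed duality — or a fundamentally different route, for instance extracting BPS integrality directly from Bousseau's quantum scattering diagrams via a refined tropical correspondence theorem combined with a combinatorial argument in the vein of Yu's proof of \Cref{thm: LMOV integrality}.
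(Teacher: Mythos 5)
Your argument is correct and is exactly the route the paper takes: the partial result \Cref{thm: BPS integrality} is obtained by feeding \eqref{eq:log-open} into the defining recursions for $\invariantfont{BPS}$ and $\invariantfont{LMOV}$ and quoting \Cref{thm: LMOV integrality}; the paper records no further details, so your computation fills in precisely the omitted bookkeeping, and your assessment of why the conjecture remains open beyond \Cref{assumption: star} matches the paper's.

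One point of divergence is worth flagging. Your tally of prefactors --- which is correct given the formulas as printed --- yields $\invariantfont{BPS}_{\mathbf{\hat{c}},\upbeta}=(m!)^{2}\,\invariantfont{LMOV}_{\mathbf{c},\upbeta'}$, whereas \Cref{thm: BPS integrality} asserts equality on the nose. The discrepancy arises because the factor $\tfrac{1}{m!}$ in the normalisation \eqref{eq: LGW to BPS} compounds with the $\tfrac{1}{m!}$ in \eqref{eq:log-open} to produce $\tfrac{1}{(m!)^{2}}$ on the logarithmic side against a clean $\prod_{i}\tfrac{2\sin(\hbar c_i/2)}{c_i}$ on the open side. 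It disappears for $m\leq 1$ (the setting of \Cref{conj: BBvG log-open}) and, being the square of an integer, is harmless for the integrality claim; but either the normalisation in \eqref{eq: LGW to BPS} or the ``especially'' clause of \Cref{thm: BPS integrality} should be adjusted by $(m!)^{2}$. Two smaller steps you assert rather than prove: (i) the identification of the divisibility posets needs the converse implication $k\mid(\mathbf{c},\upbeta')\Rightarrow k\mid\upbeta$, which is not automatic since $\upiota^{\cstar}$ forgets classes supported on $D$; and (ii) the M\"obius inversion uses the matched recursion at every divided level $(\mathbf{\hat{c}}/k,\upbeta/k)$, so one should note that $\upbeta/k$ again satisfies \Cref{assumption: star} and that $(\upbeta/k)'=\upbeta'/k$, so that \eqref{eq:log-open} applies at each such level.
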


Evidence for the \namecref{conj: BPS integrality} comes from the following immediate corollary of Yu's LMOV integrality (\Cref{thm: LMOV integrality}) and our logarithmic-open correspondence (\Cref{thm: log open}).

\begin{thm}
	\label{thm: BPS integrality}
	\Cref{conj: BPS integrality} holds if $(S \vertspace D_1 + D_2)$ satisfies \Cref{assumption: star}. Especially, if $(X,L,\msf{f})$ is the toric triple from \Cref{constr: toric triple} we have
	\begin{flalign*}
		&& \invariantfont{BPS}_{\mathbf{\hat{c}},\upbeta} = \invariantfont{LMOV}_{\mathbf{c},\upbeta^{\prime}}\,. && \qed
	\end{flalign*}
\end{thm}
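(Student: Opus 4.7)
The strategy is to match the two Möbius inversions. Both $\invariantfont{BPS}_{\mathbf{\hat{c}},\upbeta}$ and $\invariantfont{LMOV}_{\mathbf{c},\upbeta^{\prime}}$ are defined through recursive Möbius-type systems whose closed-form solutions are already displayed in the excerpt. The proof therefore reduces to substituting the logarithmic-open correspondence (\Cref{thm: log open}) into the explicit closed form for $\invariantfont{BPS}_{\mathbf{\hat{c}},\upbeta}$ and recognising the result as the closed form for $\invariantfont{LMOV}_{\mathbf{c},\upbeta^{\prime}}$. Once equality is established, the integrality statement follows immediately from \Cref{thm: LMOV integrality}.

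The substitution must be performed at each divisibility level $k$: applying \Cref{thm: log open} with $(\mathbf{\hat{c}},\upbeta)$ replaced by $(\mathbf{\hat{c}}/k,\upbeta/k)$ and $\hbar$ replaced by $k\hbar$ converts the logarithmic generating series entering the $\invariantfont{BPS}$ formula into the corresponding open generating series, producing a prefactor of the shape $(-1)^{(D_1+D_2)\cdot\upbeta/k+1}\, k^m \, / \,\bigl(m!\prod_{i=0}^{m-1}c_{n-i}\cdot 2\sin\tfrac{(D_2\cdot\upbeta)\hbar}{2}\bigr)$. The sign cancels against the $(-1)^{K_S\cdot\upbeta/k+1}$ already present in the $\invariantfont{BPS}$ formula, since $K_S = -(D_1+D_2)$; the reciprocal sine cancels against the sine in the $\invariantfont{BPS}$ prefactor; the factors $m!$ and $\prod_{i=0}^{m-1}c_{n-i}$ match those in the $\invariantfont{BPS}$ prefactor; and the remaining $k^m$, combined with the $k^n$ coming from $(k\hbar)^n$, promotes the $k^{m-1}$ weight of the $\invariantfont{BPS}$ Möbius sum to the $k^{n-1}$ weight demanded by the $\invariantfont{LMOV}$ formula.

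A preliminary technical point is the identification of the two divisibility lattices indexing the Möbius sums. If $k \vertspace (\mathbf{\hat{c}},\upbeta)$, then $k$ divides $D_1\cdot\upbeta = \sum_i c_i$, hence $k$ divides $\upbeta - (D_1\cdot\upbeta)[\ell]$ in $Y$, so $\upbeta^{\prime} = \upiota^{\cstar}(\upbeta - (D_1\cdot\upbeta)[\ell])$ is divisible by $k$ in $X$; conversely, since $\upiota^{\cstar}$ identifies curve classes in $Y$ not supported on $D$ with curve classes in $X$, the divisibility of $\upbeta^{\prime}$ lifts to a divisibility of $\upbeta$. The two lattices therefore coincide, and the sums in the $\invariantfont{BPS}$ and $\invariantfont{LMOV}$ closed forms run over the same index set.

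The main obstacle is the careful bookkeeping of these combinatorial prefactors; beyond that, the argument is a formal algebraic manipulation driven entirely by \Cref{thm: log open}. Assembling the above observations yields $\invariantfont{BPS}_{\mathbf{\hat{c}},\upbeta} = \invariantfont{LMOV}_{\mathbf{c},\upbeta^{\prime}}$, and Laurent polynomiality in $q=\re^{\ri\hbar}$ with integer coefficients is then inherited from \Cref{thm: LMOV integrality}.
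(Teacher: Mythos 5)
Your proposal is correct and follows exactly the route the paper intends: the paper states this theorem as an immediate corollary of \Cref{thm: log open} and \Cref{thm: LMOV integrality} with no written proof, and your substitution of the correspondence into the closed-form Möbius expressions, with the sign $(-1)^{(D_1+D_2)\cdot\upbeta/k+1}=(-1)^{K_S\cdot\upbeta/k+1}$, the $k$-independent sine $2\sin\tfrac{(D_2\cdot\upbeta)\hbar}{2}$, the $m!\prod_{i=0}^{m-1}c_{n-i}$ factors, and the power $k^m$ all cancelling as you describe, is precisely the bookkeeping being left implicit. Your attention to the identification of the two divisibility lattices is a point the paper does not even address, so no gap relative to the paper's own argument.
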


Note that the pattern in which the sine factors enter the right-hand side of equation \eqref{eq: LGW to BPS} naturally suggests a generalisation to the case where we permit more than one point of contact with $D_2$. It remains to be seen whether BPS integrality persists in this case as well.

\renewcommand*{\bibfont}{\footnotesize}
\printbibliography\medskip

\end{document}